\newcommand{\bq}{\mathbb{Q}}
\newcommand{\qp}{\mathbb{Q}_p}
\newcommand{\zp}{\mathbb{Z}_p}
\newcommand{\zetap}{\zeta_{p-1}}
\newcommand{\zetapn}{\zeta_{p^n}}
\newcommand{\ap}{a_p}
\newcommand{\calR}{\mathcal{R}}
\newcommand{\sT}{\mathcal{T}}
\newcommand{\norm}[1]{\vert #1 \vert}
\newcommand{\til}[1]{\widetilde{#1}}
\newcommand{\R}{\mathcal{R}}
\newcommand{\Dr}{\mathbf{D}_{\mathrm{rig}}}
\newcommand{\br}[1]{\overline{#1}}
\newcommand{\U}{U_r}
\newcommand{\gm}{\mathfrak{m}}
\newcommand{\gn}{\mathfrak{n}}
\newcommand{\co}{\mathcal{O}}
\newcommand{\RU}{\calR_{\co(\U)}}
\newcommand{\sL}{\mathcal{L}}
\newcommand{\brqp}{\br{\mathbb{Q}}_p}
\newcommand{\tU}{\til{U}_r}
\newcommand{\un}[1]{\underline{#1}}
\newtheorem{theorem}{Theorem}[section]
\newtheorem{definition}[theorem]{Definition}
\newtheorem{Corollary}[theorem]{Corollary}
\newtheorem{conjecture}[theorem]{Conjecture}
\newtheorem{Remark}[theorem]{Remark}
\newtheorem{lemma}[theorem]{Lemma}
\newtheorem{Proposition}[theorem]{Proposition}
\title{Semi-stable representations as limits of crystalline representations}
\author{Anand Chitrao}
\author{Eknath Ghate} 
\affil{School of Mathematics, Tata Institute of Fundamental Research, \\ Homi Bhabha Road, Mumbai 400005, India}
\author{Seidai Yasuda}
\affil{Department of Mathematics, Hokkaido University, \\  Kita 10, Nishi 8, Kita-Ku, Sapporo, Hokkaido, 060-0810, Japan}
\date{}
\begin{document}
\maketitle
\vspace{-.9cm}
\begin{abstract}  
  We construct an explicit sequence $V_{k_n,a_n}$ of crystalline representations of exceptional weights
  converging to a given
  irreducible two-dimensional semi-stable representation $V_{k,\sL}$ of $\mathrm{Gal}(\brqp/\qp)$.
  The convergence takes place in the blow-up space of two-dimensional
  trianguline representations studied by Colmez and Chenevier.
  The process
  of blow-up is described in detail in the rigid analytic setting and 
  may be of independent interest.  %More generally,
  Also, we recover a formula of Stevens
  expressing the $\sL$-invariant as a logarithmic derivative.

  Our result can be used to compute the reduction of $V_{k,\sL}$ in terms of the reductions of the $V_{k_n,a_n}$.
  %In particular, this provides an alternative
  %approach to computing the reductions of irreducible two-dimensional semi-stable representations
  %that circumvents the somewhat technical machinery of integral $p$-adic Hodge theory.
  For instance, using the zig-zag conjecture we recover (resp. extend) the work of Breuil-M\'ezard and Guerberoff-Park computing
  the reductions of the $V_{k,\sL}$ for weights at most $p-1$ (resp. $p+1$), at least on
  the inertia subgroup. In the cases where zig-zag is known, we are further able to obtain some
  new information
  about the reductions for small odd weights.
  Finally, we explain some apparent violations to local constancy
  in the weight of the reductions of crystalline representations of small weight.
  % which were noticed in \cite{Gha}. 
\end{abstract}

{\renewcommand\thefootnote{}\footnotetext{{\it Keywords:} Galois representations, $(\varphi, \Gamma)$-modules, $\sL$-invariants, rigid geometry, blow-ups}}
{\renewcommand\thefootnote{}\footnotetext{{\it 2020 Mathematics Subject Classification:} 11F80, 14G22}}

%\subjclass[2010]{Primary: 11F80} %, Secondary: 11F80, 11F30}
%\keywords{Reductions of crystalline Galois representations,  mod $p$ Local Langlands Correspondence.}

\vspace{-.7cm}
\tableofcontents

\section{Introduction}
Let $p$ be an odd prime. Let $E$ be a finite extension of $\qp$ containing $\sqrt{p}$.
Let $D_{\mathrm{st}}$ be Fontaine's functor inducing an equivalence of categories
between semi-stable representations of the Galois group $\mathrm{Gal}(\br{{\mathbb Q}}_p/\qp)$ over $E$ and admissible filtered
$(\varphi, N)$-modules over $E$. We introduce two kinds of representations using this functor.

For every integer $k \geq 2$ and $\ap \in E$ of positive valuation, there is an irreducible two-dimensional {\it crystalline} representation $V_{k, \ap}$ over $E$ of %the Galois group
$\mathrm{Gal}(\br{{\mathbb Q}}_p/\qp)$ with Hodge-Tate weights $(0, k-1)$ and $D_\mathrm{st}(V_{k,a_p}^*) = D_{k,a_p}$, where $D_{k,a_p}  = E e_1 \oplus E e_2$ is the filtered $\varphi$-module defined by:
\begin{eqnarray*}
\begin{cases}
   \varphi(e_1) =  p^{k-1} e_2  \\  
              \varphi(e_2) = - e_1 + a_p e_2, 
\end{cases}
& \text{and } &
\mathrm{Fil}^i{D_{k, a_p}} =  \begin{cases} 
                                    D_{k,a_p},   &   \text{ if } i \leq 0 \\
                                    E e_1,         &   \text{ if } 1 \leq i \leq k-1 \\
                                    0,               &   \text{ if } i \geq k.
                                         \end{cases}
\end{eqnarray*}
% $\mathrm{Fil}^i(D_{\mathrm{st}}(V^*_{k, a_p})) = E e_1$ for $1 \leq i \leq k - 1$, where $e_1$ and $e_2$ are basis vectors over $E$ of the two-dimensional filtered $\varphi$-module $D_{\mathrm{st}}(V^*_{k, a_p})$, and such that $\varphi e_1   =p^{k-1}e_2$, $\varphi e_2 = -e_1 + a_p e_2$ (so that that the characteristic polynomial of $\varphi$ is $x^2 - a_px + p^{k - 1}$).
Similarly, for every integer $k \geq 2$ and $\sL \in \mathbb{P}^1(E)$ (called the $\sL$-invariant), there is a two-dimensional {\it semi-stable} representation $V_{k, \sL}$ over $E$ of %the Galois group
$\mathrm{Gal}(\br{{\mathbb Q}}_p/\qp)$ with Hodge-Tate weights $(0, k-1)$ and
$D_\mathrm{st}(V_{k,\sL}^*) = D_{k,\sL}$, where $D_{k,\sL}  = E e_1 \oplus E e_2$ is the filtered $(\varphi,N)$-module defined by:
\begin{eqnarray*}
   \begin{cases} \varphi(e_1) =  p^{k/2} e_1  \\  
                 \varphi(e_2) = p^{(k-2)/2} e_2, 
   \end{cases}
& \text{and } &
\mathrm{Fil}^i{D_{k, \sL}} =  \begin{cases} 
                                    D_{k, \sL},   &   \text{ if } i \leq 0 \\
                                    E (e_1 + \sL e_2),         &   \text{ if } 1 \leq i \leq k-1  \text{ and } \sL \neq \infty\\
                                    E (e_1+e_2), & \text{ if } 1 \leq i \leq k-1 \text{ and } \sL = \infty \\
                                    0,               &   \text{ if } i \geq k,
                                         \end{cases}
\end{eqnarray*}
and
\begin{eqnarray*}
  \begin{cases}
    N(e_1) =   e_2 \\
    N(e_2) =  0,
  \end{cases} \text{ if } \sL \neq \infty, \quad \text{ and } \quad
   N = 0 \> \text{ if } \sL  = \infty.
\end{eqnarray*}
% $\mathrm{Fil}^i(D_{\mathrm{st}}(V^*_{k, \sL})) = E(e_1 + \sL e_2)$ for $1 \leq i \leq k - 1$ if $\sL \not= \infty$ and $\mathrm{Fil}^i(D_{\mathrm{st}}(V^*_{k, \sL})) = E(e_1+e_2)$ if $\sL = \infty$, where $e_1$ and $e_2$ are basis vectors over $E$ of the two-dimensional filtered $(\varphi, N)$-module $D_{\mathrm{st}}(V^*_{k, \sL})$, and such that $\varphi e_1 = p^{k/2} e_1$, $\varphi e_2 = p^{(k-2)/2} e_2$, and $N e_1 = e_2$, $N e_2 = 0$ if $\sL \neq \infty$ and $N = 0$
%Note that when $\sL = \infty$, the semi-stable representation $V_{k, \sL}$ is crystalline (and is isomorphic to
%the representation $V_{k, a_p}$ with $a_p = p^{k/2} + p^{(k-2)/2}$).
If $k \geq 3$, then the semi-stable  representation $V_{k, \sL}$ is irreducible and when $\sL = \infty$ is isomorphic to
the representation $V_{k, a_p}$ with $a_p = p^{k/2} + p^{(k-2)/2}$.

This paper studies several relationships between the crystalline representations $V_{k,a_p}$ and the semi-stable representations $V_{k, \sL}$ for $k \geq 3$.

In particular, we show how information about the reductions of the former representations
implies information about the reductions of the latter. In general, computing the reductions of Galois representations
has applications to computing deformation rings, to the weight part of Serre's conjecture, to the Breuil-M\'ezard conjecture
and to modularity lifting theorems.

\subsection{Notation}
%
%We collect some notation used in this paper.
\begin{itemize}
  \item $p$ is an odd prime and $\sqrt{p}$ is a fixed square root of $p$
  \item $E$ is a $p$-adic number field, i.e., a finite extension of $\qp$ 
  \item $v_p$ is the $p$-adic valuation normalized such that $v_p(p) = 1$ 
  \item $\zetap$ is a fixed primitive $(p - 1)^{\mathrm{th}}$ root of unity in $\qp^*$
  \item $\log$ is the $p$-adic logarithm, normalized by setting $\log(p) = 0$
  %\item $D_\mathrm{st}$ is Fontaine's (covariant) functor inducing an equivalence of categories between semi-stable representations of the absolute Galois group $\mathrm{Gal}(\br{{\mathbb Q}}_p/\qp)$ of $\qp$ over $E$ and admissible filtered $(\varphi, N)$-modules over $E$. Also let $\Dr$ be the functor that sets up an equivalence of categories between the category of $E$-linear representations of $\mathrm{Gal}(\br{{\mathbb Q}}_p/\qp)$ and the category of $(\varphi, \Gamma)$-modules over $\calR_E$ of slope $0$, where $\calR_E$ is the Robba ring with coefficients in $E$.
  \item $\sT$ is the rigid analytic space parameterizing continuous characters of $\qp^*$
  \item $x^i \in \sT(\qp)$ for $i \geq 0$ is the character $\qp^* \to \qp^*$ that sends an element to its $i^{\mathrm{th}}$ power
  \item $\chi \in \sT(\qp)$ is the $p$-adic cyclotomic character  $\qp^* \to \qp^*$  which maps $p$ to $1$ and which is the identity on $\zp^*$
  \item Characters of the form $x^i\chi$ for $i \geq 0$ are called exceptional characters
  \item $\mu_{\lambda}$ is the character of $\qp^*$ sending $p$ to $\lambda \in \br{\mathbb{F}}_p^*$ or $\brqp^*$ and $\zp^*$ to $1$
  \item Normalize the map $\qp^* \rightarrow  \mathrm{Gal}(\brqp/\qp)^{\mathrm{ab}}$ of class field theory by sending $p$ to a geometric Frobenius.
        We sometimes think of $\chi$ and $\mu_{\lambda}$ as characters of $\mathrm{Gal}(\br{{\mathbb Q}}_p/\qp)$. % at least if the $p$-adic norm of $\lambda$ is $1$.
  \item Let $\Gamma = \mathrm{Gal}(\qp(\mu_{p^\infty})/\qp)$. We also think of $\chi$ as a character $\Gamma \xrightarrow{\sim} \zp^*$.
        From Section~\ref{section G}, we fix a topological generator $\gamma$ of $\Gamma$ such that $\chi(\gamma) = \zeta_{p-1}^a  (1+ p)$ for a fixed integer $a$.    
  \item $I_{\qp}$ is the inertia subgroup of $\mathrm{Gal}(\brqp/\qp)$  
  \item $\omega$ is the fundamental character of $I_{\qp}$ of level $1$;
        % These are inertial characters, i.e., characters of the inertia subgroup $I_{\qp}$ of $\mathrm{Gal}(\br{{\mathbb Q}}_p/\qp)$.
        it has a canonical extension to $\mathrm{Gal}(\br{{\mathbb Q}}_p/\qp)$
        % by setting $\omega(\sigma) = \dfrac{\sigma(-p)^{\frac{1}{p - 1}}}{(-p)^\frac{1}{p - 1}} \mod p^{\frac{1}{p - 1}}$. Let $\bq_{p^2}$ denote the unramified extension of dimension $2$ of $\qp$.
  \item $\omega_2$ is the fundamental character of $I_{\qp}$ of level $2$;
        choose an extension of $\omega_2$ to $\mathrm{Gal}(\brqp/\bq_{p^2})$ so that for an integer $c$ with $p + 1 \nmid c$ the representation
        $\mathrm{ind}(\omega_2^c)$ obtained
        by inducing this extension from $\mathrm{Gal}(\brqp/\bq_{p^2})$ to $\mathrm{Gal}(\brqp/\qp)$ has determinant $\omega^c$
      \item $\calR_A$ is the Robba ring with coefficients in $A$, for $A$ %a $p$-adic number field $E$, or more generally,
        an affinoid algebra
  \item $\calR_A(\delta)$ for $\delta \in \sT(A)$ is the $(\varphi, \Gamma)$-module of rank $1$ over $\calR_A$ with action of $\varphi$ and $\Gamma$:
        \[
          \varphi f(T) = \delta(p)f((1 + T)^p - 1) \quad \text{ and } \quad  \gamma f(T) = \delta(\chi(\gamma)) f((1 + T)^{\chi(\gamma)} - 1) \text{ for } \gamma \in \Gamma.
        \]
       %  \item $\psi$ be the usual left inverse of $\varphi$.
      \item $k$ is an integer greater than or equal to $2$ and $r = k - 2$ 
      \item $v_{-}$ and $v_{+}$ are the largest and smallest non-negative integers, respectively, such that $v_{-} < (k - 2)/2 < v_{+}$ for $k \in [3, p + 1]$
      \item $H_0 = 0$ and $H_{l} = \sum_{i = 1}^{l}\frac{1}{i}$ is the $l^{\mathrm{th}}$ partial harmonic sum for $l \geq 1$;  write $H_{\pm} = H_{v_{\pm}}$ % and $H_{+} = H_{v_+}$
      \item $\nu = v_p (\sL - H_- - H_+)$
        is the $p$-adic valuation of $\sL$  in a finite extension of $\qp$ shifted by the partial harmonic sums $H_-$ and $H_+$.
        Note $\nu$ equals $v_p (\sL)$ if either quantity is negative.
  \item  $\mathbb{P}(V)$ is the projectivization of a vector space $V$
  \item $\phi$ is Euler's totient function
  \item $\phi_n(T)$ for $n \geq 1$ is the $p^{n}$-th cyclotomic polynomial.
\end{itemize}

\subsection{%A prescribed semi-stable representation as a %
  Limits of crystalline representations}

Colmez \cite{Col} and Chenevier \cite{Che} have constructed a moduli space of non-split trianguline
$(\varphi, \Gamma)$-modules of rank $2$ over the Robba ring (assuming that the
quotient of the characters occurring in the triangulation is not of a certain kind). The first goal
of this paper is to construct for $k \geq 3$ an explicit sequence of crystalline representations converging in this space to the (dual of the) semi-stable representation $V_{k, \sL}$ for a prescribed $\sL$-invariant $\sL$. We will use this in conjunction with a local constancy result %in  \cite{Che}
to study the reductions of semi-stable representations. 
%we show that a conjecture made in \cite{Gha} describing the reductions of crystalline representations of exceptional weight can be used to recover the work of Breuil-M\'ezard \cite{BM} %\cite[Theorem 1.2]{BM}
%and Guerberoff-Park \cite{GP} % \cite[Theorem 5.0.5]{GP}
%on the computation of the reduction of $V_{k,\sL}$ for weights $k$ in $[3, p + 1]$, at least completely on inertia.

In order to state our result, let us recall the definition of the rigid-analytic space constructed by Colmez and Chenevier. Let $\sT$ be the parameter space for characters of $\qp^*$. Let $x : \qp^* \to \qp^*$ be the identity character. Let $\chi : \qp^* \to \zp^*$ is the $p$-adic cyclotomic character, sending $p$ to $1$
and such that $\chi |_{\zp^*}$ is the identity character. We call the characters $x^i\chi$ %and $x^{-i}$
for $i \geq 0$ {\it exceptional}.
For each $i \geq 0$, let $F_i$ and $F_i'$ be the closed analytic subvarieties of $\sT \times \sT$ such that for every finite extension $E$ of $\qp$, we have
\[\arraycolsep=2pt
    \begin{array}{rl}
        F_i(E) \! & = \{(\delta_1, \delta_2) \in \sT(E) \times \sT(E) \text{ } \vert \text{ } \delta_1\delta_2^{-1} = x^i\chi\}, \\
        F_i'(E) \! & = \{(\delta_1, \delta_2) \in \sT(E)\times\sT(E) \text{ } \vert \text{ } \delta_1\delta_2^{-1} = x^{-i}\}.
    \end{array}
  \]
  Let $F = \cup_{i \geq 0} \> F_i$ and $F' = \cup_{i \geq 0} \> F_i'$.
The Colmez-Chenevier space $\til{\sT}_2$ is the blow-up of $(\sT\times\sT) \setminus F'$ along $F$ in the category of rigid-analytic spaces.
Our first main theorem is the following:

\begin{theorem}\label{any l invariant}
Let $k \geq 3$, $r = k-2$ and $\sL \in \mathbb{P}^1(E)$. For $n \geq 1$, let
\begin{equation}
   \label{kn an}
    (k_n, a_n) =
    \begin{cases}
        (k + p^n(p - 1), \> p^{r/2}(1 + \sL p^n (p - 1)/2)), & \text{if $\sL \not = \infty$} \\
        (k + p^{n^2}(p-1), \> p^{r/2}(1 + p^n)), & \text{if $\sL = \infty$}.
      \end{cases}
\end{equation} %Our first theorem is the following:
  % If $\sL \in \br{\mathbb{Q}}_p$  is finite,
Then $$V^*_{k_n,a_n} \to V^*_{k, \sL},$$
  i.e., the sequence of points in $\til{\sT}_2$ associated to the crystalline representations $V^*_{k_n,a_n}$ 
  % $V^*_{k + p^n(p - 1), p^{r/2}(1 + p^n\sL(p - 1)/2)}${\color{blue}, for $n \geq 1$}
  converges to the point in $\til{\sT}_2$ associated to the semi-stable representation $V^*_{k, \sL}$.
  %If $\sL = \infty$, then the sequence of crystalline representations
  %$V^*_{k, p^{r/2}(1 + p^n)}$ converges to the (crystalline) % semi-stable
  %representation $V^*_{k, \sL}$ in $\til{\sT}_2$.
\end{theorem}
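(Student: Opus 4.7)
The strategy is in three stages: (i) write down the trianguline parameters of both sides; (ii) verify convergence in the base $\sT \times \sT$ to a point of $F_r$; and (iii) identify the $\mathbb{P}^1$-valued blow-up coordinate on the exceptional fiber and show that it tends to $\sL$.

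For stage (i), assume first that $\sL \neq \infty$. The line $E e_2 \subset D_{k,\sL}$ is annihilated by $N$, is $\varphi$-stable with eigenvalue $p^{r/2}$, and meets the filtration only in degrees $\leq 0$; via the equivalence between admissible filtered $(\varphi,N)$-modules and trianguline $(\varphi, \Gamma)$-modules over the Robba ring, it thus gives a sub $\R(\delta_1) \subset \Dr(V^*_{k,\sL})$ with $\delta_1 = \mathrm{unr}(p^{r/2})$ and quotient $\R(\delta_2)$ with $\delta_2 = \mathrm{unr}(p^{k/2}) \, x^{-(k-1)}$, satisfying $\delta_1\delta_2^{-1} = x^r \chi$; hence $(\delta_1,\delta_2) \in F_r$. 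Analogously, the refinement of $V^*_{k_n,a_n}$ along the unit-slope root $\alpha_n$ of $X^2 - a_n X + p^{k_n - 1}$ yields $\delta_1^{(n)} = \mathrm{unr}(\alpha_n)$ and $\delta_2^{(n)} = \mathrm{unr}(\beta_n) \, x^{-(k_n-1)}$ with $\beta_n = p^{k_n-1}/\alpha_n$. Stage (ii) is then routine: $\alpha_n \to p^{r/2}$, $\beta_n \to 0$, and $k_n - 1 \to k-1$ in weight space, since $p^n \to 0$ $p$-adically.

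For stage (iii), I would use a local model of $\til{\sT}_2$ at $(\delta_1,\delta_2)$ in which $F_r$ is the vanishing locus of $u := \bigl(\delta_1\delta_2^{-1}(x^r\chi)^{-1}\bigr)(p) - 1$ and $v := \mathrm{wt}\bigl(\delta_1\delta_2^{-1}(x^r\chi)^{-1}\bigr)$, with the blow-up introducing the ratio $[u : v] \in \mathbb{P}^1$. For the sequence this gives $u_n = \alpha_n^2/p^r - 1$ and $v_n = (k_n-1) - (r+1) = (p-1)p^n$; using $\alpha_n \approx a_n = p^{r/2}(1 + \sL(p-1)p^n/2)$ (the correction from $\beta_n$ being of much higher valuation), one finds $u_n = \sL(p-1)p^n + O(p^{2n})$, hence $u_n/v_n \to \sL$. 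The specific form of $a_n$ --- in particular the factor $1/2$ --- is tailored so that squaring produces exactly the linear term $\sL(p-1)p^n$ matching $v_n$. For $\sL = \infty$, the accelerated growth $k_n = k + (p-1)p^{n^2}$ makes $v_n$ far smaller than $u_n \sim 2p^n$, so $v_n/u_n \to 0$ and the blow-up coordinate tends to $\infty$.

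The main obstacle is stage (iii): choosing the affine chart on $\til{\sT}_2$ precisely enough that the limiting ratio is $\sL$ itself rather than a constant multiple, and justifying the leading-order expansions of $\alpha_n, \beta_n$ rigorously in $\brqp$. This rests on the explicit rigid-analytic description of the blow-up along $F$ developed earlier in the paper.
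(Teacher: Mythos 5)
Your strategy is the same one the paper follows: compute the trianguline parameters, track the limit of the point in the base (Lemmas 7.1 and 7.2 in the paper carry out the leading-order expansions you sketch for $y_n^2 - p^r$ and the weight difference), and then read off the parameter on the exceptional fiber. Stages (i) and (ii) are fine, and your choice of normalized coordinates $u,v$ is a pleasant observation (it absorbs the ``fudge factor'' $-(1+p)^{r+1}\log(1+p)/p^r$ that appears in the paper's Theorem~\ref{The L invariant} when one instead uses the raw coordinates $f_1 = S_1 - p^r$, $f_2 = S_2 - ((1+p)^{k-1}-1)$).

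The genuine gap is that you treat the identification of the blow-up coordinate with the $\sL$-invariant as a matter of ``choosing the affine chart precisely enough.'' It is not: this identification is the technical heart of the entire paper (Sections 3--6), and one cannot avoid it by a clever normalization. The exceptional fiber is a $\mathbb{P}^1$, and any chart gives a $\mathbb{P}^1$-coordinate well-defined up to a M\"obius transformation; what is needed, and is hard, is to show that the point with coordinate $(a:b)$ corresponds to the $(\varphi,\Gamma)$-module whose $\sL$-invariant (in Colmez's sense) is a specific scalar times $a/b$. Proving this (Theorem~\ref{The L invariant}) requires: Proposition~\ref{fiber and tangents} identifying exceptional-fiber points with tangent directions via the rigid-analytic blow-up construction; the explicit generator of $H^1(m\RU(\delta_\U))$ built from a $(\psi,\Gamma)$-computation (Section~\ref{big phi gamma modules}); the explicit description of Colmez's power series $G(|x|,r+1)$ and $G'(|x|,r+1)$; the change of basis between Colmez's and Benois's bases of $H^1(\calR_{\qp}(x^r\chi))$ (Proposition~\ref{change of basis}); and a residue computation. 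None of this is suggested by your plan. Moreover, even granting that formula, there is a further dictionary step you omit: the Colmez $\sL$-invariant of the limit point turns out to be $-\sL$, not $\sL$, and one must invoke \cite[Section 4.6, Proposition 4.18]{Col} to match $V(\mu_{p^{r/2}}, \mu_{1/p^{r/2}}\chi^{1-k}, -\sL)$ with the filtered $(\varphi,N)$-module $D_{k,\sL}$ defining $V^*_{k,\sL}$. Your claim that ``$u_n/v_n \to \sL$, hence the blow-up coordinate tends to $\sL$'' is correct as a computation, but the conclusion that the limit is $V^*_{k,\sL}$ requires both the $\sL$-invariant formula and this sign-aware dictionary, and your plan supplies neither.
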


%\subsection{Limits of crystalline representations with bounded weights}

\newpage
\noindent {\bf Remarks.} %(Limits of crystalline representations with bounded weights.)
% Berger \cite[Th\'eor\`eme 1]{Ber04} proved that the limit of subquotients of crystalline representations with bounded Hodge-Tate weights belonging to  an interval $[a, b]$ is also crystalline with Hodge-Tate weights in $[a, b]$.
%
%
\begin{enumerate}
\item The weights $k_n$ appearing in the theorem are for sufficiently large $n$ {\it exceptional} in the sense that they are two more than twice the valuation $v_p(a_n)$  %slope 
of $a_n$ modulo $(p-1)$ \cite{Gha}. This will be of key importance in what follows. 
  In fact, if $(k_n, a_n)$ is an arbitrary sequence of points such that $V^*_{k_n,a_n} \rightarrow  V^*_{k, \sL}$, then the
  $k_n$ are eventually exceptional weights. Indeed, 
  % by taking tame parts of the determinants,
  it is not hard to see that  $(k_n, a_n) \rightarrow (k,p^{r/2})$ and $k_n \equiv k \mod (p-1)$ for  large $n$.   
  %In section \ref{Comparing with Greenberg-Stevens}, we see that the semi-stable representation $V_{k, \sL}^*$ can be approached by many sequences of crystalline representations. The weights $l$ of these representations are congruent to $k$ modulo $(p - 1)$. Moreover $\lim_{l \to k}a_p(l) = a_p(k)$ implies that eventually $l$ is equal to two more than twice the valuation of $a_p(l)$. This means that eventually the crystalline representations $V_{l, a_p(l)}^*$ are exceptional.

\item In the case $\sL \neq \infty$, the sequence of points $(k_n, a_n)$ in Theorem~\ref{any l invariant} lies on the line
$a_p(l) = p^{r/2}(1 + \frac{\sL}{2}(l - k))$. Clearly  % $a_p(k)^{-1}a_p'(k) = \frac{\sL}{2},$ i.e.,
\begin{eqnarray}
  \label{log-derivative-intro}
  \sL = 2 \dfrac{ a'_p(k)}{a_p(k)}.
\end{eqnarray}
In Section~\ref{Comparing with Greenberg-Stevens}, we show that
\eqref{log-derivative-intro}
holds more generally for an arbitrary sequence $(k_n, a_n)$ % \rightarrow (k,p^{r/2})$
of points on {\it any} smooth curve $a_p(l)$ such that $V^*_{k_n,a_n}
\rightarrow  V^*_{k, \sL}$
in the blow-up space $\til{\sT}_2$. The formula
\eqref{log-derivative-intro} is a variant of a classical formula initially
proved by Greenberg-Stevens \cite[Theorem 3.18]{GS} for elliptic curves
with split multiplicative reduction (a weight $2$, slope 0 case) and
extended by Stevens \cite[Theorem B]{Ste}
(see also Bertolini-Darmon-Iovita \cite[Theorem 4]{BDI}) to higher weights
and slopes. Further generalizations were proved by Colmez
\cite[Th\'eor\`eme 0.5, Corollaire 0.7]{Col10}, Benois \cite[Theorem
2]{Ben10} and others. This classical formula was a key local ingredient in the proof
of the Mazur-Tate-Teitelbaum conjecture for elliptic curves due to
Greenberg-Stevens.
% , Kato-Kurihara-Tsuji,
Our proof of \eqref{log-derivative-intro}
is essentially geometric.
%(we work in the blow-up space mentioned above) and seems new.
%
%In Section~\ref{Comparing with Greenberg-Stevens}, we show that \eqref{log-derivative-intro} 
%holds more generally for an arbitrary sequence $(k_n, a_n)$ % \rightarrow (k,p^{r/2})$
%of points on {\it any} smooth curve $a_p(l)$ such that $V^*_{k_n,a_n} \rightarrow  V^*_{k, \sL}$
%in the blow-up space $\til{\sT}_2$. % {\color{blue}(see Remark \ref{Smoothening the rank one subobject})}.
%The formula \eqref{log-derivative-intro} is classical and
%was initially proved by Greenberg-Stevens \cite[Theorem 3.18]{GS} for elliptic curves
%with split multiplicative reduction (a weight $2$, slope 0 case) and extended by Stevens \cite[Theorem B]{Ste}
%(see also Colmez \cite[Corollary 0.7]{Col10}) to modular forms of higher weights and slopes. It
%is a key local ingredient in the proof of the Mazur-Tate-Teitelbaum conjecture due to Greenberg-Stevens.
%Our result \eqref{log-derivative-intro} is more general. Moreover, 
%% , Kato-Kurihara-Tsuji,
%% \item
%our proof is % \eqref{log-derivative-intro}
% essentially geometric.
Recall the classical picture in algebraic geometry
  of the blow-up of ${\mathbb A}^2$ at a point:

%{\color{blue}To the best of our knowledge, Colmez's proof uses a family of Galois representations in which members are local components of representations of $\mathrm{Gal}(\brbq/\bq)$ attached to classical modular forms. For a crystalline representation of this form, the $p^{\mathrm{th}}$ Fourier coefficient $a_p$ of the corresponding modular form is algebraic over $\bq$ and the complex absolute value of $a_p$ is less than or equal to $p^{\frac{k - 1}{2}}$, where $k$ is the weight of the modular form. We make no such assumption.}

\qquad \qquad {\centering
{\tiny
\begin{tikzpicture}
  \draw[<->] (-1.5, 0) -- (1.5,0) node[right] {$x$};
  \draw[<->] (-1.5, -0.75) -- (1.5,0.75) node[right, right, above] {$y$};
  \draw[-] (0.05,-0.10) -- (0.05,-0.1) node[right,  below] {\small{$\>\>\> (0,0)$}};
  \draw[<->] (-1.5, 3) -- (1.5, 3) node[right] {$x$};
  \draw[-] (0,2.97) -- (0,2.97) node[] {$\bullet$};
  \draw[-] (-0.3,2.93) -- (-.3,2.93) node[right,  below] {\small{$\>\>\> 0$}};
  \draw[<->] (-1.5, 4.00) -- (1.5,5.50) node[right, right, above] {$y$};
   \draw[-] (0,4.75) -- (0,4.75) node[] {$\bullet$};
   \draw[-] (-.3,5.08) -- (-.3,5.08) node[right,  below] {\small{$\> \infty$}};
  \draw[->, thick, green] (0, 1.4) -- (0,0.6) node[above] {}; 
  \draw[-] (0,0) -- (0,0) node[] {$\bullet$}; 
  \draw[-, thick] (0, 1.8) -- (0,5.2) node[above] {}; 
  \draw[scale=1.3, domain=-.85:.85, smooth, variable=\x, blue] plot ({\x}, {\x*\x*\x+0.15*\x}); 
  \draw[-, blue] (1.4,1.4) -- (1.4,1.4) node[] {\small{$y=f(x)$}};
  \draw[scale=1.3, domain=-.9:.9, smooth, variable=\x, blue] plot ({\x}, {1.5 * \x*\x*\x+0.05*\x+2.78}); 
  \draw[-] (0,3.6) -- (0,3.6) node[] {$\bullet$};
  \draw[red] (1.6, 4.6) -- (1.6,4.6) node[right] {\small{$\dfrac{b}{a} =  \dfrac{f(x)}{x}$}};
  \draw[-, red] (-0.6,3.6) -- (-0.6,3.6) node[above] {\small{$\therefore f'(0)$}};
\end{tikzpicture}
} \qquad
{\small
\begin{tikzpicture}
  \draw[-] (0, 0) -- (0,0) node[] {\small{Blow-up of ${\mathbb A}^2$ at $(0,0)$}};
  \draw[-] (0, 1.1) -- (0,1.1) node[] {$\{(x,y) \in {\mathbb A}^2\}$}; 
  \draw[->, thick, green] (0, 4.5) -- (0,1.7) node[right]  {}; 
  \draw[-] (0, 5.0) -- (0, 5.0) node[] {$\{(x,y, a:b) \in {\mathbb A}^2  \times {\mathbb P}^1 : xb = ya\}$};   
\end{tikzpicture}
}
}

%In this context, we also remark that there is no negative sign in \eqref{log-derivative-intro} because
%the above authors use the $\sL$-invariants attached to an elliptic curve (respectively,  modular form) defined by Tate
%(respectively, Coleman, Fontaine-Mazur \cite{Maz}, and others) which are the negatives of the $\sL$-invariant appearing
%in the filtration on $D_{k, \sL}$.

The strict transform %to the blow-up
of the curve $y = f(x)$ passing through the origin $(0,0)$
in ${\mathbb A}^2$ passes through the exceptional divisor ${\mathbb P}^1$ above the origin at `height'
the derivative of $f$ at $0$. The proofs of \eqref{log-derivative-intro}
and Theorem~\ref{any l invariant} are based on an analogous principle in a rigid analytic setting.

\item The techniques used to prove  Theorem~\ref{any l invariant}  can also be used to prove that the limit of a sequence of irreducible two-dimensional crystalline representations of  $\mathrm{Gal}(\br{{\mathbb Q}}_p/\qp)$ with Hodge-Tate weights belonging to an
interval $[a, b]$ is also irreducible crystalline with Hodge-Tate weights in $[a, b]$, at least if the difference of the Hodge-Tate weights of the representations in the sequence is at least $2$ infinitely often. This gives another (geometric) proof of a special case of a
general result of Berger \cite[Th\'eor\`eme 1]{Ber04} 
%using geometrical methods 
(see Section~\ref{section further implication}). % for the details).
\end{enumerate}

\subsection{Computing the $\sL$-invariant}
  \label{computing L}

  In this section, we explain the techniques involved in proving Theorem~\ref{any l invariant}. The discussion should also serve as an overview of the contents
  of Sections~\ref{blow-up} to \ref{Section proof of main theorem}. 

Let $E$ be a finite extension of $\qp$. Let $\calR_E$ be the Robba ring over $E$ consisting of bidirectional power series having coefficients in $E$ that converge on the elements of $\brqp$ with valuation in $]0, M]$, for some $M > 0$. For a more precise description, see Section \ref{link robba ring here}.

%\begin{equation}
%   \label{kn an}
%    (k_n, a_n) =
%    \begin{cases}
%        (k + p^n(p - 1), \> p^{r/2}(1 + p^n\sL(p - 1)/2)), & \text{if $\sL \not = \infty$} \\
%        (k, \> p^{r/2}(1 + p^n)), & \text{if $\sL = \infty$}.
%      \end{cases}
%\end{equation}

The $E$-valued points of $\til{\sT}_2$ are
tuples $(\delta_1, \delta_2, L)$ where $\delta_1$, $\delta_2$ are $E^*$-valued characters of $\qp^*$ (with $\delta_1\delta_2^{-1} \neq x^{-j}$ for any $j \geq 0$)
and $L \in  \mathbb{P}^1(E)$ is the $\sL$-invariant 
of the $(\varphi,\Gamma)$-module associated to the isomorphism class of the non-split extension
$$0 \rightarrow \calR_E(\delta_1) \rightarrow * \rightarrow \calR_E(\delta_2) \rightarrow 0$$
%(which can be thought of as element of the projectivization of the two-dimensional cohomology group $H^1(\calR_E(\delta_1\delta_2^{-1}))$),
when $\delta_1\delta_2^{-1} = x^i\chi$ for $i \geq 0$ is an exceptional character, and is taken to be $\infty$ otherwise (more precisely,
in the former case, $L$ is the $\sL$-invariant defined by Colmez - see Section~\ref{link robba ring here} - of this extension twisted by $\delta_2^{-1}$).

The functor $\Dr$ sets up an equivalence of categories between the category of $E$-linear representations of $\mathrm{Gal}(\br{\mathbb{Q}}_p/\qp)$ and the
category of $(\varphi, \Gamma)$-modules over $\calR_E$ of slope $0$. Let $k \geq 3$ and $r = k-2$, and for $n \geq 1$, let  $(k_n, a_n)$ be as in \eqref{kn an}.
By \cite[Proposition $3.1$]{Ber}, $\Dr(V_{k_n, a_n}^*)$ is an extension
$$0 \rightarrow \calR_E(\mu_{y_n}) \rightarrow  \Dr(V_{k_n, a_n}^*) \rightarrow  \calR_E(\mu_{1/y_n}\chi^{1 - k_n}) \rightarrow 0,$$
where
\begin{equation}
    \label{yn}
    y_n = \frac{a_n + \sqrt{a_n^2 - 4p^{k_n - 1}}}{2}.
\end{equation}
This allows us to associate to  (the dual of) $V_{k_n, a_n}$ % is associated it a point of $\til{\sT}_2$. 
%Thus, %for large $n$,
%to (the dual of) $V_{k_n, a_n}$ is associated
the point $(\mu_{y_n}, \mu_{1/y_n}\chi^{1 - k_n},\infty)$ of the blow-up $\til{\sT}_2$.
%lying above the point $(\mu_{y_n}, \mu_{1/y_n}\chi^{1 - k_n})$ in the base $(\sT \times \sT) \setminus F'$.
%For each $n \geq 1$, the crystalline representation $V_{k + p^n(p - 1), p^{r/2}(1 + p^n\sL(p - 1)/2)}$ is associated to the unique point of $\til{\sT}_2$ lying above the point $(\mu_{y_n}, \mu_{1/y_n}\chi^{1 - k - p^n(p - 1)})$ of $\sT \times \sT \setminus F'$. 
%We therefore get a sequence of points in $\til{\sT}_2$ corresponding to the sequence of crystalline representations $V^*_{k_n, a_n}$.
 We claim that for $n \geq 1$ this sequence of points converges in the blow-up
% to a point in the fiber above the exceptional point $(\mu_{p^{r/2}}, \mu_{1/p^{r/2}}\chi^{1 - k})$. We claim that the limit is
 to the point
 \begin{eqnarray*}
   \begin{split}
     (\mu_{p^{r/2}}, \mu_{1/p^{r/2}}\chi^{1 - k},-\sL), &  \qquad & \text{ if } \sL \neq \infty, \\
     (\mu_{p^{r/2}}, \mu_{1/p^{r/2}}\chi^{1 - k},\infty), & \qquad  & \text{ if } \sL = \infty.
   \end{split}
 \end{eqnarray*}
%where $\sL$ is the parameter appearing in the formulas \eqref{kn an} when $\sL \neq \infty$,
%and to the point $(\mu_{p^{r/2}}, \mu_{1/p^{r/2}}\chi^{1 - k},\infty)$ in the $\sL = \infty$ case in  \eqref{kn an}.
It turns out that the corresponding $(\varphi, \Gamma)$-module is \'etale (for any $\sL$) and that the corresponding
Galois representation is the semi-stable representation $V^*_{k, \sL}$  (see the end of Section~\ref{proof of main theorem}).
In the $\sL = \infty$ case, the representation $V_{k,\sL}^*$ is in fact crystalline. This proves Theorem~\ref{any l invariant}.
%Details on the conversion from the limit point $(\mu_{p^{r/2}}, \mu_{1/p^{r/2}}\chi^{1 - k},\infty)$ to the Galois representation $V_{k, \sL}^*$ are given at the end of Section \ref{proof of main theorem}.}

%Each point in this fiber corresponds to a semi-stable representation of
%the form $V^*_{k, \sL}$ for some $\sL \in \mathbb{P}^1(\brqp)$.

%We wish to compute the $\sL$-invariant of the limit point.

It remains to prove the claim. 
Let $\til{\sT}$ be the blow-up of $\sT\setminus\{x^{-j}\}_{j \geq 0}$ at $\{x^i \chi\}_{i \geq 0}$.
The $E$-valued points of $\til{\sT}$ are
tuples $(\delta, L)$ where $\delta$ is an $E^*$-valued character of $\qp^*$ (with $\delta \neq x^{-j}$ for any $j \geq 0$)
and $L \in  \mathbb{P}^1(E)$ is the $\sL$-invariant (defined by Colmez, see Section~\ref{link robba ring here})
of the $(\varphi,\Gamma)$-module associated to the isomorphism class of the non-split extension
$$0 \rightarrow \calR_E(\delta) \rightarrow * \rightarrow \calR_E  \rightarrow 0$$
when $\delta = x^i\chi$ for $i \geq 0$ is an exceptional character, and is taken to be $\infty$ otherwise.
%In fact, in the
%former case, the point $(\delta,L)$ can be thought of as a tangent direction at $\delta$, which yields a specific
%element in the projectivization of the two-dimensional cohomology group $H^1(\calR_E(\delta))$.
Now a sequence of points $(\delta_{1,n}, \delta_{2,n}, L_n)$ in $\til{\sT}_2$ converges to a point $(\delta_1, \delta_2, L)$ in $\til{\sT}_2$ if and only if
\begin{itemize}
  \item $\delta_{1,n}$ and $\delta_{2,n}$ converge to $\delta_1$ and $\delta_2$, respectively, in $\sT$, and
  \item $(\delta_{1,n} \delta_{2,n}^{-1}, L_n)$ converges to $(\delta_1 \delta_2^{-1}, L)$ in $\til{\sT}$.
 \end{itemize}
 That is, with respect to the following commutative diagram 
\[
    \begin{tikzcd}[row sep = 0.3cm, column sep = 1.2cm]
        \til{\sT}_2  \arrow{dd} \arrow{r} & \til{\sT}  \arrow{dd} \\ & \\
        \sT \times \sT \setminus F' \arrow{r} & \sT \setminus \{x^{-j}\}_{j \geq 0}
    \end{tikzcd}
\]
where the bottom map is the restriction of the twisting map  $\sT \times \sT \to \sT$ sending $(\delta_1, \delta_2)$ to $\delta_1\delta_2^{-1}$,
the top map sends $(\delta_1, \delta_2, L)$ to $(\delta_1 \delta_2^{-1},L)$ and the vertical maps are the blow-up maps at $F$ and $\{x^i\chi\}_{i \geq 0}$, respectively,
the sequence  $(\delta_{1,n}, \delta_{2,n}, L_n)$ converges to $(\delta_1, \delta_2, L)$ in $\til{\sT}_2$ if and only if the projections of this
sequence under the left vertical map and the top horizontal map  converge to the corresponding projections of $(\delta_1, \delta_2, L)$.
Indeed, the top map is obtained from the
universal property of the blow-up map on the right (this can be checked on charts using the definition of the map $g$ constructed
after Lemma~\ref{zero div}), so there is an induced map  from $\til{\sT}_2$ to the fiber product of  $\sT \times \sT \setminus F'$ and
$\til{\sT}$ over $\sT \setminus \{x^{-j}\}_{j \geq 0}$ which is a closed immersion (\cite[Proposition 3.1.2]{Sch}), hence induces an inclusion on points
$\til{\sT}_2(E)$ to the fiber product of $(\sT \times \sT \setminus F')(E)$ and $\til{\sT}(E)$ over $(\sT \setminus \{x^{-j}\}_{j \geq 0})(E)$ with closed
image, and then this is the definition of convergence in the last fiber product.

%The first step to compute the $\sL$-invariant is to transfer the above sequence of points from $\til{\sT}_2$ to $\til{\sT}$,
%by the map of rigid analytic varieties $\til{\sT}_2 \to \til{\sT}$ (obtained by the universal property for blow-ups) appearing 
%in the top of the square 
%\[
%    \begin{tikzcd}[row sep = 0.3cm, column sep = 1.2cm]
%        \til{\sT}_2  \arrow{dd} \arrow{r} & \til{\sT}  \arrow{dd} \\ & \\
%        \sT \times \sT \setminus F' \arrow{r} & \sT \setminus \{x^{-j}\}_{j \geq 0}
%    \end{tikzcd}
%\]
%where the bottom map is obtained by restricting the twisting map, 
%and the vertical maps are the blow-up maps given by blowing up at $F$ and $\{x^i \chi\}_{i \geq 0}$, respectively.
%On points the top map is given by $(\delta_1, \delta_2, L) \mapsto (\delta_1 \delta_2^{-1}, L)$.

%It turns out that the above sequence of points converges to $(\mu_{p^{r/2}}, \mu_{1/p^{r/2}}\chi^{1 - k},-\sL)$ if and only
%if the projection of the above sequence under the left vertical map converges to  $(\mu_{p^{r/2}}, \mu_{1/p^{r/2}}\chi^{1 - k})$
%and the projection of the above sequence under the top map converges to $(\mu_{p^{r}}\chi^{k-1},-\sL)$.
An easy check shows that
the characters $\mu_{y_n}$ and $\mu_{1/y_n}\chi^{1 - k_n}$
converge to the characters $\mu_{p^{r/2}}$ and $\mu_{1/p^{r/2}}\chi^{1 - k}$ respectively. The ratio of these characters is the
exceptional character $\mu_{p^r}\chi^{k - 1} = x^{r}\chi$. Thus, if the sequence $(\mu_{y_n}, \mu_{1/y_n}\chi^{1 - k_n},\infty)$ converges in $\til{\sT}_2$,
then it converges to a point in the fiber over $F_r$.  Thus, it remains to check that $(\mu_{y_n^2} \chi^{k_n-1},\infty)$ converges in $\til{\sT}$ to
\begin{eqnarray}
  \label{limit in tildeT}
  \begin{split}
    (x^r\chi, -\sL), & \qquad &  \text{ if } \sL \neq \infty, \\
    (x^r\chi, \infty), & \qquad & \text{ if } \sL = \infty.
  \end{split}
\end{eqnarray}

In order to prove \eqref{limit in tildeT}, we set up local coordinates $\U$ around the point $x^r \chi$, describe the
blow-up $\tU$ of this coordinate patch with center at the exceptional character $x^r \chi$ explicitly (see Section~\ref{blow-up} for details)
and compute the limit in $\tU$.
%We finally use the important but technical Theorem~\ref{The L invariant} to show that the (Colmez) $\sL$-invariant of the corresponding $(\varphi,\gamma)$-module
%is $-\sL$ if $\sL \neq \infty$
%and is $\infty$ in the $\sL = \infty$ case, proving \eqref{limit in tildeT}.
%% We do the limit computations in the blow-up of a small neighbourhood of the point $\mu_{p^r}\chi^{k - 1}$ in $\sT$, which we describe now.
%
%To compute the limit of the sequence of points in $\til{\sT}_2$ associated to $V_{k + p^n(p - 1), p^{r/2}(1 + p^n\sL(p - 1)/2)}$ and to prove that the $\sL$-invariant of this limit is $\sL$, we first transfer the sequence from $\til{\sT}_2$ to $\til{\sT}$, the blow-up of c, by the map of rigid analytic varieties $\til{\sT}_2 \to \til{\sT}$ induced by the map $\sT \times \sT \to \sT$ given by $(\delta_1, \delta_2) \mapsto \delta_1\delta_2^{-1}$ on $E$-valued points. We observe that the transferred sequence approaches the fiber above the exceptional point $\mu_{p^r}\chi^{k - 1} = x^r\chi \in \sT(\qp)$. Each point in this fiber corresponds to a semi-stable representation of the form $V_{k, \sL}$ for some $\sL \in \mathbb{P}^1(\brqp)$. 
%
%
%We now describe the local coordinate patch $\U$ in $\sT$ around the exceptional point and its blow-up $\tU$.
Let $\zetap$ be a fixed primitive $(p - 1)^\mathrm{th}$ root of unity. Associating the tuple $(\delta(p), \delta(\zetap), \delta(1 + p) - 1)$ to a character $\delta \in \sT(\qp)$ identifies $\sT(\qp)$ with $\qp^* \times \mu_{p - 1} \times p\zp$. Under this identification, the exceptional character $\mu_{p^r}\chi^{k - 1}$ goes to the tuple $(p^r, \zetap^{k - 1}, (1 + p)^{k - 1} - 1)$.
The set $p^r\zp^* \times \{\zetap^{k - 1}\} \times p\zp$ is a neighborhood of $\mu_{p^r}\chi^{k - 1}$ in $\sT(\qp)$. This leads us to consider the following affinoid algebra
\[
\U = \text{Sp } \qp \langle S_1, S_2, T_1, T_2, T_3 \rangle /(p^rT_1 - S_1, 1 - T_1T_2, pT_3 - S_2)
\]
as a neighbourhood of $\mu_{p^r}\chi^{k - 1}$ in $\sT$ because clearly $\U(\qp) = p^r\zp^* \times p\zp$. The variable $S_1$ corresponds to the first factor and $S_2$ to the second factor. From now on, by fixing the tame part of the characters under consideration, we identify $\U(E)$ with the subset $p^r\co_{E}^* \times \{\zetap^{k-1}\} \times p\co_{E}$ of $\sT(E)$,
where $\co_E$ is the ring of integers of $E$.
%{\color{blue} The variable $S_1$ corresponds to the factor $p^r\co_{E}^*$ of $\U(E)$ and the variable $S_2$ corresponds to the factor $p\co_{E}$ of $\U(E)$.}

The character $\mu_{p^r}\chi^{k - 1} = x^r \chi$ corresponds to the maximal ideal $$m = (S_1 - p^r, S_2 - ((1 + p)^{k - 1}-1))$$ of $\co(\U)$.
% Therefore the local picture of $\til{\sT}$ around the fiber over the point $\mu_{p^r}\chi^{k - 1}$ is described by the blow-up $\tU$ of $\U$
% at the maximal ideal $m$.
The blow-up $\tU$ of $\U$ at the maximal ideal $m$ turns out to have the following standard description (see \eqref{points-blow-up}):
\[
  \tU(E) = \{(s_1, s_2,\> \xi_1 : \xi_2) \in \U(E) \times \mathbb{P}^1(E) \text{ } \vert \text{ } (s_1 - p^r)\xi_2 = (s_2 - ((1 + p)^{k-1}-1))\xi_1\}.
\]

For large $n$, the  points  $(\mu_{y_n^2} \chi^{k_n-1},\infty)$ in $\til{\sT}$ %associated with $V^*_{k_n, a_n}$
lie in $\tU$. We prove that the sequence converges in the blow-up $\tU$ to the point (see Section~\ref{proof of main theorem}) \\

\qquad \qquad
\begin{tabular}{l l} 
  $\left(p^r, (1 + p)^{k - 1} - 1, \> \sL \> \dfrac{p^r}{(1 + p)^{k - 1} \> \mathrm{log}(1 + p)} : 1\right)$, & \text{if $\sL \not = \infty$}  \\ [7pt]
  $(p^r, (1 + p)^{k - 1} - 1, \> 1 : 0)$, & \text{if $\sL = \infty$}, 
\end{tabular}
\\ [2pt]

\noindent where log is normalized so that $\log(p) = 0$.   
% We prove that the sequence of characters corresponding to $V_{k + p^n(p - 1), p^{r/2}(1 + p^n\sL (p - 1)/2)}$ converges to the point $(p^r, (1 + p)^{k - 1} - 1, \sL p^r(p - 1) : (1 + p)^{k - 1}(p - 1)\mathrm{log}(1 + p)) \in \tU(E)$.
The proof of \eqref{limit in tildeT} %Theorem \ref{any l invariant}
then follows immediately from Theorem~\ref{The L invariant}, a technical but important formula for the $\sL$-invariant of a point
in the exceptional fiber, noting that the fudge factor there cancels with the extra factor
appearing in the third coordinate of the limit point above when $\sL \neq \infty$ and flips the sign.

%We would like to highlight the parts of the literature that are used to prove Theorem \ref{The L invariant}. Using \cite[Theorem 2.33]{Che}, we get a cohomology class, which we write in terms of the basis studied by Benois (cf. \cite[Proposition 1.5.5]{Ben}). Restating Colmez's formula for the $\sL$-invariant (cf. \cite[Definition 2.20]{Col}) in terms of Benois's basis, we see that the $\sL$-invariant of a point in the exceptional fiber is equal to the ratio of its projective coordinates up to a fudge factor.

Theorem~\ref{The L invariant} is proved as follows. Given a point in the exceptional fiber, we convert it to a tangent direction in $\U$ at the point $(p^r, (1 + p)^{k - 1} - 1)$, i.e., an element of $\mathbb{P}(\mathrm{Hom}(m/m^2\otimes_{\qp} E, E))$. This is done using the map in Proposition~$\ref{fiber and tangents}$. We then explicitly describe the isomorphism $\mathbb{P}(\mathrm{Hom}(m/m^2\otimes_{\qp}E, E)) \to \mathbb{P}(H^1(\calR_{E}(x^r\chi)))$ stated in \cite[Theorem 2.33]{Che}, using some preparatory material on the cohomology of `big' $(\varphi, \Gamma)$-modules in Section~\ref{big phi gamma modules}. The image of the given point in the exceptional fiber under the composition of these two maps yields a cohomology class in $H^1(\calR_{E}(x^r\chi))$
(up to scalars).
The corresponding $(\varphi, \Gamma)$-module (up to isomorphism) is referred to as `the $(\varphi,\Gamma)$-module' associated to the given point (Definition~\ref{the phi,Gamma module}).
We then represent this cohomology class as an explicit linear combination of the basis elements of $H^1(\calR_{E}(x^r\chi))$ studied by Benois
in \cite[Proposition 1.5.4]{Ben}. The original formula for the $\sL$-invariant due to Colmez is, however, in terms of a different basis of
$H^1(\calR_{E}(x^r\chi))$, namely, the one constructed in \cite[Proposition 2.19]{Col} (see Section~\ref{link robba ring here}). Restating the
formula for the $\sL$-invariant in terms of Benois' basis (Definition~\ref{def of L inv}) allows us to  give a formula for the $\sL$-invariant of
the given point in the exceptional fiber.

\subsection{Reductions of semi-stable representations}

In \cite[Proposition 3.9]{Che}, Chenevier proved that points of the space $\til{\sT}_2$ lie in families of $(\varphi, \Gamma)$-modules. By Kedlaya-Liu \cite[Theorem 0.2]{KL10}, such
a family comes from a family of Galois representations at least affinoid locally around an \'etale point. Furthermore (see, e.g., the discussion on \cite[p. 1513]{Che}, which uses results of \cite{Che14} on pseudorepresentations), the semi-simplification of the reduction of Galois representations living in connected families are isomorphic. This means that if the points in $\til{\sT}_2$ corresponding to two Galois representations are close, then the semi-simplifications of the reductions of the two Galois representations are the same. Moreover, the dual of the reduction of a lattice in a $p$-adic representation is the same as the reduction of the dual lattice in the dual representation. Using these facts, along with Theorem~\ref{any l invariant}, we see that if one knows the reductions of the crystalline representations appearing in Theorem~\ref{any l invariant}, then one can compute the reduction of $V_{k,\sL}$ for any $k \geq 3$ and $\sL \neq \infty$ (the case $k = 2$ is not as interesting since
the reduction is always reducible). 

More generally, the above method allows one to compute the reduction of any irreducible two-dimensional non-crystalline semi-stable representation with distinct Hodge-Tate weights. Indeed,
suppose $V$ is such a semi-stable representation. Twisting by $\chi^a$ for some integer $a$, we may assume that the Hodge-Tate weights of the semi-stable representation $V \otimes \chi^a$ are $(0, k - 1)$ for an integer $k \geq 2$. By, for instance, Guerberoff-Park \cite[Lemma 3.1.2 (3)]{GP}, the filtered $(\varphi, N)$-module  $D_\mathrm{st}((V \otimes \chi^a)^*)$
is the module $D(\lambda, \sL)$
described in \cite[Example 3.1.1]{GP} with $\lambda = u p^{\frac{k-2}{2}}$ for some unit $u$ (since $r$ there is equal to $k - 1$).
Now $V \otimes \chi^a \otimes \mu_u$ is isomorphic to $V_{k, \sL}$ as can be seen by comparing the
corresponding filtered $(\varphi, N)$-modules.
By \cite[Lemma 3.1.2 (4)]{GP}, we must have $k \geq 3$.
% The assumption that the Hodge-Tate weights of $V$ are distinct is not too restrictive. In practice, the local components of the $p$-adic Galois representations attached to primitive cuspidal newforms with level exactly divisible by $p$ have distinct Hodge-Tate weights.}

This approach to computing the reduction of semi-stable representations using crystalline representations is of some importance because the reductions of these two classes of
representations are nowadays largely studied by completely different methods: the crystalline case uses the compatibility of reduction between
the $p$-adic and mod $p$ Local Langlands correspondences, or computes the reduction of the corresponding Wach module, whereas the reductions in the semi-stable case are
determined by studying the reductions of the corresponding strongly divisible modules.
In our experience, the former methods, while quite intricate,
are not as complicated as the latter method. Thus, in view of the remarks above, 
the techniques used in the crystalline case may be brought to bear on the study of the reductions of semi-stable representations. We note, however, that the former
method is only available for two-dimensional representations of $\mathrm{Gal}(\brqp/\qp)$, whereas the latter method is available in principle for
representations of $\mathrm{Gal}(\brqp/\qp)$ of any dimension (though in practical terms only for those of small Hodge-Tate weights).

\par Let us illustrate this with some examples. The reductions of semi-stable representations have been computed completely for even weights in the range $[2, p - 1]$ by Breuil and M\'ezard \cite{BM}, and for odd weights in the same range by Guerberoff and Park \cite{GP} at least on inertia. In \cite{Gha}, the second author made the following conjecture called the zig-zag conjecture\footnote{This version is mildly different from
   \cite[Conjecture 1.1]{Gha} in that there we require $k$ to be sufficiently far away
   from some weights which are strictly larger than $p+1$, whereas here
   $k$ is required to be sufficiently close to the weights $3 \leq k_0 \leq p+1$.}
 describing the reductions of crystalline representations of exceptional weights and half-integral
slopes in terms of an alternating sequence of reducible and irreducible mod $p$ representations. 

\begin{conjecture}[Zig-Zag Conjecture] 
  \label{zig-zag-conj}
  Say that % $v = v(a_p)$  > 0 $ and that
  $k \equiv k_0 = 2v(a_p)+2 \mod (p-1)$ is an exceptional congruence class of weights
  for a particular half-integral slope $\frac{1}{2} \leq v_p(a_p) \in \frac{1}{2} {\mathbb Z} \leq \frac{p-1}{2}$. 
  % Set $t = v(k-k_0)$. 
  Let $r = k-2$ and $r_0 = k_0-2$.
  Define two parameters:
  \begin{eqnarray*}
    \label{c}
     \tau =   v_p\left(\frac{a_p^2 - \binom{r-v_-}{v_+} \binom{r - v_+}{v_-} p^{r_0}}{p a_p}\right) & \text{and} &
      t  =  v_p(k - k_0),
 \end{eqnarray*}
 where $v_-$ and $v_+$ are the largest and smallest integers %not equal to $v$ %  \in \frac{1}{2} {\mathbb Z}$ 
 such that $v(a_p)$ lies in $(v_-, v_+)$. 
 Then, for all weights $k > k_0$  with $t$ sufficiently large,
  %except possibly some $r$ that are $p$-adically close to some small weights, 
  %and for all but possibly finitely many $a_p$, 
  %there is a rational     
  %parameter $\tau = v(c)$, for some $c = c(r, a_p)$, such that
  %\begin{eqnarray*}
  %\tau & = & v (c ), \\
  %    t  & = & v (b-r).
  %\end{eqnarray*}
 %Then, except for cases coming from small weights and local constancy, 
 the (semi-simplification of
 the) reduction $\bar{V}_{k,a_p}$ of the crystalline representation 
 $V_{k,a_p}$ on the inertia subgroup $I_{\qp}$ is given by:
 \begin{eqnarray*}
  \bar{V}_{k, a_p} |_{I _{\qp}} & \sim & 
  \begin{cases}
    \begin{array}{l}
      \mathrm{ind}(\omega_2^{r_0+1}),    
    \end{array}                                                              & \text{ if } \tau < t \\
    \begin{array}{l}
      %\mu_{\lambda_1} \cdot \omega^b\,\oplus\,\mu_{\lambda_1^{-1}} \cdot \omega, 
      \omega^{r_0}\,\oplus\,\omega, 
    \end{array}                                                              & \text{ if } \tau = t \\ 
    \begin{array}{l}
      \mathrm{ind}(\omega_2^{r_0+p}),     
    \end{array}                                                              & \text{ if } t < \tau < t+1 \\
    \begin{array}{l}
      %\mu_{\lambda_2} \cdot \omega^{b-1}\,\oplus\,\mu_{\lambda_2^{-1}} \cdot \omega^2, 
      \omega^{r_0-1}\,\oplus\, \omega^2, 
    \end{array}                                                              & \text{ if } \tau = t + 1 \\ 
    \begin{array}{l}
      \mathrm{ind}(\omega_2^{r_0+2p-1}),
    \end{array}                                                              & \text{ if } t +1  < \tau < t+2      \\
    \begin{array}{l} 
      %\mu_{\lambda_3} \cdot \omega^{b-2}\,\oplus\,\mu_{\lambda_3^{-1}} \cdot \omega^3, 
      \omega^{r_0-2}\,\oplus\,\omega^3, 
    \end{array}                                                              & \text{ if } \tau = t + 2\\
    \>\> \qquad\quad \vdots                                                                                                  &  \> \qquad \vdots \\
     \begin{array}{l}
      % \mu_{\lambda_n} \cdot \omega^{n+1}\,\oplus\,\mu_{\lambda_n^{-1}} \cdot \omega^n,  \\
       \mathrm{ind}(\omega_2^{r_0 + 1 + \frac{r_0-2}{2}(p-1) }), \\
        \omega^{\frac{r_0+2}{2}}\,\oplus\,\omega^{\frac{r_0}{2}},  \\     
      \mathrm{ind}(\omega_2^{r_0+1+\frac{r_0}{2}(p-1) }),                                                             
    \end{array} & 
    \begin{array}{l}
       \text{if } t + \frac{r_0-4}{2} < \tau < t + \frac{r_0-2}{2} \\
       \text{if } \tau = t + \frac{r_0-2}{2}  \\
       \text{if } \tau > t + \frac{r_0-2}{2}, 
    \end{array}
    \large{\Bigg\}}   \text{ and } r_0  \text{ is even},   \\[2pt]
    \qquad \qquad           \text{or}         &  \\[2pt]
    \begin{array}{l}
      % \mu_{\lambda_n} \cdot \omega^{n}\,\oplus\,\mu_{\lambda_n^{-1}} \cdot \omega^n,
       \mathrm{ind}(\omega_2^{r_0 + 1 + \frac{r_0-1}{2}(p-1) }),  \\
       \omega^{\frac{r_0 + 1}{2}} \,\oplus\, \omega^{\frac{r_0+1}{2}},
    \end{array}                                                              &
    \begin{array}{l}
      \text{if } t + \frac{r_0-3}{2} < \tau < t + \frac{r_0-1}{2} \\
      \text{if } \tau \geq t + \frac{r_0-1}{2},
    \end{array}
    \large{\Bigg\}}  \text{ and }  r_0 \text { is odd}.
    \end{cases} 
\end{eqnarray*}
\end{conjecture}

This conjecture has been verified for some small slopes (cf. \cite{BG13} for slope $1/2$, \cite{BGR18} for slope $1$ and \cite{GR} for slope $3/2$)
even with $t = 0$.\footnote{Although, the condition that $t$ is sufficiently large is required for larger slopes due to some numerical observations made by Rozensztajn.}

This ``crystalline" conjecture is intimately connected to the ``semi-stable'' results in \cite[Theorem 1.2]{BM} and \cite[Theorem 5.0.5]{GP}.
More precisely, the zig-zag conjecture and Theorem \ref{any l invariant} can be used to completely recover the description of the reductions of semi-stable
representations in these theorems when
$k \neq  2$,  and even (conjecturally) extend it to the cases $k =p+1$ and $k=p$, respectively,
at least on the inertia subgroup (see Theorem~\ref{thm from conj} below). Moreover, in the odd weight cases for which the zig-zag conjecture has been proved,
we obtain new information about the reductions of semi-stable representations on the full Galois group $\mathrm{Gal}(\brqp/\qp)$.

To elaborate further, let us set up some notation. For an integer $k$ in the interval $[3, p + 1]$, define $v_{-}$ and $v_{+}$ to be the largest and smallest integers,
respectively, such that $v_{-} < (k - 2)/2 < v_{+}$. For $l \geq 1$, let $$H_{l} = \sum_{i = 1}^{l}\frac{1}{i}$$ be the
$l$-th partial harmonic sum and set $H_0 = 0$. For convenience, write
$H_{-} = H_{v_-}$ %\sum_{i = 1}^{v_{-}}\frac{1}{i}$
and $H_{+} = H_{v_+}$. %\sum_{i = 1}^{v_{+}}\frac{1}{i}$. 
 For any $\sL$ in a finite extension of $\qp$, let
\[
 %  \nu = v_p\left(\sL - \sum_{i = 1}^{v_{-}}\frac{1}{i} - \sum_{i = 1}^{v_{+}}\frac{1}{i}\right)
    \nu = v_p (\sL - H_- - H_+)
\]
be the $p$-adic valuation %(normalized such that $v_p(p) = 1$)
of the $\sL$-invariant shifted by the partial harmonic sums $H_-$ and $H_+$.   
Let $\omega$ and $\omega_2$ denote the mod $p$ fundamental characters of levels $1$ and $2$, respectively, on the inertia group  $I_{\qp}$. The character $\omega$
may be thought of as a character of  $\mathrm{Gal}(\brqp/\qp)$; we choose an extension of
$\omega_2$ to $\mathrm{Gal}(\brqp/\mathbb{Q}_{p^2})$ such that 
% the reduction mod $p$ of the character $\epsilon_2 : \mathrm{Gal}(\brqp/\mathbb{Q}_{p^2}) \to \mathbb{Z}_{p^2}^*$ that is defined on the dense subgroup $\mathbb{Q}_{p^2}^*$ by sending $x$ to $x\norm{x}$, where $\mathbb{Q}_{p^2}$ is the unique unramified extension of $\qp$ of degree $2$ and $\mathbb{Z}_{p^2}$ is the ring of integers of $\mathbb{Q}_{p^2}$.
for any integer $c$ with $p + 1 \nmid c$, the representation $\mathrm{ind}(\omega_2^c)$ obtained by inducing $\omega_2^c$ from $\mathrm{Gal}(\brqp/\mathbb{Q}_{p^2})$ to $\mathrm{Gal}(\brqp/\qp)$ 
has determinant $\omega^c$. Let $\mu_{\lambda}$ be the unramified character of $\mathrm{Gal}(\brqp/\qp)$ mapping a geometric Frobenius
element at $p$ to $\lambda \in \br{\mathbb{F}}_p^*$ or $\brqp^*$.
Normalizing local class field theory by sending $p$ to a geometric Frobenius element at $p$,  %Later 
we obtain the character $\mu_\lambda$ of $\qp^*$ taking $p$ to $\lambda$ and $\zp^*$ to $1$ used in Section~\ref{computing L}.  %The symbol `$\sim$' means `isomorphic up to semi-simplification'.

Our second main theorem is the following:

\begin{theorem}\label{thm from conj}
If the zig-zag conjecture is true, then for any weight $k$ satisfying $3 \leq k \leq p + 1$, we have the following $(k-1)$-fold description
of the (semi-simplification of the) reduction of the semi-stable representation $V_{k, \sL}$ for $\sL \in
% \br{\mathbb Q}_p
{\mathbb P}^1(\br{\mathbb Q}_p)$
on the inertia subgroup:
\[
    \br{V}_{k, \sL}\vert_{I_{\qp}} \sim
    \begin{cases}
        \omega_2^{k - 1} \oplus \omega_2^{p(k - 1)}, & \text{if $\nu < 1 - \frac{k - 2}{2}$} \\[2pt]
        \omega^{k - 2} \oplus \omega, & \text{if $\nu = 1 - \frac{k - 2}{2}$} \\[2pt]
        \omega_2^{k - 2 + p} \oplus \omega_2^{p(k - 2) + 1}, & \text{if $1 - \frac{k - 2}{2} < \nu < 2 - \frac{k - 2}{2}$} \\[2pt]
        \omega^{k - 3} \oplus \omega^2, & \text{if $\nu = 2 - \frac{k - 2}{2}$} \\[2pt]
        \>\> \qquad \vdots & \qquad \vdots \\[2pt]

        \omega_2^{\frac{k}{2}+1 + p\left(\frac{k}{2} - 2\right)} \oplus \omega_2^{\frac{k}{2} - 2 + p\left( \frac{k}{2} + 1 \right)}, & \text{if $-1 < \nu <  0$} \\[-8pt]
        \omega^{\frac{k}{2}} \oplus \omega^{\frac{k - 2}{2}}, & \text{if $\nu = 0$}  \qquad \qquad {\Large{\Bigg{\}}}} \text{ and k is even,}\\[-8pt]
        \omega_2^{\frac{k}{2} + p\left(\frac{k}{2} - 1\right)} \oplus \omega_2^{\frac{k}{2} - 1 + p\frac{k}{2}}, & \text{if $\nu > 0$}, \\[2pt]
        \qquad \text{or} & \\[4pt]
        \omega_2^{\frac{k+1}{2} + p\left(\frac{k-3}{2}\right)} \oplus \omega_2^{\frac{k-3}{2} + p\left( \frac{k+1}{2}  \right)}, & \text{if $-\frac{1}{2} < \nu <  \frac{1}{2}$} \\[-16pt]
            &  \>\> \quad \qquad \qquad \qquad {\Large{\Bigg{\}}}} \text{ and $k$ is odd.} \\[-16pt]
        \omega^{\frac{k - 1}{2}} \oplus \omega^{\frac{k - 1}{2}}, & \text{if $\nu \geq \frac{1}{2}$}, 
  \end{cases}
\]
\end{theorem}

%\omega^{k/2} \oplus \omega^{(k - 2)/2}, & \text{if $\nu = 0$} \\[-5pt]
        %& \qquad \qquad \text{and k is even.}\\[-5pt]
        %\omega_2^{k/2 + p(k/2 - 1)} \oplus \omega_2^{k/2 - 1 + pk/2}, & \text{if $\nu > 0$},

\noindent {\bf Remark. } The theorem holds when $\sL = \infty$ even without assuming the zig-zag conjecture by the work of Fontaine and Edixhoven \cite{Edi}, since, by convention, $v_p(\infty) = -\infty$. \vspace{.1cm}

A generalization of the $\nu < 1 - \frac{k-2}{2}$ case of Theorem~\ref{thm from conj} has recently been proved for all weights $k \geq 4$
(and odd primes $p$) by Bergdall-Levin-Liu \cite[Theorem 1.1]{BLL22}. Note that the term $v_p((k-2)!)$ in their result vanishes in our setting.
%However, their proof concerns limits of points contained entirely in the exceptional divisor of the blow=up.

Also Theorem~\ref{thm from conj} above indeed matches with the results in \cite{BM} and \cite{GP} for $\sL \neq \infty$ and for $k \in [3,p-1]$: % We now check this for even and odd weights separately:
\begin{itemize}
    \item \underline{Even $k \in [3, p - 1]$}:
    In \cite[Theorem 1.2]{BM},  Breuil and M\'ezard have computed the reduction of $V_{k, \sL}$  in terms of the valuations $v_p(a)$ and $v_p(\sL)$, where
    \[
        a = (-1)^{k/2}\left(-1 + \frac{k}{2}\left(\frac{k}{2} - 1\right)(-\sL + 2H_{k/2 - 1})\right).
    \]
    % 0Note that $v_p(a) = \nu$. Indeed,
    Since $k \in [3, p - 1]$, we see that $v_p\left(\frac{k}{2}\left(\frac{k}{2} - 1\right)\right) = 0$, and therefore
\begin{eqnarray*}
    v_p(a) & = & v_p\left(\frac{-1}{(k/2)(k/2 - 1)} + (-\sL + 2H_{k/2 - 1}) \right) \\
    & = & v_p\left(\frac{1}{k/2} - \frac{1}{k/2 - 1} - \sL + 2\sum_{i = 1}^{k/2 - 1}\frac{1}{i}\right)  \\
    & = & v_p\left(-\sL + \sum_{i = 1}^{k/2}\frac{1}{i} + \sum_{i = 1}^{k/2 - 2}\frac{1}{i}\right)  \\
    & = & v_p(\sL - H_{k/2} - H_{k/2 - 2}) \\
    &  = &  \nu, 
\end{eqnarray*}
where we have used $k$ is even in the last equality. Now: % Thus, Theorem \ref{thm from conj} agrees with the reductions computed in \cite{BM}: % This can be checked as follows.
\begin{itemize}
    \item If $\nu > 0$, then Theorem \ref{thm from conj} yields $\br{V}_{k, \sL}\vert_{I_{\qp}} \sim \omega_2^{k/2 + p(k/2 - 1)} \oplus \omega_2^{k/2 - 1 + pk/2}$, which agrees with \cite[Theorem 1.2 (ii)]{BM}.
    \item If $\nu = 0$, then Theorem \ref{thm from conj} yields $\br{V}_{k, \sL}\vert_{I_{\qp}} \sim \omega^{k/2} \oplus \omega^{(k - 2)/2}$, which agrees with \cite[Theorem 1.2 (i)]{BM}.
    \item So assume $\nu < 0$. Then $\nu = v_p(\sL)$. If $\nu < 2 - k/2$, then Theorem \ref{thm from conj} yields $\br{V}_{k, \sL}\vert_{I_{\qp}} \sim \omega_2^{k - 1} \oplus \omega_2^{p(k - 1)}$, which agrees with \cite[Theorem 1.2 (iii)]{BM}. Now if $2 - k/2 \leq \nu < 0$ and $\nu \in \mathbb{Z}$, then \cite[Theorem 1.2 (iii)]{BM} yields $\br{V}_{k, \sL}\vert_{I_{\qp}} \sim \omega^{k/2 - \nu} \oplus \omega^{k/2 + \nu - 1}$, which is the same as the reduction computed in Theorem \ref{thm from conj}. Finally,  if $2 - k/2 \leq \nu < 0$ and $\nu \not \in \mathbb{Z}$, then \cite[Theorem 1.2 (iii)]{BM} yields $\br{V}_{k, \sL}\vert_{I_{\qp}} \sim \omega_2^{k/2 - \lfloor \nu \rfloor + p(k/2 + \lfloor \nu \rfloor  - 1)} \oplus \omega_2^{k/2 + \lfloor \nu \rfloor - 1 + p(k/2 - \lfloor \nu \rfloor)}$, which matches with the reduction computed in Theorem \ref{thm from conj}.
    \end{itemize}
    
    % We remark that for $k = 4$, we can compute the reductions on $\mathrm{Gal}(\brqp/\qp)$.
        \noindent {\bf Remark. }
    Since the zig-zag conjecture has already been proved for $p \geq 5$ and
    slope $1$ in \cite[Theorem 1.1]{BGR18}, even on $\mathrm{Gal}(\br{{\mathbb Q}}_p/\qp)$, we recover  the following result of Breuil and M\'ezard when $k = 4$ (cf. \cite[Theorem 1.2]{BM}):\footnote{The computation
\[
    \br{a}\br{\left(\frac{a_p}{p}\right)} = \br{(-1 + 2(-\sL + 2))}\br{\left(\frac{p}{p}\right)} = \br{-2 \left(\sL - \frac{3}{2} \right)}
\]
shows that the reduction agrees with the reduction computed in \cite[Theorem 1.2]{BM}.}
%    \begin{Proposition}
    if $p \geq 5$ and $k = 4$, then %we have the following trichotomy for the shape of the semi-simplification of the reduction mod $p$ of the semi-stable representation $V_{k, \sL}$, namely
    \[
        \br{V}_{k, \sL} \sim
        \begin{cases}
            \mathrm{ind}(\omega_2^{3}), & \text{if $\nu < 0$} \\[5pt]
            \mu_{\lambda}\>\omega^{2} \oplus \mu_{\lambda^{-1}}\>\omega, & \text{if $\nu = 0$} \\[5pt]
            \mathrm{ind}(\omega_2^{2 + p}), & \text{if $\nu > 0$},
        \end{cases}
    \]
    where 
    \[
        \lambda = \br{-2\left(\sL - \frac{3}{2}\right)}.
    \]
    If all nearby \'etale points in $\til{\sT}_2$ close to a given \'etale point, which has a lattice
    with non-split reduction, also have lattices with isomorphic reduction (an assumption which is stronger than the
      reductions being isomorphic up to semi-simplifcation, and which might follow
      by extending results of \cite{Che14} from pseudorepresentations to Galois representations),
      then one can read off more subtle information
      about whether the representation
      $\br{V}_{k,\sL}$ is peu or tr\`es ramifi\'ee (when it is reducible and $\lambda = \pm 1$) from the corresponding
      information of a sufficiently close
      crystalline representation $V_{k_n, a_n}$, %{\color{blue} (cf. \eqref{kn an} for the definition of $k_n$ and $a_n$)}
      which, in turn, is controlled by the size of $v_p(u_n-\varepsilon_n)$ in the notation of \cite[Theorem 1.3]{BGR18}.
      Indeed, in the middle case of the trichotomy above, we have $v_p(3-2\sL) = 0$, putting us in
      part (i) of \cite[Theorem 1.2]{BM}. Note that $\br{\frac{a_n}{p}} = 1$ and $\br{k_n-2} = 2$. There are now two
      cases to consider depending on whether $\varepsilon_n = \pm 1$.
      If $v_p(\sL - 2) = 0$, then a small check shows that $\varepsilon_n = 1$, so by
      \cite[Theorem 1.3 (1) (a)]{BGR18},  the reduction $\br{V}_{k,\sL}$ (without semi-simplification)
      is peu ramifi\'ee. If $v_p(\sL-2) > 0$, then another small check shows
      that $\varepsilon_n = -1$ and that $v_p(u_n-\varepsilon_n) = v_p(\sL-2)$. Thus, by \cite[Theorem 1.3 (1) (b)]{BGR18},
      the reduction $\br{V}_{k,\sL}$ (without semi-simplification) is
      peu ramifi\'ee if and only if $v_p(\sL-2) < 1$, at least if $\sL$ lies in an unramified extension
      of $\qp$. Both these conclusions are consistent with
      the corresponding conclusions in  \cite[Theorem 1.2 (i)]{BM}!
        
%\end{Proposition}
% This theorem is stronger than Theorem \ref{thm from conj} as it computes the reduction of the semi-stable representations not only on the inertia subgroup, but also on the entire Galois group.

\item \underline{Odd $k \in [3, p -1]$}:
 In \cite[Theorem 5.0.5]{GP}, Guerberoff and Park compute the reduction of $V_{k, \sL}$ in terms of the valuation $v_p(\sL - a(k - 1))$, where
\[
  a(j) = H_{j/2} + H_{j/2 - 1},
\]
for $j \geq 1$.
% Theorem \ref{thm from conj} classifies the reductions using $\nu = v_p(\sL - H_{-} - H_{+})$.
We clearly have $a(k - 1) = H_{-} + H_{+}$. Therefore, the regions used to classify the reductions in Theorem \ref{thm from conj} match with those used in \cite[Theorem 5.0.5]{GP}. Now: % Furthermore, Theorem \ref{thm from conj} agrees with the reductions computed in \cite{GP}:
\begin{itemize}
    \item If $\nu < 1 - (k - 2)/2$, then Theorem \ref{thm from conj} yields $\br{V}_{k, \sL}\vert_{I_{\qp}} \sim \omega_2^{k - 1} \oplus \omega_2^{p(k - 1)}$, which agrees with the reduction computed in \cite[Theorem 5.0.5 (2)]{GP}.
    \item Assume $-1/2 - l < \nu < 1/2 - l$ for some $l \in \{0, 1, \cdots, (k - 5)/2\}$. This region can be written as $(- l + (k - 3)/2) - (k - 2)/2 < \nu < (-l + (k - 1)/2) - (k - 2)/2$. Theorem \ref{thm from conj} yields $\br{V}_{k, \sL}\vert_{I_{\qp}} \sim \omega_2^{(k - 1) + (p - 1)(-l + (k - 3)/2)} \oplus \omega_2^{p((k - 1) + (p - 1)(-l + (k - 3)/2))}$, which agrees with the reduction computed in \cite[Theorem 5.0.5 (1)]{GP}.
    \item Assume $\nu = -1/2 - l$ for some $l \in \{0, 1, \cdots, (k - 5)/2\}$. This can be written as $\nu = (-l + (k - 3)/2) - (k - 2)/2$. Theorem \ref{thm from conj} yields $\br{V}_{k, \sL}\vert_{I_{\qp}} \sim \omega^{k - 1 - (-l + (k - 3)/2)} \oplus \omega^{(-l + (k - 3)/2)}$, which agrees with the reduction computed in \cite[Theorem 5.0.5]{GP}.
    \item Finally, if $\nu \geq 1/2$, then Theorem \ref{thm from conj} yields $\br{V}_{k, \sL}\vert_{I_{\qp}} \sim \omega^{(k - 1)/2}\oplus \omega^{(k - 1)/2}$, which agrees with the reduction computed in \cite[Theorem 5.0.5]{GP}.
\end{itemize}

For the small weights $k = 3$ and $5$, we may in fact improve on \cite[Theorem 5.0.5]{GP} by computing the reductions $\br{V}_{k, \sL}$ on the full
Galois group $\mathrm{Gal}(\brqp/\qp)$ (by giving formulas for $\lambda$). 
Indeed, since zig-zag has been proved for slopes $1/2$ and $3/2$ in \cite[Theorem A]{BG13} and \cite[Theorem 1.1]{GR}, respectively,
we obtain the following theorems.

   \begin{theorem}
     \label{theorem k=3}
        Let $p \geq 3$ and $k = 3$. We have the following dichotomy for the shape of the semi-simplification of the reduction of the semi-stable representation $V_{k, \sL}$:
        \[
            \br{V}_{k, \sL} \sim
            \begin{cases}
                    \mathrm{ind}(\omega_2^{2}), & \text{if $\nu < 1/2$} \\
                    \mu_{\lambda}\omega \oplus \mu_{\lambda^{-1}}\omega, & \text{if $\nu \geq 1/2$},
            \end{cases}
        \]
        where
        \begin{eqnarray}
            \lambda + \frac{1}{\lambda} & = & \br{-p^{-1/2}(\sL - 1)}.\nonumber
        \end{eqnarray}
    \end{theorem}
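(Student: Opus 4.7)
\textit{Proof plan.} The strategy is to reduce the semi-stable computation to an already-known crystalline computation via Theorem~\ref{any l invariant} and then extract the explicit parameter $\lambda$ by passing to the limit. Set $r=k-2=1$, so $r/2=1/2$. For $\sL\neq\infty$ apply Theorem~\ref{any l invariant} with the sequence
\[
  (k_n,a_n)=\bigl(3+p^n(p-1),\; p^{1/2}(1+\sL p^n(p-1)/2)\bigr),
\]
and for $\sL=\infty$ apply it with the second sequence in \eqref{kn an}. In either case the crystalline representations $V^*_{k_n,a_n}$ converge to $V^*_{3,\sL}$ in $\til{\sT}_2$. Combining this with \cite[Proposition 3.9]{Che} and the remark that connected families in $\til{\sT}_2$ have isomorphic semi-simplified reductions (and that reduction commutes with passage to the contragredient), I get $\br{V}_{3,\sL}^{\mathrm{ss}}\cong \br{V}_{k_n,a_n}^{\mathrm{ss}}$ for all sufficiently large $n$.

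Next I analyze the crystalline side. By construction $v_p(a_n)=1/2$ and $k_n\equiv 3\pmod{p-1}=2\cdot(1/2)+2\pmod{p-1}$, so each weight $k_n$ is \emph{exceptional} in the sense of the remark following Theorem~\ref{any l invariant}. Consequently the reduction of $V_{k_n,a_n}$ is governed by the zig-zag conjecture at slope $1/2$, which is a theorem of Bhattacharya--Ghate \cite[Theorem A]{BG13}. Applying that theorem to each $V_{k_n,a_n}$ produces a dichotomy: either $\br{V}_{k_n,a_n}\cong\mathrm{ind}(\omega_2^{2})$, or $\br{V}_{k_n,a_n}\cong \mu_{\lambda_n}\omega\oplus\mu_{\lambda_n^{-1}}\omega$, where $\lambda_n$ is an explicit function of $(k_n,a_n)$ recorded in \cite{BG13}, expressible as a residue of a certain normalized quantity $t_n=t(k_n,a_n)\in\br{\mathbb Q}_p$ built from $a_n/p^{1/2}$ and the weight. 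The separation between the two cases is determined by whether $v_p(t_n)>0$ or $v_p(t_n)=0$ (and in the reducible case $\lambda_n+\lambda_n^{-1}=\br{t_n}$).

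The heart of the argument — and the step I expect to be the main obstacle — is the bookkeeping that translates the crystalline parameter $t_n$ into a $p$-adic expansion involving $\sL$, and then verifies that the limit matches the prescribed formula. Concretely, one substitutes $a_n=p^{1/2}(1+\sL p^n(p-1)/2)$ into the slope-$1/2$ parameter from \cite{BG13}, expands modulo a sufficiently high power of $p$ using the congruence $k_n\equiv k\pmod{p^n(p-1)}$, and shows that for $n$ large the leading term is precisely $-p^{-1/2}(\sL-1)$ (the shift by $1$ arising from $H_-+H_+=H_0+H_1=1$ for $k=3$). From this:
\begin{itemize}
\item if $\nu=v_p(\sL-1)<1/2$, then $v_p(t_n)<0$ for $n\gg 0$, landing in the irreducible case, and \cite[Theorem A]{BG13} yields $\br{V}_{k_n,a_n}\cong\mathrm{ind}(\omega_2^{2})$;
\item if $\nu\geq 1/2$, then $v_p(t_n)\geq 0$ stably, landing in the reducible case, and $\br{t_n}$ stabilizes to $\br{-p^{-1/2}(\sL-1)}$, giving $\lambda+\lambda^{-1}=\br{-p^{-1/2}(\sL-1)}$.
\end{itemize}

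Finally, the case $\sL=\infty$ is subsumed either by the convention $v_p(\infty)=-\infty$ (placing it unconditionally in the first branch) together with the alternative sequence of Theorem~\ref{any l invariant}, or directly from the Fontaine--Edixhoven computation \cite{Edi} applied to $V_{3,\,p^{3/2}+p^{1/2}}$. In all cases the dichotomy of the theorem follows. The conceptual difficulty is purely the limit-parameter matching in the middle paragraph; the rest is assembly of Theorem~\ref{any l invariant}, Chenevier's local constancy, and the established slope-$1/2$ zig-zag.
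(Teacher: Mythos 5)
Your proposal follows essentially the same route as the paper: apply Theorem~\ref{any l invariant} to produce a sequence $V^*_{k_n,a_n}$ of crystalline representations converging to $V^*_{3,\sL}$, note that the slope $v_p(a_n)=1/2$ makes $k_n$ exceptional, invoke Chenevier's local constancy in $\til{\sT}_2$ to identify semi-simplified reductions, and then use \cite[Theorem A]{BG13} (the proved slope-$1/2$ case of zig-zag) together with the harmonic-sum bookkeeping of Section~\ref{harmonic sums are here} (where $H_-+H_+=H_0+H_1=1$ and $\tau_n-t_n=-1/2+\nu$ for $k=3$) to translate the BG13 parameter into the stated condition on $\nu$ and the formula for $\lambda$. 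This is precisely how the paper derives Theorem~\ref{theorem k=3}; the only part you defer — the explicit substitution into the BG13 normalization to produce the coefficient $-p^{-1/2}$ — is likewise left to the reader by the paper, which states the theorem as a direct consequence of zig-zag at slope $1/2$.
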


    \begin{theorem}
       \label{theorem k = 5}
        Let $p \geq 5$ and $k = 5$. We have the following tetrachotomy for the shape of the semi-simplification of the reduction  of the semi-stable representation $V_{k, \sL}$:
        \[
            \br{V}_{k, \sL} \sim
            \begin{cases}
                    \mathrm{ind}(\omega_2^{4}), & \text{if $\nu < -1/2$} \\[5pt]
                    \mu_{\lambda_1}\omega^3 \oplus \mu_{\lambda_1^{-1}}\omega, & \text{if $\nu = -1/2$} \\[5pt]
                    \mathrm{ind}(\omega_2^{3 + p}), & \text{if $-1/2 < \nu < 1/2$} \\[5pt]
                    \mu_{\lambda_2}\omega^{2} \oplus \mu_{\lambda_2^{-1}}\omega^2, & \text{if $\nu \geq 1/2$},
            \end{cases}
        \]
        where the constants $\lambda_i$ are given by
        \begin{eqnarray}
            \lambda_1 & = & \br{-3p^{1/2}\left(\sL - \frac{5}{2}\right)}, \nonumber \\
            \lambda_2 + \frac{1}{\lambda_2} & = & \br{2p^{-1/2}\left(\sL - \frac{5}{2}\right)}. \nonumber
        \end{eqnarray}
    \end{theorem}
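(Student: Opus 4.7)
The plan is to deduce Theorem~\ref{theorem k = 5} by specializing Theorem~\ref{any l invariant} to $k = 5$ and invoking the slope-$3/2$ case of the zig-zag conjecture, which is known unconditionally on the full Galois group $\mathrm{Gal}(\brqp/\qp)$ by \cite[Theorem~1.1]{GR}. The bridge between these is the family argument of Chenevier: by \cite[Proposition~3.9]{Che} and the discussion on \cite[p.~1513]{Che}, the semi-simplification of the mod $p$ reduction is locally constant on $\til{\sT}_2$, so if a sequence of crystalline representations converges there to a semi-stable representation, the semi-simplifications of their reductions eventually agree. Dualizing (using that reduction commutes with duality of lattices) transfers such an identification from $V^*$ to $V$.

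Concretely, I take $k = 5$, so $r = 3$, and (for $\sL \neq \infty$) the approximating sequence from Theorem~\ref{any l invariant} is $(k_n, a_n) = (5 + p^n(p - 1), \, p^{3/2}(1 + \sL p^n(p - 1)/2))$, with the alternative formula from \eqref{kn an} used for $\sL = \infty$. For $n$ large, $v_p(a_n) = 3/2$ and $k_n - 2 \equiv 3 \pmod{p - 1}$, so $k_n$ is exceptional of slope $3/2$, which is exactly the regime handled by \cite{GR}. Since $V^*_{k_n, a_n} \to V^*_{k, \sL}$ in $\til{\sT}_2$, the family argument yields $\br{V}_{k, \sL} \sim \br{V}_{k_n, a_n}$ as semisimple $\mathrm{Gal}(\brqp/\qp)$-representations for all sufficiently large $n$. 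A direct computation, using $H_- + H_+ = H_1 + H_2 = 5/2$ and $\nu = v_p(\sL - 5/2)$, identifies the four regions appearing in \cite[Theorem~1.1]{GR} evaluated at $(k_n, a_n)$ with the four regions $\nu < -1/2$, $\nu = -1/2$, $-1/2 < \nu < 1/2$, and $\nu \geq 1/2$ of the present theorem. The resulting shapes are $\mathrm{ind}(\omega_2^4)$, $\mu_{\lambda_1}\omega^3 \oplus \mu_{\lambda_1^{-1}}\omega$, $\mathrm{ind}(\omega_2^{3 + p})$, and $\mu_{\lambda_2}\omega^2 \oplus \mu_{\lambda_2^{-1}}\omega^2$, matching the statement on inertia with the description already provided by Theorem~\ref{thm from conj}. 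The case $\sL = \infty$ lands in the first regime since $v_p(\infty) = -\infty$ by convention, consistent with the crystalline description of $V_{k, \infty}$.

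The formulas for $\lambda_1$ and $\lambda_2$ are then extracted by specializing the explicit unramified twists from \cite[Theorem~1.1]{GR}: one substitutes $a_n = p^{3/2}(1 + \sL p^n(p - 1)/2)$ into the scalar governing the unramified character in the reducible cases, expands in powers of $p$, and reduces modulo the maximal ideal. I expect this last step to be the main obstacle. The cancellations required to produce the clean expressions $\lambda_1 = \br{-3 p^{1/2}(\sL - 5/2)}$ and $\lambda_2 + \lambda_2^{-1} = \br{2 p^{-1/2}(\sL - 5/2)}$ depend on how the Wach- or Breuil-module parameter controlling the zig-zag reduction is normalized against the $p$-adic Hodge theoretic data of $V_{k_n, a_n}$, and the numerical factors $-3$, $2$ together with the half-integral powers of $p$ must be tracked through this normalization with care, especially at the transitional values $\nu = -1/2$ and $\nu = 1/2$. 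A footnote-style computation analogous to the one carried out for $k = 4$ earlier in the introduction can then be used to verify consistency with \cite[Theorem~5.0.5]{GP} on inertia.
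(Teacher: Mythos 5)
Your proposal is correct and follows essentially the same route the paper takes: Theorem~\ref{any l invariant} with $k=5$, $r=3$ gives a sequence of exceptional-weight slope-$3/2$ crystalline representations converging to $V^*_{5,\sL}$ in $\til{\sT}_2$, Chenevier's local constancy of semi-simplified reductions plus compatibility of reduction with duality transfers the computation to $V_{k_n,a_n}$, and the proved slope-$3/2$ zig-zag case \cite[Theorem 1.1]{GR} then gives the full Galois-group answer, with the region translation $H_- + H_+ = 5/2$, $\nu = v_p(\sL-5/2)$, $\tau_n - t_n = 1/2 + \nu$ exactly as in Section~\ref{harmonic sums are here}. The paper likewise leaves the numerical extraction of $\lambda_1$ and $\lambda_2$ from the unramified twists in \cite[Theorem 1.1]{GR} implicit, so your flagging of that substitution as the remaining bookkeeping step is accurate rather than a gap.
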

\end{itemize}

\noindent {\bf Remark.} We remark that recently the second author noticed that it is possible to reverse the arguments used to
prove Theorem~\ref{thm from conj} to give a {\it proof} of the zig-zag conjecture, at least on inertia and for {\it all}
half-integral slopes $0 < v_p(a_p) \leq \frac{p-3}{2}$ \cite[v1]{Gha22} (these restrictions can be removed \cite[v2]{Gha22}
using very recent work of the first two authors \cite{CG23} which computes $\br{V}_{k,\sL}$ directly on $\mathrm{Gal}(\brqp/\qp)$
using the Iwahori mod $p$ Local Langlands Correspondence \cite{Chi23}).

\subsection{Relation with local constancy}
In \cite{Ber}, Berger proved the following theorem on the local constancy in the weight
of the semi-simplification of the reduction of crystalline representations.
\begin{theorem}[{\cite[Theorem B]{Ber}}]\label{Berger's Theorem B}
    Let $a_p \not = 0$ and $k > 3v_p(a_p) + \alpha(k - 1) + 1$, where $\alpha(j) = \sum_{n \geq 1}\left\lfloor\frac{j}{p^{n - 1}(p - 1)}\right\rfloor$. Then there exists $m = m(k, a_p)$ such that $\br{V}_{k', a_p} = \br{V}_{k, a_p}$, if $k' \geq k$ and $k' - k \in p^{m - 1}(p - 1)\mathbb{Z}$.
\end{theorem}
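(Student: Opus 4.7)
The plan is to use the theory of Wach modules. To any Galois-stable lattice $T_{k,a_p} \subset V_{k,a_p}$ one attaches a Wach module $N(T_{k,a_p})$ over $\mathbf{A}_{\qp}^{+}$ carrying commuting semilinear actions of $\varphi$ and $\Gamma$ that recover both the filtered $\varphi$-module $D_{k,a_p}$ and the lattice $T_{k,a_p}$. The first step is to construct $N(T_{k,a_p})$ in a convenient basis, using the hypothesis $k > 3v_p(a_p) + \alpha(k-1) + 1$ to bound the $p$-adic denominators appearing in the matrices of $\varphi$ and $\gamma \in \Gamma$. The function $\alpha(k-1)$ is precisely the denominator that arises from iterating the $\Gamma$-action against the differential operator needed to reach Hodge-Tate weight $k-1$, while the term $3v_p(a_p)$ accounts for the denominators forced by the slope of $V_{k,a_p}$.

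From these bounds one extracts a constant $M = M(k, v_p(a_p))$ enjoying the following semicontinuity property: two rank-two Wach modules with the same filtered $\varphi$-data whose $\Gamma$-actions agree modulo $p^{M}$ produce stable lattices with isomorphic mod $p$ reductions as $\mathrm{Gal}(\brqp/\qp)$-representations. Applied to the pair $V_{k,a_p}$ and $V_{k',a_p}$ with $k' - k \in p^{m-1}(p-1)\mathbb{Z}$, the underlying $\varphi$-module is unchanged since $a_p$ is the same, so the only difference lies in the $\Gamma$-action through the characters $\chi^{k-1}$ and $\chi^{k'-1}$. Since the subgroup $1 + p^{m}\zp$ has index $p^{m-1}(p-1)$ in $\zp^{*}$, these two characters agree modulo $p^{m}$ precisely under the given divisibility hypothesis, and taking $m$ such that $p^{m} \geq p^{M}$ yields the claimed congruence of reductions.

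The hardest step will be the extraction of the explicit constant $M$, which requires a precise tracking of how denominators propagate through the Wach functor and through its inverse that recovers $T/pT$ from $N(T)/pN(T)$. This is the source of both the $3v_p(a_p)$ and $\alpha(k-1)$ terms in the hypothesis. A secondary subtlety is that $k'$ is unbounded above, so the Wach module $N(T_{k',a_p})$ cannot be compared with $N(T_{k,a_p})$ term by term; one must either pass to a universal family of Wach modules parameterized by the weight, or twist by a suitable power of the cyclotomic character so that the comparison takes place at a single bounded weight after dividing out by the twist.
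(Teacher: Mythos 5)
This statement is not proved in the paper you were given: it is quoted verbatim as \cite[Theorem B]{Ber} and used as a black box in the discussion of local constancy. There is therefore no internal proof to compare your attempt against.

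That said, your proposal does track the strategy Berger actually uses in the cited paper (explicit Wach modules over $\mathbf{A}^+_{\mathbb{Q}_p}$, control of denominators in the $\varphi$- and $\Gamma$-matrices, and a semicontinuity statement for reductions), so the overall route is sound. One concrete imprecision worth flagging: you assert that because $a_p$ is held fixed, ``the underlying $\varphi$-module is unchanged, so the only difference lies in the $\Gamma$-action through $\chi^{k-1}$ and $\chi^{k'-1}$.'' This is not correct. Already at the level of filtered $\varphi$-modules, $D_{k,a_p}$ and $D_{k',a_p}$ differ: $\varphi(e_1)=p^{k-1}e_2$ versus $p^{k'-1}e_2$, and the filtration jumps at a different index. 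At the Wach-module level the matrix of $\varphi$ involves a factor like $q^{k-1}$ with $q=\varphi(\pi)/\pi$, so the weight enters the $\varphi$-matrix directly, not only through the cyclotomic twist of $\Gamma$. The correct mechanism in Berger's argument is a congruence between the $\varphi$-matrices modulo a suitable power of $p$ that is valid when $k'\equiv k \bmod p^{m-1}(p-1)$ and the denominator bound $k > 3v_p(a_p)+\alpha(k-1)+1$ holds; the untwisting/universal-family remark you make at the end gestures in the right direction, but the claim that only the $\Gamma$-action changes should be withdrawn and replaced by a congruence argument on both $\varphi$ and $\Gamma$.
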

This local constancy result does not extend to small weights $k$ in the sense that the bound in the theorem
is sharp. This was noticed in \cite{Gha} using the following examples.
\begin{itemize}
    \item Let $k = 4$ and $a_p = p \geq 5$. Then $4 \not > 3(1) + 1$ and the crystalline representation $V_{k, a_p}$ does not satisfy Berger's bound on the weight. Now let $k' = 4 + p^n(p - 1)$, for large $n$. If the above local constancy result were to hold for $V_{k, a_p}$, then we would have $\br{V}_{k', a_p} \sim \br{V}_{k, a_p}$. However, $\br{V}_{k, a_p}$ is irreducible as $k$ belongs to the range $[2, p + 1]$ treated by Fontaine and Edixhoven \cite{Edi}, but $\br{V}_{k', a_p} \sim \mu_3\>\omega^2 \oplus \mu_{3^{-1}}\>\omega$ is reducible by \cite[Theorem 1.1]{BGR18}.
    \item Let $k = 5$ and $a_p = p^{3/2}$, for $p \geq 7$. Then $5 \not > 3(1.5) + 1$ and the crystalline representation $V_{k, a_p}$ again does not satisfy the bound on the weight. Now let $k' = 5 + p^n(p - 1)$, for large $n$. If the above local constancy result were to hold for $V_{k, a_p}$, then we would have $\br{V}_{k', a_p} \sim \br{V}_{k, a_p}$. However, $\br{V}_{k, a_p} \sim \mathrm{ind}(\omega_2^4)$ since $k$ belongs to the Fontaine-Edixhoven range, but $\br{V}_{k', a_p} \sim \mathrm{ind}(\omega_2^{3 + p})$ by \cite[Theorem 1.1]{GR}.
\end{itemize}

% This failure of the na\"ive generalization of
However, %Berger's local constancy result can be generalized to the small weights above if one uses the correct (non-crystalline) limit provided by
we have the following
corollary to Theorem \ref{any l invariant}.
%To see this, we note the following corollary to Theorem~\ref{any l invariant}.

\begin{Corollary}\label{Main}
    For $k \geq 3$, the sequence of crystalline representations $V^*_{k + p^n(p - 1), p^{r/2}}$ converges to the semi-stable representation $V^*_{k, \sL}$ for $\sL = 0$.
\end{Corollary}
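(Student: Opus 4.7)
The plan is to read off Corollary \ref{Main} as the specialization of Theorem \ref{any l invariant} at the $\sL$-invariant $\sL = 0$. Since $0 \neq \infty$, we fall into the first branch of the piecewise definition (\ref{kn an}), so the prescribed sequence is
\[
(k_n, a_n) \;=\; \bigl(k + p^n(p-1),\ p^{r/2}\bigl(1 + \sL\, p^n(p-1)/2\bigr)\bigr).
\]
Substituting $\sL = 0$ collapses the second factor, yielding $a_n = p^{r/2}$ and $k_n = k + p^n(p-1)$, which is precisely the sequence appearing in the statement of the corollary.

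Thus, after verifying that $\sL = 0$ is an admissible $\sL$-invariant in $\mathbb{P}^1(E)$ (it lies in $E$ for any finite extension $E/\qp$, so there is no edge-case subtlety with the point at infinity to worry about), Theorem \ref{any l invariant} immediately gives that $V^*_{k_n, a_n} = V^*_{k + p^n(p-1),\ p^{r/2}}$ converges in $\til{\sT}_2$ to $V^*_{k, 0}$, which is exactly the semi-stable representation with $\sL$-invariant $0$ in the sense of the convention fixed in the introduction.

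There is no main obstacle: the corollary is simply the $\sL = 0$ slice of Theorem \ref{any l invariant}, included separately because this particular non-crystalline limit is the one required for the local-constancy rescue discussed just before the statement, where the ambient crystalline $a_p$ along the sequence is held constant at $p^{r/2}$ while the weight varies along the arithmetic progression $k + p^n(p-1)$.
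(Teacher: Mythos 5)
Your proof is correct and matches the paper's: the corollary is obtained by substituting $\sL = 0$ into the first branch of \eqref{kn an} in Theorem \ref{any l invariant}, which collapses $a_n$ to $p^{r/2}$ while leaving $k_n = k + p^n(p-1)$. The paper gives no separate proof for this corollary precisely because it is this immediate specialization.
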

\noindent %The corollary allows us to explain the failure of na\"ive local constancy for the crystalline representations in the examples above. Indeed, if
% Indeed, since
Therefore, for large $n$ the reductions of the crystalline representations $V_{k+p^n(p-1),p^{r/2}}$ in the examples above %for $k' = k + p^n(p-1)$
should be isomorphic
to the reduction of the semi-stable representation $V_{k,\sL}$ with $\sL = 0$, and not necessarily to
the reduction of the crystalline representation $V_{k,p^{r/2}}$ (though all may be the same, as is the case when $k = 3$). Indeed, one checks that
\begin{itemize}
\item if $k = 4$ and $p \geq 5$, then %by \cite[Theorem 1.1]{BGR18}, the reduction of $V_{4 + p^n(p - 1), p}$ is $\mu_3\>\omega^2 \oplus \mu_{3^{-1}}\>\omega$ when $n$ is large,
     by \cite[Theorem 1.2]{BM},
     the reduction of the semi-stable representation $V_{4, \sL}$ for $\sL = 0$ is $\mu_3\>\omega^2 \oplus  \mu_{3^{-1}}\>\omega$, and,
   \item if $k = 5$ and $p \geq 7$, then % by \cite[Theorem 1.1]{GR},  the reduction of $V_{5 + p^n(p - 1), p^{3/2}}$ for large $n$ is $\mathrm{ind}(\omega_2^{3 + p})$, or on inertia, $\omega_2^{3 + p}\oplus \omega_2^{1 + 3p}$, and
     by \cite[Theorem 5.0.5]{GP} (or, better, by Theorem~\ref{theorem k = 5}), the reduction of the semi-stable representation $V_{5, \sL}$ for $\sL = 0$ is
     $\mathrm{ind}(\omega_2^{3 + p})$.
     %$\omega_2^{1 + 3p} \oplus \omega_2^{3 + p}$.
    \end{itemize}
    Thus, there is no apparent contradiction to local constancy in the weight for the reductions of the crystalline representations above when $k$ is small if one 
    works in the Colmez-Chenevier space which includes semi-stable representations of weight $k$.

\section{The blow-up of $\U$}
  \label{blow-up}

In this section, we provide details about blow-ups in the rigid analytic setting which may be of independent
interest. This is inspired by the work of Schoutens (cf. \cite{Sch}), but we carefully work out the details and also do not work over algebraically
closed fields. For background on rigid analytic geometry, see \cite{BGR84}.
We also recall the important Proposition $\ref{fiber and tangents}$, which for each
finite extension $E$ of $\qp$ establishes a bijection between the $E$-valued points of $\tU$ that lie above $(p^r, (1 + p)^{k - 1} - 1)$ and the tangent directions in $\U$ at
the exceptional point. 

\subsection{The blow-up as a  rigid analytic variety}\label{L-valued points here}

The blow-up of $\U$ consists of a rigid analytic variety $\tU$ and a map $\pi : \tU \to \U$ satisfying the properties given in \cite[Definition 1.2.1]{Sch}. In this subsection, we will construct $\tU$.

Recall that $\U = \text{Sp }\co(\U)$, where
\[
    \co(\U) = \qp \langle S_1, S_2, T_1, T_2, T_3 \rangle /(p^rT_1 - S_1, 1 - T_1T_2, pT_3 - S_2).
\]
The $\qp$-valued point $(p^r, (1 + p)^{k - 1} - 1)$ corresponds to the maximal ideal $m = (f_1, f_2)$ of $\co(\U)$, where $f_1 = S_1 - p^r$ and $f_2 = S_2 - ((1 + p)^{k - 1}-1)$. By the blow-up of $\U$ at $(p^r, (1 + p)^{k - 1} - 1)$, we mean the blow-up at the maximal ideal $m$.
%Let $f_1 = S_1 - p^r$ and $f_2 = 1 + S_2 - (1+p)^{r+1}$. The common vanishing set of these functions is the maximal ideal $m = (f_1, f_2)$, which corresponds to the $\qp$-valued point $(p^r, (1 + p)^{r + 1} - 1)$. 

We will construct $\tU$ by a patching argument. Let $Q_1, Q_2, Q_1'$ and $Q_2'$ be indeterminates. For $i = 0, 1, \cdots$, consider the affinoid algebra
\[
    A_i = \co(\U)\langle p^iQ_1 \rangle/(f_2 - Q_1f_1),
\]
which describes the subset of $\U$ where $\norm{f_2} \leq p^i\norm{f_1}$. Sending  $p^{i+1}Q_1$ to $p \cdot p^{i}Q_1$ and $X$ to $p^iQ_1$ induces an isomorphism $A_{i + 1}\langle X \rangle/(pX - p^{i + 1}Q_1) \cong A_i$, showing that $\text{Sp }A_i$ is the affinoid subdomain (hence an open subvariety) of $\text{Sp }A_{i + 1}$ %Here we consider $\text{Sp }A_{i}$ as a Weierstrass subdomain of $\text{Sp }A_{i + 1}$
consisting of maximal ideals $\gm$ such that $\norm{Q_1 \mod{\gm}} \leq p^i$.
%Since we have an isomorphism $A_{i + 1}\langle X \rangle/(pX - p^{i + 1}Q_1) \cong A_i$ (that maps $X$ to $p^i Q_1$), the inclusion $A_{i + 1} \to A_i$ corresponds to a map $\text{Sp }A_i \to \text{Sp }A_{i + 1}$ identifying $\text{Sp }A_i$ as an affinoid subdomain (and hence as an open subvariety) of $\text{Sp }A_{i + 1}$. 
We therefore get a sequence of inclusions $\text{Sp }A_0 \to \text{Sp }A_1 \to \text{Sp }A_2 \to \cdots$ associated to the sequence of affinoid algebra homomorphisms $\cdots \to A_2 \to A_1 \to A_0$. Using \cite[Proposition $9.3.2/1$]{BGR84} we paste together the $\text{Sp }A_i$ for $i \geq 0$ to get a rigid analytic variety $\til{V}_1$. 
The same proposition also states that $\{\text{Sp }A_i\}_{i \geq 0}$ is an admissible cover of $\til{V}_1$. 
Similarly, for $i \geq 0$, we define
\[
    B_i = \co(\U) \langle p^iQ_2 \rangle/(f_1 - Q_2f_2)
\]
and glue the corresponding affinoid spaces to get a rigid analytic variety $\til{V}_2$. The blow-up $\tU$ is the rigid analytic variety obtained by
% further
glueing $\til{V}_1$ and $\til{V}_2$ along certain open subvarieties. 

We describe these open subvarieties now. For each $i \geq 0$, consider the affinoid algebra
\[
    A_i' = \co(\U)\langle p^iQ_1, p^iQ_1'\rangle/(f_2 - Q_1f_1, 1 - Q_1Q_1'),
\]
which describes a Laurent subdomain $\text{Sp }A_i'$ of $\text{Sp }A_i$ given by the condition $\norm{Q_1} \geq p^{-i}$.  Since we have an isomorphism $A_{i + 1}'\langle X, Y\rangle/(pX - p^{i + 1}Q_1, pY - p^{i + 1}Q_1') \cong A_i'$ (sending $X$ to $p^{i}Q_1$ and $Y$ to $p^{i}Q_1'$), we see that $\text{Sp }A_{i}'$ is an affinoid subdomain (and hence an open subvariety) of $\text{Sp }A_{i + 1}'$. Using \cite[Proposition $9.3.2/1$]{BGR84}, we paste together $\text{Sp }A_i'$ for $i \geq 0$ to get a rigid analytic variety $\til{V}_1'$. Moreover, using \cite[Proposition $9.3.3/1$]{BGR84}, we see that the canonical inclusions $\text{Sp }A_i' \to \text{Sp }A_i$ induce an inclusion $\til{V}_1' \xhookrightarrow{} \til{V}_1$, identifying $\til{V}_1'$ as a subvariety of $\til{V}_1$. Similarly, we construct a subvariety $\til{V}_2'$ of $\til{V}_2$ by glueing all the $\text{Sp }B_i'$ together, where
\[
    B_i' = \co(\U)\langle p^iQ_2, p^iQ_2' \rangle/(f_1 - Q_2f_2, 1 - Q_2Q_2').
  \]
It turns out that $\text{Sp }A_i'$ and $\text{Sp }B_i'$ are the intersections of $\text{Sp }A_i$ and $\text{Sp }B_i$ in the blow-up $\tU$ that we will soon construct. 
The spaces above are summarized by the following picture:
\vspace{.22cm}
\begin{center}
\includegraphics[scale=0.6]{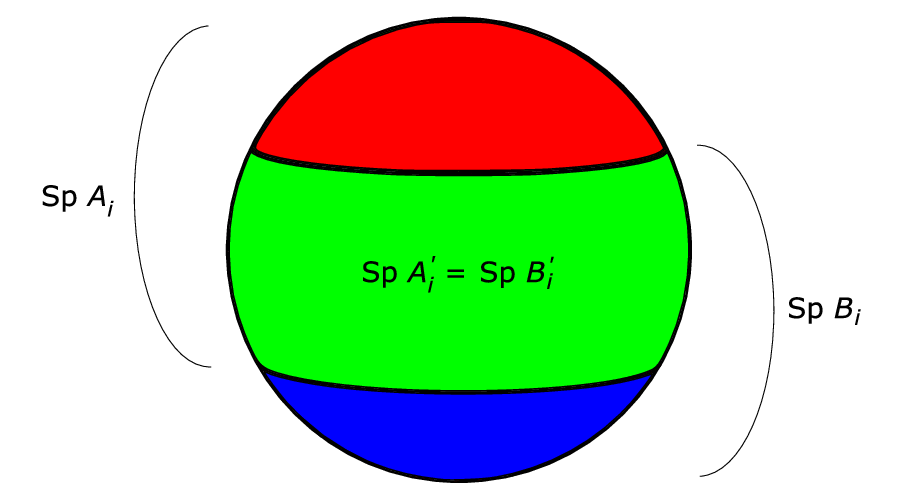}
\captionof{figure}{Exceptional fiber of the blow-up in the rigid setting.}
\end{center}

The space $\til{V}_1$ corresponds to the surface of the sphere except the south pole in the figure above whereas $\til{V}_2$ corresponds to the surface of the sphere except the north pole.

%The region $\text{Sp }A_i$ grows with $i$ to cover the entire surface of the sphere except the south pole. Similarly, $\text{Sp }B_i$ grows with $i$ to cover the entire surface of the sphere except the north pole. Furthermore, we have $\til{V}_1 = \cup_{i \geq 0} \> \text{Sp }A_i$. Therefore $\til{V}_1$ is represented by the entire surface of the sphere except the south pole. Similarly, $\til{V}_2$ is represented by the entire surface of the sphere except the north pole. The regions $\text{Sp }A_i$ and $\text{Sp }B_i$ are glued together along $\text{Sp }A_i' \cong \text{Sp }B_i'$, which is represented by the green region.

\par We claim that $\til{V}_j'$ is an open subvariety of $\til{V}_j$, for $j = 1, 2$. Without loss of generality, assume $j = 1$. To prove this claim, we need to show that $\til{V}_1' \cap \text{Sp }A_i$ is an admissible open subset of $\text{Sp }A_i$ for each $i \geq 0$. Write $\til{V}_1' \cap \text{Sp }A_i = \cup_{k = i}^{\infty}\>(\text{Sp }A_k' \cap \text{Sp }A_i)$. If $k \geq i$, then $\text{Sp }A_k' \cap \text{Sp }A_i$ is the set of maximal ideals $\gm$ of $A_i$ satisfying $\norm{Q_1 \text{ mod }\gm} \geq p^{-k}$. Therefore $\til{V}_1' \cap \text{Sp }A_i$ is the set of maximal ideals $\gm$ of $A_i$ such that $\norm{Q_1\text{ mod }\gm} > 0$. In other words, it is the complement of the vanishing set of $Q_1$ in $\text{Sp }A_i$, i.e., it is a Zariski open subset. Since Zariski open subsets are admissible open, we have proved the claim.
%we see that $\til{V}_1' \cap \text{Sp }A_i$ is an admissible open subset of $\text{Sp }A_i$. Therefore we conclude that $\til{V}_1'$ is an open subvariety of $\til{V}_1$.
\vspace{1mm}

To glue $\til{V}_1$ and $\til{V}_2$, %We construct $\tU$ by glueing $\til{V}_1$ and $\til{V}_2$ along $\til{V}_1'$ and $\til{V}_2'$, respectively. In order to do this,
we define an isomorphism $\phi : \til{V}_1' \to \til{V}_2'$ as follows. Recall that for $i \geq 0$,
\vspace{-1mm}
\[
    \begin{array}{l}
    A_i' = \co(\U)\langle p^iQ_1, p^iQ_1' \rangle/(f_2 - Q_1f_1, 1 - Q_1Q_1'), \\
    B_i' = \co(\U)\langle p^iQ_2, p^iQ_2' \rangle/(f_1 - Q_2f_2, 1 - Q_2Q_2').
    \end{array}
\]
Consider an isomorphism of affinoid algebras $B_i' \to A_i'$ given by
\vspace{-1mm}
\[
    \begin{tikzcd}[row sep = -1mm]
        p^iQ_2 \ar[r, mapsto] & p^iQ_1', \\
        p^iQ_2' \ar[r, mapsto] & p^iQ_1.
    \end{tikzcd}
\]
This isomorphism gives rise to an isomorphism $\phi_i : \text{Sp }A_i' \to \text{Sp }B_i'$ of the corresponding affinoid spaces. For $i, j \geq 0$, these maps clearly restrict to the same map on $\text{Sp }A_i' \cap \text{Sp }A_j'$. Therefore applying \cite[Proposition $9.3.3/1$]{BGR84}, we get an isomorphism of rigid analytic varieties $\phi : \til{V}_1' \to \til{V}_2'$. 

Define $\tU$ to be the rigid analytic variety obtained by glueing $\til{V}_1$ and $\til{V}_2$ along $\til{V}_1'$ and $\til{V}_2'$, respectively, using the isomorphism $\phi$. This rigid analytic variety is analogous to the blow-up in the classical algebraic geometry setting.

We see this by computing its $E$-valued points for any finite extension $E$ of $\qp$.
Since $\tU$ is covered by $\til{V}_1$ and $\til{V}_2$, we compute $\til{V}_1(E)$ and $\til{V}_2(E)$ first. Recall for $i \geq 0$,
\[
    A_i = \co(\U)\langle p^iQ_1 \rangle/(f_2 - Q_1f_1).
\]
We identify the set of $E$-valued points of $\text{Sp }A_i$ with
\[
    \{(s_1, s_2, q_1) \in \U(E) \times E \text{ } \big\vert \text{ } (1 + s_2 - (1+p)^{k - 1}) = q_1(s_1 - p^r), \> \norm{q_1} \leq p^i\}.
\]
%\[\arraycolsep=2pt
    %\begin{array}{rcl}
    %\text{Sp }A_i(E) & = & \{(s_1, s_2, q_1) \in E^3\text{ } \big\vert \text{ } (1 + s_2 - (1+p)^{r+1}) = q_1(s_1 - p^r), \\
    %& & \qquad \qquad \qquad \qquad \qquad \qquad \norm{s_1} = p^{-r}, \> \norm{s_2} \leq p^{-1} \text{ and } \norm{q_1} \leq p^i\}.
    %\end{array}
%\]
% If $P$ is an $E$-valued point of $A_i$, then $s_1, s_2$ and $q_1$ are the images of $S_1, S_2$ and $Q_1$, respectively, under the map $A_i \to E$ associated to $P$.
The first condition is a consequence of the relation $f_2 = Q_1f_1$. For $i \geq j$, the glueing map $A_i \to A_j$ induces the canonical inclusion map on $E$-valued points $\text{Sp }A_j(E) \to \text{Sp }A_i(E)$. Therefore, the set of $E$-valued points of $\til{V}_1$ is the union of all the $\text{Sp }A_i(E)$, i.e.,
\[
    \til{V}_1(E) = \{(s_1, s_2, q_1) \in \U(E) \times E \text{ } \big\vert \text{ } (1 + s_2 - (1+p)^{k - 1}) = q_1(s_1 - p^r)\}.
\]
We similarly have
\[
    \til{V}_2(E) = \{(s_1, s_2, q_2) \in \U(E) \times E \text{ } \big\vert \text{ } (s_1 - p^r) = q_2(1 + s_2 - (1 + p)^{k - 1})\}.
\]
We can now  describe $\tU(E)$.
If $P$ is an $E$-valued point of $\til{V}_1 \cap \til{V}_2$, it is an $E$-valued point of $\text{Sp }A_i \cap \text{Sp }B_i = \text{Sp }A_i'$, for some $i \geq 0$. Being a point of $\text{Sp }A_i$, it is of the form $(s_1, s_2, q_1)$, for some $s_1, s_2, q_1 \in E$. %As $P \in \text{Sp }A_i'$, we see that $\norm{q_1} \geq p^{-i}$. 
Since the glueing map between $B_i'$ and $A_i'$ takes $S_1$ to $S_1$, $S_2$ to $S_2$ and $Q_2$ to the inverse of $Q_1$ in $A_i'$, we see that $(s_1, s_2, q_1^{-1})$ represents $P$ as an $E$-valued point of $\text{Sp }B_i$ (the condition $\norm{q_1^{-1}} \leq p^i$ is satisfied because $P \in \text{Sp }A_i'$
implies $\norm{q_1} \geq p^{-i}$). We can therefore identify $(s_1, s_2, q_1) \in \til{V}_1(E)$ with the point $(s_1, s_2, 1 : q_1) \in \U(E) \times \mathbb{P}^1(E)$ and $(s_1, s_2, q_2) \in \til{V}_2(E)$ with the point $(s_1, s_2, q_2 : 1) \in \U(E) \times \mathbb{P}^1(E)$, so that a point in the intersection goes to the same point in $\U(E) \times \mathbb{P}^1(E)$ under both of these identifications. As a result of this discussion, we see that
%Since $\tU$ is covered by $\til{V}_1$ and $\til{V}_2$, we get
\begin{eqnarray}
  \label{points-blow-up}
    \tU(E) = \{(s_1, s_2, \> \xi_1 : \xi_2) \in \U(E) \times \mathbb{P}^1(E) \text{ } \big\vert \text{ } (s_1 - p^r)\xi_2 = (1 + s_2 - (1 + p)^{k - 1})\xi_1\}
\end{eqnarray}
exactly as in the classical algebraic geometry setting. 

\subsection{The blow-up map $\pi : \tU \to \U$}

In this subsection, we define a candidate for the blow-up map $\pi : \tU \to \U$ using \cite[Proposition $9.3.3/1$]{BGR84}.

To define $\pi$, we first define its restrictions $\pi_i$ to $\til{V}_i$, for $i = 1, 2$.
We first define the map $\pi_1 : \til{V}_1 \to \U$. 
For each $i \geq 0$, consider the map $\text{Sp }A_i \to \U$ associated to the canonical homomorphism
\[
    \co(\U) \to \co(\U)\langle p^iQ_1 \rangle/(f_2 - Q_1f_1) = A_i.
\]
These maps clearly restrict to the same map on $\text{Sp }A_i \cap \text{Sp }A_j$, for $i, j \geq 0$. So using \cite[Proposition $9.3.3/1$]{BGR84}, we glue these maps together to get a map $\pi_1 : \til{V}_1 \to \U$ of rigid analytic varieties. The other map $\pi_2 : \til{V}_2 \to \U$ is constructed similarly.

Now, to get a map $\pi : \tU \to \U$, we have to check that $\pi_1$ and $\pi_2$ agree on the intersection of $\til{V}_1$ and $\til{V}_2$ in $\tU$. Since this intersection is equal to $\til{V}_1'$ $(\cong \til{V}_2')$, we have to check that the following diagram commutes
\[
    \begin{tikzcd}[row sep = 0.3cm, column sep = 1.2cm]
        \til{V}_1' \arrow{dr}[pos = 0.3]{\pi_1 |_{\til{V}_1'}} \arrow{dd}[swap]{\phi}& \\
        & \U. \\
        \til{V}_2' \arrow{ur}[pos = 0.3, swap]{\pi_2 |_{\til{V}_2'}}
    \end{tikzcd}
\]
Since $\phi$ is obtained by pasting the maps $\phi_i : \text{Sp }A_i' \to \text{Sp }B_i'$, the commutativity of the diagram above is equivalent to the commutativity of the following diagram, for all $i \geq 0$
\[
    \begin{tikzcd}[row sep = 0.5cm]
        \text{Sp }A_i' \ar{dr}[pos = 0.4]{\pi_1 |_{\text{Sp }A_i'}} \arrow{dd}[swap]{\phi_i} \\
        & \U. \\
        \text{Sp }B_i' \ar{ur}[pos = 0.4, swap]{\pi_2 |_{\text{Sp }B_i'}}
    \end{tikzcd}
\]
The commutativity of the diagram above can be checked by reversing all the arrows and noting that the resulting maps are $\co(\U)$-algebra homomorphisms. This shows that $\pi_1$ and $\pi_2$ agree on the intersection of $\til{V}_1$ and $\til{V}_2$ in $\tU$. Therefore, we glue $\pi_1$ and $\pi_2$ using \cite[Proposition $9.3.3/1$]{BGR84} to get a map $\pi : \tU \to \U$.

\subsection{Proof that $\pi : \tU \to \U$ is the blow-up}

%Checking the properties of blow-ups}

In this subsection, we prove that the map $\pi : \tU \to \U$ defined in the previous subsection is the blow-up of $\U$ at
the maximal ideal $m = (f_1, f_2)$.
To prove this, we need to check that $\pi$ satisfies the two properties stated in \cite[Definition 1.2.1]{Sch}, namely the invertibility of a certain sheaf, and a corresponding universal property.

We first show that the ideal sheaf on $\U$ corresponding to the ideal $m$ of $\co(\U)$ extends to an invertible ideal sheaf on $\tU$ in the sense of \cite[Definition $1.1.1$]{Sch}. Since this is a local criterion (this means that the criterion can be checked over an admissible cover), we need to check that for all $i \geq 0$, the maps $\pi\big|_{\text{Sp }A_i} : \text{Sp }A_i \to \U$ and $\pi\big|_{\text{Sp }B_i} : \text{Sp }B_i \to \U$ satisfy the same property.
% In other words, we have to check that the ideal sheaf corresponding to $m$ extends to an invertible one on $\text{Sp } A_i$ and $\text{Sp } B_i$ for all $i \geq 0$.
Let us prove this statement for $\pi\big|_{\text{Sp }A_i} : \text{Sp }A_i \to \U$. %For any $i = 0, 1, \cdots$, consider the map $\pi \big |_{\text{Sp }A_i} : \text{Sp }A_i \to \U$.The ideal sheaf on $\U$ corresponding to the ideal $m$ extends to the one 
In this case, the extended ideal sheaf corresponds to the ideal $mA_i$ on $\text{Sp }A_i$. Moreover, \cite[Proposition 1.1.4]{Sch} states that the invertibility of this sheaf is equivalent to the invertibility of the ideal $mA_{i, \gm}$ of $A_{i, \gm}$ for all maximal ideals $\gm$ of $A_i$, where $A_{i, \gm}$ is the localization of $A_i$ at $\gm$. To show this, it is enough to prove that $mA_i$ is generated by a regular element of $A_i$ because regular elements go to regular elements under flat base change. 

%Looking at the presentation of $A_i$, we see that $mA_i$ is generated by $f_1$. W
Since $mA_i$ is generated by $f_1$, we prove that $f_1$ is not a zero divisor in $A_i$ for $i \geq 0$. Note that we only need to prove this statement for $i = 0$ because of the injections $A_i \xhookrightarrow{} A_0$.
%Since we have the canonical injections $A_i \xhookrightarrow{} A_0$, to prove that $f_1$ is not a zero divisor in $A_i$, it is enough to prove that $f_1$ is not a zero divisor in $A_0$.
\begin{lemma}
    $f_1$ is not a zero divisor in $A_0 = \co(\U)\langle Q_1 \rangle/(f_2 - Q_1f_1)$.
\end{lemma}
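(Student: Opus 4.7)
My plan is to establish the stronger claim that $A_0$ is an integral domain. The lemma follows immediately from this, since $f_1$ cannot belong to the ideal $(f_2 - Q_1 f_1)$ of $\co(\U)\langle Q_1 \rangle$ --- the generator has degree $1$ in $Q_1$, while $f_1$ has degree $0$ --- so $f_1$ is a nonzero element of $A_0$, and nonzero elements of a domain are non-zero divisors.

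To prove $A_0$ is a domain, I would exhibit an explicit isomorphism between $A_0$ and a standard integral Tate algebra. Using the defining relations $S_1 = p^r T_1$ and $S_2 = p T_3$ of $\co(\U)$ to eliminate $S_1$ and $S_2$, we may rewrite
\[
    A_0 \;\cong\; \qp\langle T_1, T_2, T_3, Q_1 \rangle\, \Big/\, \bigl(T_1 T_2 - 1,\ f_2 - Q_1 f_1\bigr).
\]
Now $(1+p)^{k-1} - 1 = p\alpha$ for some $\alpha \in \zp$, so a short direct calculation yields
\[
    f_2 - Q_1 f_1 \;=\; p\bigl(T_3 - \alpha - p^{r-1} Q_1 (T_1 - 1)\bigr).
\]
Since $p$ is a unit in $\qp$ and $r = k-2 \geq 1$, this generator is (up to a unit) $h := T_3 - \alpha - p^{r-1} Q_1(T_1 - 1)$, whose non-$T_3$ terms have Gauss norm at most $1$. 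Thus in $A_0$ we simply have $T_3 = \alpha + p^{r-1} Q_1(T_1 - 1)$, which lets us eliminate the variable $T_3$ and obtain
\[
    A_0 \;\cong\; \qp\langle T_1, T_2, Q_1 \rangle / (T_1 T_2 - 1).
\]
Concretely, this isomorphism can be checked by writing down mutually inverse $\qp$-algebra maps; alternatively, the elimination of $T_3$ is a direct consequence of Weierstrass division applied to $h$, which is $T_3$-distinguished of degree $1$ in $\qp\langle T_1, T_2, T_3, Q_1 \rangle$.

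The right-hand side is the affinoid algebra of the product of the unit annulus $\{|T_1| = 1\}$ and the closed unit disk $\{|Q_1| \leq 1\}$ --- a smooth, connected, reduced rigid space --- hence an integral domain. Therefore $A_0$ is a domain, and the image of $f_1 = p^r(T_1 - 1)$, being (up to the unit $p^r$) a nonzero function on the annulus, is nonzero, so not a zero divisor. I expect no substantive obstacle: the only nontrivial step is the elimination of $T_3$, and this becomes transparent once one observes that the $p$ appearing as the common factor in $f_2 - Q_1 f_1$ is invertible in $\qp$.
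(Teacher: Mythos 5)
Your proof is correct, and it takes a genuinely different route from the paper's. The paper first observes that $\text{Sp }A_0$ is an affinoid subdomain of $\text{Sp }\qp\langle S_1,S_2,Q_1\rangle/(f_2-Q_1f_1)$, uses flatness to reduce to that Tate algebra, and then applies Weierstrass division with respect to the $S_2$-distinguished element $f_2-Q_1f_1$ to show directly that $f_1$ is not a zero divisor. You instead eliminate $S_1$, $S_2$, and then $T_3$ explicitly (the elimination of $T_3$ being licit because $\alpha+p^{r-1}Q_1(T_1-1)$ has Gauss norm $\leq 1$ when $r\geq 1$), arriving at the identification $A_0\cong\qp\langle T_1,T_2,Q_1\rangle/(T_1T_2-1)\cong\qp\langle T_1,T_1^{-1},Q_1\rangle$, which is an integral domain since its Gauss norm is multiplicative (or by the smooth-connected-affinoid argument you give). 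This is a stronger conclusion than the paper's --- you get that $A_0$ is a domain, not merely that $f_1$ is a regular element --- at the cost of a slightly longer computation. Both proofs ultimately hinge on Weierstrass division; yours just packages it into the elimination of variables.

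One small caveat: the opening ``degree'' remark --- ``$f_1$ cannot belong to $(f_2-Q_1f_1)$ because the generator has degree $1$ in $Q_1$ while $f_1$ has degree $0$'' --- is not a valid argument as stated, since elements of $\co(\U)\langle Q_1\rangle$ are convergent power series in $Q_1$, not polynomials, so there is no degree to compare. Fortunately this remark is redundant: you correctly re-establish $f_1\neq 0$ in $A_0$ at the end by observing that its image $p^r(T_1-1)$ is a nonzero element of $\qp\langle T_1,T_1^{-1},Q_1\rangle$. I would simply delete the degree remark.
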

\begin{proof}
    Recall that 
    \[
        \co(\U) = \qp \langle S_1, S_2, T_1, T_2, T_3 \rangle/(p^rT_1 - S_1, 1 - T_1T_2, pT_3 - S_2).
    \]
    To prove that $f_1$ is not a zero divisor in $A_0$, it is enough to prove that $f_1$ is not a zero divisor in $\qp \langle S_1, S_2, Q_1 \rangle/(f_2 - Q_1f_1)$. Indeed, since $\text{Sp }A_0$ is an affinoid subdomain (a Laurent subdomain) of $\text{Sp }\qp\langle S_1, S_2, Q_1\rangle/(f_2 - Q_1f_1)$, the canonical map $\qp\langle S_1, S_2, Q_1\rangle/(f_2 - Q_1f_1) \to A_0$ is flat by \cite[Corollary $7.3.2/6$]{BGR84}. Therefore, if $f_1$ is not a zero divisor in the former then, by flatness, it is not a zero divisor in the latter.

So let us now prove that $f_1$ is not a zero divisor in $\qp\langle S_1, S_2, Q_1\rangle/(f_2 - Q_1f_1)$.
Assume that there exists $h \in \qp\langle S_1, S_2, Q_1\rangle$ such that $f_2 - Q_1f_1$ divides $f_1h$. We know that $f_2 - Q_1f_1 = 1 + S_2 - (1+p)^{k - 1} - Q_1(S_1 - p^r)$ is $S_2$-distinguished of degree $1$ (cf. \cite[Definition 5.2.1/1]{BGR84}). Applying the Weierstrass division theorem (cf. \cite[Theorem 5.2.1/2]{BGR84}) to $h$ and $f_2 - Q_1f_1$, we see that there exist unique power series $q \in \qp\langle S_1, S_2, Q_1\rangle$ and $r \in \qp\langle S_1, Q_1\rangle$ (note that $S_2$ does not appear in $r$ because $f_2 - Q_1f_1$ is of degree $1$ in $S_2$) such that $h = q(f_2 - Q_1f_1) + r$. The fact that $f_2 - Q_1f_1$ divides $f_1h$ implies that $f_2 - Q_1f_1$ divides $f_1r$. This is possible only if $r = 0$, i.e., only if $f_2 - Q_1f_1$ divides $h$. Therefore $h = 0$ in $\qp \langle S_1, S_2, Q_1 \rangle/(f_2 - Q_1f_1)$.
\end{proof}

Having checked the first property of blow-ups for $\pi$, we now
%We have verified that $\pi : \tU \to \U$ satisfies the first property mentioned in \cite[Definition 1.2.1]{Sch}.
check the second property. Using \cite[Lemma 1.2.4]{Sch}, we note that it is enough to check it for affinoid spaces. In other words, given any affinoid space $Y = \text{Sp }R$ and a map of rigid analytic varieties $f : Y \to \U$ such that the ideal sheaf on $Y$ associated to the ideal $mR$ is invertible, we prove that there exists a unique map $g : Y \to \tU$ such that the following diagram commutes
\[
    \begin{tikzcd}
        & \tU \ar[d, "\pi"] \\
        Y \ar[r, "f"] \ar[ur, "g"] & \U.
    \end{tikzcd}
\]
\par For $i = 1, 2$, define $\mathfrak{a}_i = (f^*(f_i)R : mR)$ and let $Y_i = Y \setminus V(\mathfrak{a}_i)$. We prove two lemmas about the $Y_i$. 
%But first we need the following lemma to define these restrictions.
%We define maps $g_1 : Y_1 \to \til{V}_1$ and $g_2 : Y_2 \to \til{V}_2$. These are the restrictions of $g$ to the open sets $Y_1$ and $Y_2$. To define $g_1$ and $g_2$, we require the following lemmas.
\begin{lemma}\label{loc gen}
    For $i = 1$, $2$, a point $y \in Y$ belongs to $Y_i$ if and only if $m\mathcal{O}_{Y, y} = f^*(f_i)\mathcal{O}_{Y, y}$.
\end{lemma}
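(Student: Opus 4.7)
The plan is to unwind the definition of $Y_i$ and translate the condition $\mathfrak{a}_i \not\subseteq \mathfrak{m}_y$ (where $\mathfrak{m}_y \subseteq R$ is the maximal ideal corresponding to $y$) into the stated local equality of ideals in $\mathcal{O}_{Y,y}$.

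First, I would observe that by definition $y \in Y_i = Y \setminus V(\mathfrak{a}_i)$ if and only if some element $r$ of the ideal quotient $\mathfrak{a}_i = (f^*(f_i)R : mR)$ satisfies $r(y) \neq 0$; equivalently, $\mathfrak{a}_i \mathcal{O}_{Y,y} = \mathcal{O}_{Y,y}$. Next, note that $r \in \mathfrak{a}_i$ means $r f^*(f_1), r f^*(f_2) \in f^*(f_i) R$. Since the condition for $j = i$ is automatic, membership in $\mathfrak{a}_i$ is equivalent to $r f^*(f_j) \in f^*(f_i) R$ for the unique $j \neq i$.

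For the forward direction, I would fix $r \in \mathfrak{a}_i$ with $r(y) \neq 0$, so that $r$ becomes a unit in $\mathcal{O}_{Y,y}$; writing $r f^*(f_j) = s f^*(f_i)$ in $R$ and inverting $r$ yields $f^*(f_j) \in f^*(f_i) \mathcal{O}_{Y,y}$. Combined with the trivial inclusion $f^*(f_i) \mathcal{O}_{Y,y} \subseteq m \mathcal{O}_{Y,y}$, this gives the desired equality $m \mathcal{O}_{Y,y} = f^*(f_i) \mathcal{O}_{Y,y}$. Conversely, if $m \mathcal{O}_{Y,y} = f^*(f_i) \mathcal{O}_{Y,y}$, then $f^*(f_j)$ lies in $f^*(f_i) \mathcal{O}_{Y,y}$, so there exist $s, t \in R$ with $t \notin \mathfrak{m}_y$ and $t f^*(f_j) = s f^*(f_i)$ in $R$; this $t$ lies in $\mathfrak{a}_i$ and does not vanish at $y$, proving $y \in Y_i$.

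No genuine obstacle is anticipated: the lemma is a purely formal consequence of the definition of the ideal quotient together with the standard fact that a point lies outside the vanishing locus of an ideal if and only if that ideal generates the unit ideal in the local ring at the point. The invertibility of the extended ideal $mR$ assumed in the surrounding setup is not needed for the present lemma itself, but it will be used immediately afterwards to promote the equality $m \mathcal{O}_{Y,y} = f^*(f_i) \mathcal{O}_{Y,y}$ to the statement that $f^*(f_i)$ is a local generator with good regularity properties.
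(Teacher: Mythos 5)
Your forward direction is correct and essentially the same as the paper's: an element $r$ of the ideal quotient $\mathfrak{a}_i$ that does not vanish at $y$ becomes a unit in $\mathcal{O}_{Y,y}$, and since $r f^*(f_j) \in f^*(f_i)R$, one gets $f^*(f_j) \in f^*(f_i)\mathcal{O}_{Y,y}$ and hence the desired equality of ideals. You are also right that the invertibility of $mR$ is not used in this particular lemma.

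The backward direction, however, has a genuine gap. From $f^*(f_j) \in f^*(f_i)\mathcal{O}_{Y,y}$ you immediately write down $s, t \in R$ with $t \notin \mathfrak{m}_y$ and $t f^*(f_j) = s f^*(f_i)$ in $R$. This amounts to claiming $f^*(f_j) \in f^*(f_i) R_{\mathfrak{m}_y}$, i.e., that the containment descends from the stalk $\mathcal{O}_{Y,y}$ to the algebraic localization $R_{\mathfrak{m}_y}$. In the rigid-analytic setting these two local rings are \emph{not} equal: $\mathcal{O}_{Y,y}$ is the direct limit of $\mathcal{O}(U)$ over affinoid subdomains $U \ni y$, and the canonical map $R_{\mathfrak{m}_y} \to \mathcal{O}_{Y,y}$ is an inclusion of different rings. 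Your step silently treats $\mathcal{O}_{Y,y}$ as if it were $R_{\mathfrak{m}_y}$.

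The descent is true, but it requires an argument. The paper's proof extends the hypothesized equality to the completion $\widehat{\mathcal{O}}_{Y,y}$, uses the identification $\widehat{\mathcal{O}}_{Y,y} = \widehat{R}_{\mathfrak{m}_y}$ from \cite[Proposition 7.3.2/3]{BGR84}, and then descends to $R_{\mathfrak{m}_y}$ via the faithful flatness of $R_{\mathfrak{m}_y} \to \widehat{R}_{\mathfrak{m}_y}$. Only after that does the manipulation with fractions $t, s \in R$ become legitimate, and then the rest of your argument (clearing denominators to land $s' s \in \mathfrak{a}_i$ with $s' s \notin \mathfrak{m}_y$) goes through. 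So your plan is structurally sound, but you need to insert the completion/faithful-flatness step to justify passing from the analytic stalk back to the localization.
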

\begin{proof}
    We thank one of the referees for pointing out the useful \cite[Corollary 3.15]{AM} which shortens the proof.  
    Let $y$ be a point in $Y$ and $\gm_y$ be the corresponding maximal ideal of $R$. Fix $i =1,2$.
    Using \cite[Corollary 3.15]{AM}, we see that $\mathfrak{a}_iR_{\gm_y} = (f^*(f_i)R_{\gm_y} : mR_{\gm_y})$. Now,
    \[
        y \in Y_i \iff \mathfrak{a}_iR_{\gm_y} = R_{\gm_y} \iff mR_{\gm_y} = f^*(f_i)R_{\gm_y}.
    \]
    Using \cite[Proposition 7.3.2/3]{BGR84}, we have a map $R_{\gm_y} \to \mathcal{O}_{Y, y}$, which induces an isomorphism $\widehat{R}_{\gm_y} \simeq \widehat{\mathcal{O}}_{Y, y}$.
    We conclude that
    %Since the map $R_{\gm_y} \to \widehat{R}_{\gm_y}$ is faithfully flat,
    \[
        mR_{\gm_y} = f^*(f_i)R_{\gm_y} \iff m\mathcal{O}_{Y, y} = f^*(f_i)\mathcal{O}_{Y, y}.
    \]
    Indeed, the forward implication is obtained by extending scalars and the reverse implication follows
    by extending scalars to the completion $\widehat{\mathcal{O}}_{Y, y}$ and using the fact that
    $R_{\gm_y} \to \widehat{R}_{\gm_y}$ is faithfully flat.
  \end{proof}

  Using \cite[Lemma 1.1.2]{Sch} and the lemma above one easily checks that $Y = Y_1 \cup Y_2$. The sets $Y_1$ and $Y_2$, being Zariski open subsets of $Y$, are admissible open and form an admissible cover of $Y$.
  
  Before proving the next lemma, we remark that, by \cite[Lemma 0.4]{Sch}, we have $\mathfrak{a}_i\co(Y_i) = \co(Y_i)$,  which implies that $m\co(Y_i) = f^*(f_i)\co(Y_i)$,
  for $i = 1, 2$.
\begin{lemma}\label{zero div}
    Let $i = 1$, $2$. Then, $f^*(f_i)$ is not a zero divisor in $\co(Y')$ for any admissible open subset $Y'$ of $Y$ contained in $Y_i$.
\end{lemma}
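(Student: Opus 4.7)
The plan is to reduce the claim to an elementary ring-theoretic observation, one affinoid piece at a time. Without loss of generality I take $i = 1$; the case $i = 2$ is symmetric. First, I would choose an admissible affinoid cover $Y' = \bigcup_\alpha \mathrm{Sp}(R_\alpha)$. The sheaf property of $\co$ gives an injection $\co(Y') \hookrightarrow \prod_\alpha R_\alpha$, so it suffices to show that $f^*(f_1)$ is a non-zero-divisor in every $R_\alpha$.

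Fix such an $R_\alpha$. By hypothesis the ideal sheaf on $Y$ associated to $mR$ is invertible, which in the sense of \cite[Definition $1.1.1$]{Sch} means it is admissibly locally generated by a single non-zero-divisor. Restricting this property to $\mathrm{Sp}(R_\alpha)$ produces an admissible affinoid refinement $\mathrm{Sp}(R_\alpha) = \bigcup_\beta \mathrm{Sp}(R'_\beta)$ together with non-zero-divisors $g_\beta \in R'_\beta$ satisfying $m R'_\beta = g_\beta R'_\beta$. On the other hand, because $\mathrm{Sp}(R'_\beta) \subseteq Y_1$, the remark preceding the statement yields $m R'_\beta = f^*(f_1) R'_\beta$. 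Consequently $f^*(f_1) R'_\beta = g_\beta R'_\beta$, so there exist $u, v \in R'_\beta$ with $f^*(f_1) = u g_\beta$ and $g_\beta = v f^*(f_1)$. Then $(1 - uv) g_\beta = 0$, and the regularity of $g_\beta$ forces $uv = 1$, so $u$ is a unit. Therefore $f^*(f_1)$ is also a non-zero-divisor in each $R'_\beta$.

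Applying the sheaf property of $R_\alpha$ to the cover $\{\mathrm{Sp}(R'_\beta)\}_\beta$, any $h \in R_\alpha$ satisfying $h \cdot f^*(f_1) = 0$ restricts to zero in each $R'_\beta$ and is therefore zero, which gives the claim in each $R_\alpha$ and hence in $\co(Y')$. The only mildly technical step is unwinding the meaning of ``invertible ideal sheaf'' in the rigid-analytic setting and passing between the various admissible refinements; the underlying algebraic content — that two elements generating the same principal ideal differ by a unit as soon as one of them is regular — is entirely elementary, and the key input that makes this local argument work on $Y_1$ is the global identity $m \co(Y_1) = f^*(f_1) \co(Y_1)$ recorded just before the statement (itself a consequence of Lemma \ref{loc gen} and \cite[Lemma 0.4]{Sch}).
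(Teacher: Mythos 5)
Your proof is correct but takes a different (parallel) route from the paper's. The paper argues stalk-by-stalk: it first shows $f^*(f_i)$ is a non-zero-divisor in every local ring $\co_{Y,y}$ with $y \in Y'$ (by noting $m\co_{Y,y} = f^*(f_i)\co_{Y,y}$ and that invertibility forces the stalk to be generated by a regular element), and then invokes \cite[Corollary 7.3.2/4]{BGR84} together with the sheaf axiom to show $\co(Y') \to \prod_{y\in Y'}\co_{Y,y}$ is injective, from which the conclusion follows. You instead argue affinoid-by-affinoid: you reduce to a member $R_\alpha$ of an admissible affinoid cover of $Y'$, further refine to pieces $R'_\beta$ on which the invertible ideal is generated by a non-zero-divisor $g_\beta$, identify $f^*(f_1)$ with a unit multiple of $g_\beta$ via the elementary observation that two generators of a principal ideal differ by a unit once one of them is regular, and then propagate back up with the sheaf axiom. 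Both routes rest on the same two inputs (invertibility of the ideal sheaf, and the identity $m\co(Y_1) = f^*(f_1)\co(Y_1)$), but yours sidesteps the stalk-injectivity result \cite[Corollary 7.3.2/4]{BGR84} entirely and replaces it with the unit-comparison trick. The one place to be careful is your opening paraphrase of \cite[Definition 1.1.1]{Sch}: Schoutens formulates invertibility stalk-locally (this is how the paper itself uses Proposition 1.1.4), so strictly speaking you should add a sentence explaining that coherence plus stalk-local principal generation by a regular element yields, on a suitable admissible affinoid refinement, a generator that is a non-zero-divisor on that affinoid; this follows because a local $\co(R'_\beta)$-module isomorphism $\co(R'_\beta)\cong mR'_\beta$ sends $1$ to a generator whose annihilator is zero, but as written it is asserted rather than derived.
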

  
\begin{proof}
The proof has three steps. In the first step, we prove that $f^*(f_i)$ is not a zero divisor in $\co_{Y,y}$ for any $y \in Y'$. In the second step, we prove that the canonical map $\co(Y') \to \prod_{y \in Y'}\co_{Y, y}$ is injective. In the third step, we use these two facts to conclude that $f^*(f_i)$ is not a zero divisor in $\co(Y')$. Let $i = 1$ or $2$.
\begin{itemize}
\item Using the remark preceding this lemma, we get $m\co(Y') = f^*(f_i)\co(Y')$. Extending to the stalks, we get $m\co_{Y, y} = f^*(f_i)\co_{Y, y}$ for each $y \in Y'$. Since the sheaf associated to the ideal $m\co(Y)$ is invertible, we see that $m\co_{Y, y}$ is generated by a regular element of $\co_{Y, y}$ for each $y \in Y$. Therefore $f^*(f_i)$ is not a zero divisor in $\co_{Y, y}$ for each $y \in Y'$.

\item Consider the map $\co(Y') \to \prod_{y \in Y'}\co_{Y, y}$. Let $a$ be an element of $\co(Y')$ that maps to $0$ in $\co_{Y, y}$ for each $y \in Y'$. Choose an admissible cover $\{U_j\}_{j \in J}$ of $Y'$ by affinoid subdomains of $Y$. For any $j \in J$, the image of $a$ in $\co_{Y, y}$ is $0$ for each $y \in U_j$. Using \cite[Corollary $7.3.2/4$]{BGR84}, we see that the restriction of $a$ to $\co(U_j)$ is $0$ for each $j \in J$. Since the cover $\{U_j\}_{j \in J}$ is admissible, we see that $a = 0$ in $\co(Y')$.

\item Suppose there exists a $b \in \co(Y')$ such that $f^*(f_i)b = 0$. This means that $f^*(f_i)b = 0$ in $\co_{Y, y}$ for each $y \in Y'$. Using the fact that $f^*(f_i)$ is not a zero divisor in $\co_{Y, y}$ for $y \in Y'$, we see that the image of $b$ in $\co_{Y, y}$ is $0$. The injectivity of the map $\co(Y') \to \prod_{j \in J}\co_{Y, y}$ implies that $b = 0$ in $\co(Y')$. Therefore $f^*(f_i)$ is not a zero divisor in $\co(Y')$.
\qedhere
\end{itemize}
\end{proof}

We now construct $g : Y \to \tU$.
In order to do this, we use the above lemmas to define the restrictions of $g$ to the elements of a refinement of the cover $Y = Y_1 \cup Y_2$ and then
apply the usual patching argument.
%In order to do this, we use the above lemmas to define its restrictions $g_1 : Y_1 \to \til{V}_1$ and $g_2 : Y_2 \to \til{V}_2$ to $Y_1$ and $Y_2$, respectively, and then apply the usual patching argument.
% We proceed to define $g_1$ and $g_2$.

As $f^*(f_2) \in m\mathcal{O}(Y_1) = f^*(f_1)\mathcal{O}(Y_1)$, there exists a $q_1 \in \mathcal{O}(Y_1)$ such that $f^*(f_2) = q_1 f^*(f_1)$. Similarly, there exists a $q_2 \in \mathcal{O}(Y_2)$ such that $f^*(f_1) = q_2 f^*(f_2)$. Furthermore, $q_1$ and $q_2$ are unique because of Lemma $\ref{zero div}$.
%{\color{red} Fix a natural number $\io$ such that $p^{\io}$ is bigger than $\norm{q_1}$ and $\norm{q_2}$.}

Since $Y_1$ is an admissible open subset of $Y$, there exists an admissible cover $\{Y_{1, m}\}_{m \in I}$ of $Y_1$ by affinoid subdomains $Y_{1, m}$ of $Y$. Here and just below
$m$ is an element of the indexing set $I$ and should not confused with the maximal ideal $m$ of $\co(\U)$ used throughout this paper.
For each $m \in I$, there is a restriction map $\co(Y_1) \to \co(Y_{1, m})$ and so we can think of $q_1$ as an element of $\co(Y_{1, m})$. For each $m \in I$, fix a natural number $\un{m}$ such that $\norm{p^{\un{m}}q_1} \leq 1$. Consider the affinoid algebra map $f^* : \co(\U) \to \co(Y)$ and compose it with the restriction map $\co(Y) \to \co(Y_{1, m})$. Extend this composition to a map $\co(\U)\langle p^{\un{m}}Q_1 \rangle \to \co(Y_{1, m})$ by sending $p^{\un{m}}Q_1$ to $p^{\un{m}}q_1$. This is possible because of \cite[Corollary 1.4.3/2]{BGR84}. Since we have the relation $f^*(f_1) = q_1f^*(f_2)$ in $\co(Y_{1, m})$, we see that this extension factors through an affinoid algebra map $A_{\un{m}} \to \co(Y_{1, m})$. Let $g_{1, m} : Y_{1, m} \to \text{Sp }A_{\un{m}}$ be the corresponding map of affinoid spaces. Similarly, there is an admissible cover $\{Y_{2, n}\}_{n \in J}$ of $Y_2$ by affinoid subdomains $Y_{2, n}$ of $Y$ and for each $n \in J$, a fixed natural number $\un{n}$ such that $\norm{p^{\un{n}}q_2} \leq 1$ and a map of affinoid spaces $g_{2, n} : Y_{2, n} \to \text{Sp }B_{\un{n}}$ associated to the  map of affinoid algebras $B_{\un{n}} \to \co(Y_{2, n})$ sending $p^{\un{n}}Q_2$ to $p^{\un{n}}q_2$. 
%Reversing arrows, it is easy to check that 
Since $g_{1, m}^*$ and $g_{2, n}^*$ are $\co(\U)$-algebra homomorphisms, we see that $\pi \circ g_{1, m} = f$ on $Y_{1, m}$ and $\pi \circ g_{2, n} = f$ on $Y_{2, n}$, for $m \in I$, $n \in J$.

  Note that $\{Y_{1, m}, Y_{2, n}\}_{m \in I, n \in J}$ is an admissible cover of $Y$. To obtain $g : Y \to \tU$, we glue all the maps $g_{1, m}$ and $g_{2, n}$.
% To glue these maps, we need to check that they are pairwise compatible.
 % In order to do this, we much check that for any $m, m' \in I$ and $n, n' \in J$, the maps $g_{1, m}$ and $g_{1, m'}$ agree on $Y_{1, m}\cap Y_{1, m'}$, the maps $g_{2, n}$ and $g_{2, n'}$ agree on $Y_{2, n}\cap Y_{2, n'}$, and the maps $g_{1, m}$ and $g_{2, n}$ agree on $Y_{1, m} \cap Y_{2, n}$.
  Without loss of generality, assume $\un{m} \geq \un{m}'$. Then, $g_{1, m}$ and $g_{1, m'}$ agree on $Y_{1, m} \cap Y_{1, m'}$ because the following diagram commutes
\[
    \begin{tikzcd}[row sep = 0.2cm, column sep = 2.5 cm]
        A_{\un{m}'} \ar{dr}[pos = 0.4]{g_{1, m'}^*} & \\
        & \mathcal{O}(Y_{1, m} \cap Y_{1, m'}), \\
        A_{\un{m}} \ar{uu} \ar{ur}[pos = 0.4, swap]{g_{1, m}^*} & 
    \end{tikzcd}
\]
%  
%\[
%    \begin{tikzcd}
%        & A_{\un{m}} \ar[dl] \ar[d]\\
%        \co(Y_{1, m} \cap Y_{1, m'}) & A_{\un{m}'} \ar[l].
%    \end{tikzcd}
%\]
A similar argument shows that $g_{2, n}$ and $g_{2, n'}$ agree on $Y_{2, n} \cap Y_{2, n'}$. 

We check that the maps $g_{1, m}$ and $g_{2, n}$ agree on $Y_{1, m} \cap Y_{2, n}$. Let $N \geq \un{m}$, $\un{n}$. Think of $g_{1, m}$ as a map from $Y_{1, m}$ to $\text{Sp }A_{N}$ (by composing with the map $\text{Sp }A_{\un{m}} \to \text{Sp }A_N$) and $g_{2, n}$ as a map from $Y_{2, n}$ to $\text{Sp }B_{N}$ (by composing with the map $\text{Sp }B_{\un{n}} \to \text{Sp }B_N$). We claim that $g_{1, m}$ maps $Y_{1, m} \cap Y_{2, n}$ into $\text{Sp }A_{N}'$ and that $g_{2, n}$ maps $Y_{1, m} \cap Y_{2, n}$ into $\text{Sp }B_{N}'$. To show this, we first prove that $q_1$ is the inverse of $q_2$ in $\co(Y_{1, m}\cap Y_{2, n})$. Transfer the relations $f^*(f_2) = q_1f^*(f_1)$ and $f^*(f_1) = q_2f^*(f_2)$ from $\co(Y_1)$ and $\co(Y_2)$, respectively, to $\co(Y_{1, m}\cap Y_{2, n})$ by the restriction maps. Substituting the first relation into the second relation, we get $f^*(f_1) = q_2q_1f^*(f_1)$. Using Lemma \ref{zero div}, we cancel $f^*(f_1)$ to see that $q_1$ is indeed the inverse of $q_2$ in $\co(Y_{1, m} \cap Y_{2, n})$.

Compose $g_{1, m}^* : A_{N} \to \co(Y_{1, m})$ with the restriction map $\co(Y_{1, m}) \to \co(Y_{1, m} \cap Y_{2, n})$. Extend this composition to a map $A_{N}\langle p^{N}Q_1'\rangle \to \mathcal{O}(Y_{1, m} \cap Y_{2, n})$ by sending $p^{N}Q_1'$ to $p^{N}q_2$. This is possible because $\norm{p^{N}q_2} \leq 1$ since the intersection of two affinoid subdomains of an affinoid space is again an affinoid space and because under the homomorphism $\co(Y_{2, n}) \to \co(Y_{1, m} \cap Y_{2, n})$ of affinoid algebras, the image of an element has norm at most the norm of the element itself. Using the fact that $q_2$ is the inverse of $q_1$ in $\mathcal{O}(Y_{1, m} \cap Y_{2, n})$, we may further extend the above map to $A_N'$ to obtain the following commutative diagram
\[
    \begin{tikzcd}
        A_{N} = \co(\U)\langle p^{N}Q_1 \rangle/(f_2 - Q_1f_1) \ar[d] \ar[r, "g_{1, m}^*"] & \mathcal{O}(Y_{1, m}) \ar[d] \\
        A_{N}' = \co(\U)\langle p^{N}Q_1, p^{N}Q_1' \rangle/(f_2 - Q_1f_1, 1 - Q_1Q_1') \ar[r] & \mathcal{O}(Y_{1, m} \cap Y_{2, n}).
    \end{tikzcd}
\]
This shows that the affinoid algebra map $A_{N} \to \mathcal{O}(Y_{1, m} \cap Y_{2, n})$ factors through the map $A_{N} \to A_{N}'$. Reversing the arrows, we see that the restriction of $g_{1, m}$ to $Y_{1, m} \cap Y_{2, n}$ factors through the inclusion $\text{Sp }A_{N}' \to \text{Sp }A_{N}$. In other words, $g_{1, m}$ maps $Y_{1, m} \cap Y_{2, n}$ into $\text{Sp }A_{N}'$. Similarly, we can prove that $g_{2, n}$ maps $Y_{1, m} \cap Y_{2, n}$ into $\text{Sp }B_{N}'$.

Now we can prove that $g_{1, m}$ and $g_{2, n}$ agree on $Y_{1, m} \cap Y_{2, n}$. Indeed, this statement is equivalent to the commutativity of the following diagram
\[
    \begin{tikzcd}[row sep = 0.2cm, column sep = 2.5cm]
        A_{N}' \ar{dr}[pos = 0.4]{g_{1, m}^*} & \\
        & \mathcal{O}(Y_{1, m} \cap Y_{2, n}), \\
        B_{N}' \ar{uu}{\phi_{N}^*} \ar{ur}[pos = 0.4, swap]{g_{2, n}^*} & 
    \end{tikzcd}
\]
where the vertical map is the glueing map. This diagram commutes because under the top two maps $Q_2 \mapsto Q_1' \mapsto q_2$ and 
$Q_2' \mapsto Q_1 \mapsto q_1$, and these are exactly the images under the lower map. 
% $q_1$ is the inverse of $q_2$ in $\mathcal{O}(Y_{1, m} \cap Y_{2, n})$.

By \cite[Proposition 9.3.3/1]{BGR84}, there exists a map of rigid analytic spaces $g : Y \to \tU$ such that the following diagram
commutes
\[
    \begin{tikzcd}
        & \tU \ar[d, "\pi"]\\
        Y \ar[r, "f"] \ar[ur, "g"] & \U.
    \end{tikzcd}
\]

\par To prove that $\pi : \tU \to \U$ is the blow-up of $\U$ at the ideal $m = (f_1, f_2)$, it remains to check that $g$ is unique making the above diagram
commute. Suppose $g' : Y \to \tU$ is another candidate.  We prove that $g = g'$. We need:
%To prove this, we first claim that $g'^{-1}(\til{V}_i) = Y_i$ for $i = 1, 2$.
\begin{lemma}\label{pre-image}
For $i = 1, 2$, we have $g'^{-1}(\til{V}_i) = Y_i$.
\end{lemma}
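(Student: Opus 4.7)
The plan is to use the characterization of $\til{V}_i$ inside $\tU$ coming from the local generators of the ideal sheaf $\pi^{-1}(m)\co_{\tU}$, and then argue at the stalks by combining Lemma~\ref{loc gen} with the non-zero-divisor property established in the proof of Lemma~\ref{zero div}.

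For the inclusion $g'^{-1}(\til{V}_1) \subseteq Y_1$, fix $y \in g'^{-1}(\til{V}_1)$ and choose an index $i$ with $g'(y) \in \text{Sp } A_i \subseteq \til{V}_1$. On $\text{Sp }A_i$ the relation $\pi^*(f_2) = Q_1 \pi^*(f_1)$ holds, so pulling back by $g'$ gives $f^*(f_2) = g'^*(Q_1)\, f^*(f_1)$ in $\co_{Y,y}$, whence $m\co_{Y,y} = f^*(f_1)\co_{Y,y}$. By Lemma~\ref{loc gen}, $y \in Y_1$. The case $i=2$ is identical, using $B_i$.

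For the reverse inclusion, suppose $y \in Y_1$. Since $\tU = \til{V}_1 \cup \til{V}_2$, either $g'(y) \in \til{V}_1$ and we are done, or $g'(y) \in \til{V}_2 \setminus \til{V}_1$. In the latter case, unwinding the construction of $\til{V}_2$ and $\til{V}_2' = \til{V}_1 \cap \til{V}_2$ from the affinoids $\text{Sp }B_i' \subseteq \text{Sp }B_i$, a point of $\til{V}_2$ lies in $\til{V}_2'$ iff the coordinate $Q_2$ is nonzero there; hence $\til{V}_2 \setminus \til{V}_1$ is cut out by $Q_2 = 0$, and so $g'^*(Q_2)(y) = 0$. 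On the other hand, on $\text{Sp }B_i$ we have $\pi^*(f_1) = Q_2\, \pi^*(f_2)$, so pulling back yields $f^*(f_1) = g'^*(Q_2)\, f^*(f_2)$ in $\co_{Y,y}$. Combining this with $y \in Y_1$, which via Lemma~\ref{loc gen} gives an $h \in \co_{Y,y}$ with $f^*(f_2) = h\, f^*(f_1)$, we obtain $f^*(f_1)\bigl(1 - g'^*(Q_2)\,h\bigr) = 0$ in $\co_{Y,y}$. The first step of the proof of Lemma~\ref{zero div}, applied at the stalk of a small enough affinoid neighborhood contained in $Y_1$, tells us $f^*(f_1)$ is not a zero divisor in $\co_{Y,y}$, so $g'^*(Q_2)\,h = 1$ in $\co_{Y,y}$, contradicting $g'^*(Q_2)(y) = 0$. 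Hence $g'(y) \in \til{V}_1$, and the case $i = 2$ is entirely symmetric.

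The only point requiring care is the identification $\til{V}_2 \setminus \til{V}_1 = V(Q_2)$: any point of $\til{V}_2$ at which $Q_2$ is nonzero lies in $\text{Sp }B_i'$ for all sufficiently large $i$, and conversely points of $\text{Sp } B_i'$ map under $\phi^{-1}$ into $\til{V}_1'$; once this is in place, the rest is a routine combination of Lemmas~\ref{loc gen} and~\ref{zero div}.
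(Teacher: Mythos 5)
Your proof is correct and takes essentially the same approach as the paper: both the forward inclusion (pulling back $\pi^*(f_2) = Q_1\pi^*(f_1)$ to the stalk and invoking Lemma~\ref{loc gen}) and the reverse inclusion by contradiction (combining Lemma~\ref{loc gen} on $Y_1$ with the non-zero-divisor property of $f^*(f_1)$ at the stalk and the vanishing of $Q_2$ on $\til{V}_2 \setminus \til{V}_2'$) mirror the paper's argument. The only cosmetic differences are that you work directly at stalks throughout and derive the contradiction $g'^*(Q_2)h = 1$ directly rather than first establishing $Y_{2,i}\subseteq Y_2$ and extracting a unit $u$ with $f^*(f_1) = uf^*(f_2)$, as the paper does.
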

\begin{proof}
    Without loss of generality, assume that $i = 1$.
    \begin{itemize}
    \item We prove that $g'^{-1}(\til{V}_1) \subseteq Y_1$.  Recall that $\til{V}_1 = \cup_{i \geq 0}\>\text{Sp }A_i$. Therefore it is enough to prove that
      $Y_{1, i} :=  g'^{-1}(\text{Sp }A_i) \subseteq Y_1$ for each $i \geq 0$. (The notation $Y_{1,i}$ here and $Y_{2,i}$ below
      are local to this proof.) Fix $i \geq 0$. Consider the following commutative diagram
    \[
        \begin{tikzcd}
            & \text{Sp }A_i \ar[d, "\pi"] \\
            Y_{1, i} = g'^{-1}(\text{Sp }A_i) \ar[ur, "g'"] \ar[r, "f"] & \U.
        \end{tikzcd}
    \]
    % For the sake of brevity, for each $i \geq 0$, let us denote $g'^{-1}(\text{Sp }A_i)$ by $Y_{1, i}'$.
    We know that $mA_i = f_1A_i$. Extending this ideal from $A_i$ to $\co(Y_{1, i})$ using $g'^*$, we see that $m\co(Y_{1, i}) = g'^*(f_1)\co(Y_{1, i}) = f^*(f_1) \co(Y_{1, i})$. Let $y \in Y_{1, i}$.  Noting that $Y_{1, i}$ is an admissible open subset of $Y$, we have $m\co_{Y, y} = f^*(f_1)\co_{Y, y}$. By Lemma \ref{loc gen}, we see that $y \in Y_1$. % Therefore $g'^{-1}(\til{V}_1) \subseteq Y_1$.
    
  \item We prove that $Y_1 \subseteq g'^{-1}(\til{V}_1)$. This is equivalent to proving $g'^{-1}(\tU \setminus \til{V}_1) \subseteq Y \setminus Y_1$.
    Recall that $\tU$ is covered by $\til{V}_1$ and $\til{V}_2$. % So $\tU \setminus \til{V}_1 = \til{V}_2 \setminus \til{V}_1$.
    Since $\til{V}_2 = \cup_{i \geq 0}\>\text{Sp }B_i$, we have
    \[
    \tU \setminus \til{V}_1 = \cup_{i \geq 0}\> (\text{Sp }B_i \setminus \til{V}_1).
    \]
    % To prove that $g'^{-1}(\tU\setminus \til{V}_1) \subseteq Y \setminus Y_1$,
    Thus, it is enough to prove that $Y_{2,i} := g'^{-1}(\text{Sp }B_i \setminus \til{V}_1) \subseteq Y \setminus Y_1$ for each $i \geq 0$. Fix $i \geq 0$.
    We prove that if $y\in Y_{2,i}$, %if $g'(y) \in \text{Sp }B_i \setminus \til{V}_1$,
    then $y \not \in Y_1$. We do so by proving that the ideal $m\co_{Y, y}$ of $\co_{Y, y}$ is not generated by $f^*(f_1)$ (cf. Lemma \ref{loc gen}). 
    So, let $y \in Y_{2, i}$ map to $\text{Sp }B_i \setminus \til{V}_1$ under $g'$. Assume towards a contradiction that $y \in Y_1$. By Lemma $\ref{loc gen}$, we see that $m\co_{Y, y}$ is generated by $f^*(f_1)$. By the analog of the first bullet point, $Y_{2, i} \subseteq Y_2$, so we also have $m\co_{Y, y} = f^*(f_2)\co_{Y, y}$. Therefore there exists a unit $u \in \co_{Y, y}$ such that $f^*(f_1) = uf^*(f_2)$.  Consider the following commutative diagram
    \[
        \begin{tikzcd}
            & \text{Sp }B_i \ar[d, "\pi"] \\
            Y_{2,i} = g'^{-1}(\text{Sp }B_i \setminus \til{V}_1) \ar[ur, "g'"] \ar[r, "f"] & \U.
        \end{tikzcd}
    \]
    % Again for the sake of brevity, let us denote $g'^{-1}(\text{Sp }B_i)$ by $Y_{2, i}$.
    Note that $Y_{2, i}$ is an admissible open subset of $Y$.
    We obtain the relation $f^*(f_1) = g'^*(Q_2)f^*(f_2)$ in $\co_{Y, y}$ by pushing the relation $f_1 = Q_2f_2$ from $B_i$ to $\co(Y_{2, i})$ using $g'^*$ and then from $\co(Y_{2, i})$ to $\co_{Y, y}$. Since $f^*(f_2)$ is not a zero divisor in $\co_{Y, y}$, we see that $g'^*(Q_2) = u$, which is a unit of $\co_{Y, y}$. We prove that this is a contradiction. We know that $\text{Sp }B_i \, \cap \,\til{V}_1$ is the set of maximal ideals $\gm$ of $B_i$ satisfying the extra condition $\norm{Q_2 \mod{\gm}} > 0$. Therefore $\text{Sp }B_i \setminus \til{V}_1$ is the set of maximal ideals $\gm$ of $B_i$ satisfying $\norm{Q_2 \mod{\gm}} = 0$. Since $g'(y) \in \text{Sp }B_i \setminus \til{V}_1$, we see that $Q_2$ vanishes at $g'(y)$. In other words, if we denote the maximal ideal of $\co_{\tU, \> g'(y)}$ by $\gm_{g'(y)}$, then we have $Q_2 \> \text{mod }\gm_{g'(y)} = 0$. If $\gn_{y}$ is the maximal ideal of $\co_{Y, y}$, then using the following commutative diagram
    \[
        \begin{tikzcd}
            \co_{\tU, \> g'(y)} \ar[d] \ar[r, "g'^*"] & \co_{Y, y} \ar[d] \\
            \co_{\tU, \> g'(y)}/\gm_{g'(y)} \ar[r] & \co_{Y, y}/\gn_{y},
        \end{tikzcd}
    \]
    we get $g'^*(Q_2) \text{ mod }\gn_y = 0$. This means that $g'^*(Q_2)$ is contained in the maximal ideal of $\co_{Y, y}$. In other words, $g'^*(Q_2)$ is not a unit. This is a contradiction. Therefore $y \not \in Y_1$. % Therefore $Y_i \subseteq g'^{-1}(\til{V}_i)$ for $i = 1, 2$.
    \qedhere
    \end{itemize}
    \end{proof}

    To show that $g = g'$, we prove that their restrictions to $Y_{1, m}$ and $Y_{2, n}$ are equal, for all $m \in I$ and $n \in J$. We prove that $g$ and $g'$ agree on $Y_{1, m}$. Using the definition of $g$, we see that the restriction of $g$ to $Y_{1, m}$ factors as $Y_{1, m} \to \text{Sp }A_{\un{m}} \to \tU$. Since $g'$ maps the affinoid space $Y_{1, m}$ to $\tU$ and $\{\text{Sp }A_i, \text{Sp }B_j\}_{i, j \geq 0}$ is an admissible cover of $\tU$, we deduce that $g'$ maps $Y_{1, m}$ into the union of finitely many $\text{Sp }A_i$ and $\text{Sp }B_j$. By Lemma \ref{pre-image}, we see that $g' : Y_{1, m} \to \tU$ factors as $Y_{1, m} \to \text{Sp }A_i \to \tU$ for some $i \gg 0$. We may assume that $i \geq \un{m}$. To check that $g$ and $g'$ agree on $Y_{1, m}$, it is therefore enough to check that the following diagram commutes
\[
    \begin{tikzcd}[row sep = 0.3cm]
        & \text{Sp }A_i \\
        Y_{1, m} \ar[ur, "g'"] \arrow{dr}[swap]{g} \\
        & \text{Sp }A_{\un{m}}. \ar[uu]
    \end{tikzcd}
\]
In other words, we have to check that the following diagram commutes
\[
    \begin{tikzcd}[row sep = 0.3cm]
        & A_i \ar[dd] \ar{dl}[pos = 0.3, swap]{g'^*} \\
        \co(Y_{1, m}) \\
        & A_{\un{m}} \ar{ul}[pos = 0.3]{g^*}.
    \end{tikzcd}
\]
Since both $g^*$ and $g'^*$ are equal to $f^*$ on $\co(\U)$, we only need to check that $g^*(Q_1) = g'^*(Q_1)$. Pushing the relation $f_2 = Q_1f_1$ from $A_i$ to $\co(Y_{1, m})$ under $g'^*$, we get $f^*(f_2) = g'^*(Q_1)f^*(f_1)$. Similarly, pushing the same relation from $A_{\un{m}}$ to $\co(Y_{1, m})$ under  $g^*$, we get $f^*(f_2) = g^*(Q_1)f^*(f_1)$. Using Lemma $\ref{zero div}$, we get $g^*(Q_1) = g'^*(Q_1)$. This proves the commutativity of the diagram above. In other words, $g$ and $g'$ agree on $Y_{1, m}$. A similar argument shows that $g$ and $g'$ agree on $Y_{2, n}$. Therefore $g = g'$ on $Y$. This finally proves that $\pi : \tU \to \U$ is the blow-up of $\U$ at the maximal ideal $(f_1, f_2)$.

\subsection{Points in the exceptional fiber as tangent directions}
The following standard fact allows us to realize $E$-valued points of the fiber over the exceptional point $(p^r, (1 + p)^{r + 1} - 1)$ as tangent directions
in $\U$ at this point. Recall that the maximal ideal $m$ of $\co(\U)$ corresponding to this exceptional point is the ideal generated by
$f_1 = S_1 - p^r$ and $f_2 = S_2 - ((1 + p)^{k - 1}-1)$.

\begin{Proposition}\label{fiber and tangents}
  There is a bijection
  $$\pi^{-1}(p^r, (1 + p)^{r + 1} - 1)(E) \rightarrow  \mathbb{P}(\mathrm{Hom}(m/m^2\otimes_{\qp}E, E))$$
  between the $E$-valued points of the fiber over the point $(p^r, (1 + p)^{k - 1} - 1)$ and the elements of the projectivization of
  the tangent space $\mathrm{Hom}(m/m^2 \otimes_{\qp}E, E)$ over $E$, which sends a point $(p^r, (1 + p)^{k - 1} - 1, a : b) \in \tU(E)$ to the class in $\mathbb{P}(\mathrm{Hom}(m/m^2\otimes_{\qp}E, E))$ represented by $v$, where
    \[
        v(\br{f_1} \otimes 1) = a \text{ and } v(\br{f_2} \otimes 1) = b, 
      \]
    with bar denoting image modulo $m^2$.   
\end{Proposition}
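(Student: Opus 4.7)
The proof will proceed in three steps: identify the fiber over the exceptional point with $\mathbb{P}^1(E)$; identify $m/m^2$ as a two-dimensional $\qp$-vector space with basis $\{\br{f}_1, \br{f}_2\}$; and verify that the stated assignment is then the evident bijection between these two copies of $\mathbb{P}^1(E)$.

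For the first step I would invoke the explicit description of $\tU(E)$ derived in Section~\ref{L-valued points here}, namely
\[
\tU(E) = \{(s_1, s_2, \xi_1 : \xi_2) \in \U(E) \times \mathbb{P}^1(E) \mid (s_1 - p^r)\xi_2 = (s_2 - ((1+p)^{k-1}-1))\xi_1\}.
\]
At the exceptional point $s_1 = p^r$, $s_2 = (1+p)^{k-1}-1$ both sides of the incidence relation vanish identically, so the fiber is parametrized freely by $(a:b) \in \mathbb{P}^1(E)$.

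For the second step I would first show that $m = (f_1, f_2)$. Although $m$ is a priori generated by the images of $S_1 - p^r$, $S_2 - ((1+p)^{k-1}-1)$, $T_1 - 1$, $T_2 - 1$ and $T_3 - ((1+p)^{k-1}-1)/p$, the defining relations of $\co(\U)$ allow the three auxiliary generators to be eliminated: from $p^r T_1 = S_1$ one reads off $T_1 - 1 = p^{-r}(S_1 - p^r) \in (f_1)$; from $p T_3 = S_2$ one gets $T_3 - ((1+p)^{k-1}-1)/p \in (f_2)$; and from $T_1 T_2 = 1$ (which makes $T_1$ a unit with inverse $T_2$) one obtains $T_2 - 1 = -T_2(T_1 - 1) \in (f_1)$. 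That $\br{f}_1, \br{f}_2$ are $\qp$-linearly independent in $m/m^2$ then follows from the fact that $\U$ is smooth of dimension $2$ at the exceptional point, being, after rescaling $S_1, S_2$ by $p^r, p$, an open subdomain of the smooth rigid analytic variety $\zp^* \times \zp$. Hence $m/m^2 \otimes_{\qp} E$ is a two-dimensional $E$-vector space in which $(\br{f}_1 \otimes 1, \br{f}_2 \otimes 1)$ is an $E$-basis, and its dual projectivization is $\mathbb{P}^1(E)$.

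The third step is now essentially tautological: the assignment $(a:b) \mapsto [v]$ with $v(\br{f}_1 \otimes 1) = a$ and $v(\br{f}_2 \otimes 1) = b$ is well-defined on projective classes since scaling $(a,b)$ by $\lambda \in E^\times$ scales $v$ by $\lambda$, and bijectivity is immediate because a linear functional on $m/m^2 \otimes_{\qp} E$ is uniquely determined by its values on a basis. The only non-formal point is thus the verification that $m = (f_1, f_2)$ via elimination of the $T_i$; the rest is unwinding definitions.
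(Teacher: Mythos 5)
Your proof is correct, and it takes a genuinely different route from the paper's. The paper constructs the bijection \emph{canonically} out of the blow-up structure: given a point $P = (p^r, (1+p)^{k-1}-1, a:b)$ with (say) $b \neq 0$, it regards $P$ as an $E$-point of some chart $\mathrm{Sp}\,B_i \subset \til{V}_2$ with residue homomorphism $g_P : B_i \to E$ and kernel $m_P$, and then examines the natural surjection
\[
  (m/m^2) \otimes_{\qp} E \longrightarrow f_2 B_i / f_2 B_i m_P \otimes_{B_i, g_P} E \cong E
\]
induced by $\co(\U) \to B_i$. Unwinding the relation $\br{f_1} = \br{f_2}\,\br{Q_2}$ shows this functional is projectively equal to the $v$ in the statement, and bijectivity is then checked. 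You instead identify \emph{both} sides of the proposition with $\mathbb{P}^1(E)$ by direct, elementary means — the fiber via the explicit incidence description of $\tU(E)$ (both sides of $(s_1-p^r)\xi_2 = (s_2-((1+p)^{k-1}-1))\xi_1$ vanish at the centre), and the dual projective space via the observation that $\br{f}_1, \br{f}_2$ form a $\qp$-basis of $m/m^2$ — and then observe that the stated formula is the evident bijection between two copies of $\mathbb{P}^1(E)$. Your elimination of the auxiliary generators $T_1-1$, $T_2-1$, $T_3-((1+p)^{k-1}-1)/p$ to get $m=(f_1,f_2)$ and the smoothness argument for $\dim_{\qp} m/m^2 = 2$ are both correct (the latter can also be seen directly from $\co(\U) \cong \qp\langle T_1, T_1^{-1}, T_3\rangle$). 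What the paper's more involved argument buys is a conceptual explanation of \emph{why} this particular formula is the natural one attached to the conormal geometry of the exceptional divisor — a fact that is implicitly leaned on when $v$ reappears as a specialization map in the proof of Theorem~\ref{The L invariant} — whereas your argument verifies the bijection more cheaply but leaves the formula looking like a coincidence of coordinates.
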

  
\begin{proof}
  Let $P = (p^r, (1 + p)^{k - 1} - 1, a : b)$ be an $E$-valued point in the fiber above the exceptional point. Assume $b \not = 0$. Write $P = (p^r, (1 + p)^{k - 1} - 1, a/b : 1)$.
  From the construction of $\tU$, we see that $P \in \til{V}_2(E)$.
  Therefore $P$ is an $E$-valued point of $\text{Sp }B_i$, for some $i \geq 0$. % as $\til{V}_2$ is covered by $\text{Sp }B_j$, for all $j$.
    %Since $\til{V}_2$ is covered by $\text{Sp }B_i$, we see that $P$ is an $E$-valued point of $\text{Sp }B_i$ for some $i \geq 0$. 
  Let $g_P$ be the homomorphism $B_i \to E$ associated with $P$. The discussion at the end of Section~\ref{L-valued points here} implies that $g_P$ is defined by $g_P(S_1) = p^r$, $g_P(S_2) = (1 + p)^{k - 1} - 1$ and $g_P(Q_2) = a/b$. Let $m_P$ be the kernel of $g_P$. The map $\co(\U) \to B_i$  induces
  an $E$-linear surjection
    \[
        (m/m^2)\otimes_{\qp}E \to %f_2B_i\otimes_{B_i}E =
        f_2B_i/f_2B_im_P \otimes_{B_i, g_P} E,
    \]
    given by
    \begin{eqnarray}
        \br{f_1}\otimes 1 & \mapsto  & \br{f_2}\otimes (a/b) \nonumber\\
        \br{f_2}\otimes 1 & \mapsto & \br{f_2}\otimes 1, \nonumber
    \end{eqnarray}
    where the bars
    over the elements on the left denote their images modulo $m^2$ and the bars over the elements on the right  denote their images modulo $f_2B_i m_P$,
    and where we have used $\br{f_1} \otimes 1 = \br{f_2}\br{Q_2} \otimes 1 = \br{f_2} \otimes (a/b)$ in the codomain.    
    The codomain of this map can be identified with $E$ up to multiplication by a non-zero scalar.
    % In other words, a choice of $\varepsilon \in E$ allows
    %us to get an $E$-linear map $m/m^2\otimes_{\qp}E \to E$ defined by
    %\[
    %    \begin{array}{rcl}
    %        \br{f_1}\otimes 1 & \to & \frac{a}{b}\varepsilon \\
    %        \br{f_2}\otimes 1 & \to & \varepsilon.
    %    \end{array}
    %\]
    %Even if this map involves a choice of $\varepsilon$,
    The class of the above map in $\mathbb{P}(\mathrm{Hom}(m/m^2\otimes_{\qp} E, E))$ is the 
    same as that of the map $v$, where $v(\br{f_1}\otimes 1) = a$ and $v(\br{f_2}\otimes 1) = b$.
    A similar construction works if $a \neq 0$. One checks immediately that the resulting assignment $P \mapsto v$ is well-defined and a bijection. 
\end{proof}

\section{Explicit bases of $H^1(\calR_{\qp}(x^r\chi))$ and $\sL$-invariants}
\label{link robba ring here}

Let $\Gamma = \mathrm{Gal}(\qp(\mu_{p^\infty})/\qp)$ and think of the cyclotomic character $\chi$ as a character $\chi  : \Gamma \to \zp^*$.
In this section,  we %{\color{blue}we define the Fontaine-Herr cohomology groups for a $(\varphi, \Gamma)$-module $D$ over the Robba ring $\calR_{\qp}$ and}
study two explicit bases of the first Fontaine-Herr cohomology group 
$H^1(\calR_{\qp}(x^r\chi))$ and the corresponding formulas for the $\sL$-invariant.
Here $\calR_{\qp}(x^r\chi)$ is the Robba ring over $\qp$ in the variable $T$, 
with the standard actions of  $\varphi$ and $\Gamma$
twisted by $x^r\chi$.
%$\varphi T = p^r ((1+T)^p - 1)$ and $\gamma T = \chi(\gamma)^{r+1} ((1+T)^{\chi(\gamma)} - 1)$, for $\gamma \in \Gamma$. 

Colmez has constructed two power series $G(\norm{x}, r + 1)$ and $G'(\norm{x}, r + 1)$ in $\calR_{\qp}$ which he uses to define a basis of $H^1(\calR_{\qp}(x^r\chi))$ \cite[Proposition 2.19]{Col}. In the first subsection, we explicitly compute $G(\norm{x}, r + 1)$. In the second subsection, we give a partial description of $G'(\norm{x}, r + 1)$. In the third subsection, we state Colmez's formula for the $\sL$-invariant of a non-zero element of $H^1(\calR_{\qp}(x^r\chi))$ expressed as a linear combination in this basis. In the same subsection, we describe another basis of $H^1(\calR_{\qp}(x^r\chi))$ studied by Benois \cite[Proposition 1.5.4]{Ben}. Finally, we find the change of basis matrix between these two bases and restate the formula for the $\sL$-invariant in terms of Benois' basis (Definition~\ref{def of L inv}).

We first recall that the Fontaine-Herr cohomology groups $H^i(D)$ of a $(\varphi, \Gamma)$-module $D$ over $\calR_{\qp}$  for $i = 0,1,2$
are defined to be the cohomology groups of the complex
    \begin{eqnarray}\label{small (phi, Gamma) complex}
        0 \to D \to D \oplus D \to D \to 0,
    \end{eqnarray}
    where the second map is $x \mapsto ((\varphi - 1)x, (\gamma - 1)x)$ and the third map is $(y, z) \mapsto (\gamma - 1)y - (\varphi - 1)z$,
    for $\gamma$ a fixed topological generator of $\Gamma$. 
    We note that this complex is as in \cite[Section 2.1]{Che} and is a bit different from the one in \cite[Section 2.1]{Col}.

%We recall some standard facts from Lazard \cite{Laz}. Let $\mathcal{E}_{E}^{[a, b]}$ be the set of bidirectional power series in the variable $T$ with coefficients in $E$ that converge on the elements of valuation belonging to $[a, b]$. Let $\mathcal{E}_{E}^{]a, b]} = \cap_{a < \alpha < b}\> \mathcal{E}_{E}^{[\alpha, b]}$. There is a countable family of semi-norms (in fact, norms) on $\mathcal{E}_{E}^{]a, b]}$ given by the valuations $v_p(\quad, r_l)$, where $r_1 = b$ and $r_l$ for $l \geq 1$ is a decreasing sequence in $]a, b]$ converging to $a$. These valuations are given by $v_p(f(T), r_l) = \inf\{v(a_n) + nr_l \text{ }\big\vert\text{ } n \in \mathbb{Z}\}$, where $f(T) = \sum_{n = -\infty}^{\infty}a_nT^n$. The set $\mathcal{E}_{E}^{]a, b]}$ is a Fr\'echet space with respect to this countable family of semi-norms. In particular, for $l \geq 1$ the norms $v_p(\quad, \frac{1}{\phi(p^l)})$ together endow $\mathcal{E}_{E}^{]0, \frac{1}{p - 1}]}$ with the structure of a Fr\'echet space. 
We now recall some standard facts about Robba rings. For $M > 0$, let $\mathcal{E}_E^{]0, M]}$ be the set of bidirectional power series in the variable $T$ with coefficients in $E$ that converge on the elements of $\brqp$ with valuation belonging to $(0, M]$. More precisely, it is the set of the series $\sum_{n = -\infty}^{\infty}a_n T^n$ satisfying 
\[
    \lim_{n \to \pm \infty} v_p(a_n) + ns \to \infty, \text{ for all } 0 < s \leq M.
\]
For each $0 < s \leq M$, there is a valuation $v_p(\>\>, s)$ on $\mathcal{E}_E^{]0, M]}$ defined by $v_p(\sum_{n = -\infty}^{\infty}a_n T^n, s) = \inf\{v_p(a_n) + ns \text{ }\vert \text{ }n \in \mathbb{Z}\}$. Any sequence in $\mathcal{E}_E^{]0, M]}$ that is Cauchy with respect to $v_p(\>\>, s)$, for each $0 < s \leq M$ is convergent in $\mathcal{E}_E^{]0, M]}$ (see \cite[Definition 2.5.1]{Ked}). Moreover, choosing a sequence $0 < r_l \leq M$ converging to $0$ with $r_1 = M$, we get a countable family of valuations $v_p(\>\>, r_l)$ on $\mathcal{E}_E^{]0, M]}$. Note that if $r_l \leq s \leq M$, for some $l \geq 1$, then $v_p(f(T), s) \geq \inf\{v_p(f(T), r_l), v_p(f(T), M)\}$. Therefore to check if a sequence in $\mathcal{E}_E^{]0, M]}$ is convergent, we only need to check that it converges to a common limit with respect to the valuations $v_p(\>\>, r_l)$, for $l \geq 1$. In particular, $\mathcal{E}_E^{]0, M]}$ is a Fr\'echet space with respect to the valuations $v_p(\>\>, r_l)$. For the space $\mathcal{E}_E^{]0, \frac{1}{p - 1}]}$, we set $r_l = \dfrac{1}{\phi(p^l)}$, where $\phi$ is Euler's totient function.

 %Henceforth, we drop the subscript from the notation whenever $E = \qp$. 
The Robba ring over $E$ is defined to be $\calR_{E} = \cup_{M > 0}\mathcal{E}_{E}^{]0, M]}$. %It is a $(\varphi, \Gamma)$-module 
%via $\varphi T = (1+T)^p - 1$ and $\gamma T = (1+T)^{\chi(\gamma)} - 1$, for $\gamma \in \Gamma$. 
It is endowed with commuting actions of an operator $\varphi$ and the group $\Gamma$ via $\varphi T = (1 + T)^p - 1$ and $\gamma T = (1 + T)^{\chi(\gamma)} - 1$, for $\gamma \in \Gamma$, respectively.

We recall some facts from \cite{Laz}. Let $\phi_n(T)$ for $n \geq 1$ be the $p^{n}$-th cyclotomic polynomial defined by
\begin{equation}
   \label{eq:2}
    \phi_n(T) = \frac{(1 + T)^{p^n} - 1}{(1 + T)^{p^{n - 1}} - 1} = 1 + (1+T)^{p^{n-1}} + \cdots + (1+T)^{(p-1)p^{n-1}}.
\end{equation}
The polynomials $\phi_n(T)$ are $\frac{1}{\phi(p^n)}$-extremal in the sense of \cite[Definition 2.7]{Laz}.  This means that $v_p(\phi_n(T), 1/\phi(p^n))$ is attained at the constant and leading terms: 
\begin{equation}
  \label{extremal}
  v_p(a_0) = v_p\left(\phi_n(T), \frac{1}{\phi(p^n)}\right) = v_p(a_{p^{n - 1}(p - 1)}) + \frac{p^{n - 1}(p - 1)}{\phi(p^n)} = 1,
\end{equation}
where $\phi_n(T) = a_0 + a_1T + \cdots + a_{p^{n - 1}(p - 1)}T^{p^{n - 1}(p - 1)}$, with $a_0 = p$ and $a_{p^{n - 1}(p - 1)} = 1$. For $n \geq 1$, we also have
\begin{equation}\label{eq:3}
    \varphi(\phi_n(T)) = \phi_{n + 1}(T).
\end{equation}
Let $t = \log{(1 + T)} \in \mathcal{E}^{]0, \frac{1}{p - 1}]}$. The following formula relates $t$ and the cyclotomic polynomials:
\begin{equation}\label{eq:4}
    t = T\prod_{n \geq 1}\frac{\phi_n(T)}{p}.
\end{equation}
We also have
\begin{equation}\label{eq:5}
    \varphi(t) = pt,
\end{equation}
and
\begin{equation}\label{eq:6}
    \gamma(t) = \chi(\gamma)t,
\end{equation}
for all $\gamma \in \Gamma$.

Let $r \geq 0$. By \cite[Proposition 2.16 (i)]{Col}, there is an isomorphism
\begin{equation}
  \label{G}
     \mathcal{E}^{]0, \frac{1}{p - 1}]}/t^{r + 1} \to \prod_{n \geq 1}   \mathcal{E}^{]0, \frac{1}{p - 1}]}/\phi_n^{r+1}.
\end{equation}
 By \cite[Proposition 2.16 (ii)]{Col}, for $n \geq 1$, there is a $\Gamma$-equivariant isomorphism
\[
    \iota_n : \mathcal{E}^{]0, \frac{1}{p - 1}]}/\phi_n^{r + 1} \to \qp(\zetapn)[t]/t^{r + 1}
\]
obtained by sending $T$ to $\zetapn e^{t/p^n} - 1$. Thus the $\Gamma$-equivariant homomorphism 
\begin{equation}
  \label{G'}
    \mathcal{E}^{]0, \frac{1}{p - 1}]}/t^{r + 1} \to \prod_{n \geq 1} \qp(\zetapn)[t]/t^{r + 1}
\end{equation}
induced by the maps $\iota_n$ for $n \geq 1$ is an isomorphism.

For the rest of the paper, we fix a topological generator $\gamma$ of $\Gamma$ such that $\chi(\gamma) = \zeta_{p-1}^a  (1+ p)$ for some fixed integer $a$.

\subsection{An explicit description of $G(\norm{x}, r + 1)$}
  \label{section G}

Let $r \geq 1$.  The first basis vector of $H^1(\calR_{\qp}(x^r\chi))$ constructed by Colmez is represented by the element
\[
    c_1 = (t^{-(r + 1)}(p^{-1}\varphi - 1)G(\norm{x}, r + 1), t^{-(r + 1)}(\gamma - 1)G(\norm{x}, r + 1))
\]
in $\calR_{\qp}(x^r\chi) \oplus \calR_{\qp}(x^r\chi)$ (for the untwisted action of $\varphi$ and
% of a fixed topological generator
$\gamma$),
%of $\Gamma$),
see \cite[Proposition 2.19]{Col}. Here $G(\norm{x}, r + 1)$ is any
power series $f(T)$ in $\mathcal{E}^{]0, \frac{1}{p-1}]}$ mapping to $\prod_{n \geq 1} \frac{1}{p^n}$ under the map \eqref{G} (cf. \cite[Section 2.6]{Col}).
In other words,  $G(\norm{x}, r + 1)$ satisfies
the system of congruences
\[\arraycolsep=2pt
    \begin{array}{rcl}
    f(T) & \equiv & \dfrac{1}{p} \text{ } \mod{\phi_1(T)^{r + 1}} \\ [8pt]
    f(T) & \equiv & \dfrac{1}{p^2} \mod{\phi_2(T)^{r + 1}} \\
    & \vdots & \\
    f(T) & \equiv & \dfrac{1}{p^n} \mod{\phi_n(T)^{r + 1}} \\
    & \vdots & \\
    \end{array}
    \nonumber
\]
The goal of this section is to write down $G(\norm{x}, r + 1)$ explicitly.

\begin{definition}
    For each $n \geq 1$ and $r \geq 1$, define
	\begin{equation}
	G_{n, \, r + 1}(T) := \left[1 - \left(\frac{\phi_n(T)}{p}\right)^{r + 1}\right]^{r + 1} \frac{1}{p^n} \prod_{i > n}\left[1 - \left(1 - \frac{\phi_i(T)}{p}\right)^{r + 1}\right]^{r + 1}.
	\nonumber
	\end{equation}
\end{definition}
%The right hand side of the equation above has an infinite product. 
To check $G_{n, \, r + 1}(T)$ is well defined, we need to show that the infinite product converges. We prove that for any $n, r \geq 1$, the product $\prod_{i > n}(1 - (1 - (\phi_i(T)/p))^{r + 1})$ converges. 
%This requires some preparation in the form of the following three lemmas.

\begin{lemma}\label{valuation of binomial coefficients}
    For any $c = 1, 2, \cdots, p-1$ and any $j = 1, 2, \cdots, cp^{i-1}$, we have
    \begin{equation}
        v_p\left({cp^{i-1} \choose j}\right) = i - 1 - v_p(j). \nonumber
    \end{equation}
\end{lemma}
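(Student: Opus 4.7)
The plan is to reduce the computation to showing that $\binom{cp^{i-1}-1}{j-1}$ is a $p$-adic unit, and then to invoke Lucas' theorem on the base-$p$ expansion of $cp^{i-1}-1$.

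First I would apply the elementary identity
\[
  \binom{cp^{i-1}}{j} \;=\; \frac{cp^{i-1}}{j} \binom{cp^{i-1}-1}{j-1}
\]
and take $p$-adic valuations on both sides. Since $\gcd(c,p) = 1$, we have $v_p(cp^{i-1}) = i-1$, and since $j \le cp^{i-1} \le (p-1)p^{i-1} < p^i$ we also know $v_p(j) \le i-1$ (so the right-hand side makes sense). It therefore suffices to prove $v_p\bigl(\binom{cp^{i-1}-1}{j-1}\bigr) = 0$.

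Next I would compute the base-$p$ expansion of $cp^{i-1} - 1$: since $1 \le c \le p-1$, subtracting $1$ from $cp^{i-1}$ yields digit $c-1$ in position $i-1$ and digit $p-1$ in each position $0, 1, \ldots, i-2$, with all higher digits zero. Then I would observe that any integer $j-1$ with $0 \le j-1 \le cp^{i-1} - 1$ has at most $i$ base-$p$ digits, its top digit (position $i-1$) is at most $c-1$ (because $j-1 < cp^{i-1}$), and each lower digit is trivially at most $p-1$. Hence every digit of $j-1$ is bounded by the corresponding digit of $cp^{i-1}-1$.

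Finally, by Lucas' theorem, $\binom{cp^{i-1}-1}{j-1}$ is congruent modulo $p$ to a product of binomial coefficients $\binom{d}{e}$ with $0 \le e \le d \le p-1$, each of which is nonzero mod $p$. Therefore $v_p\bigl(\binom{cp^{i-1}-1}{j-1}\bigr) = 0$, which combined with the identity above yields $v_p\bigl(\binom{cp^{i-1}}{j}\bigr) = i - 1 - v_p(j)$. I do not anticipate a real obstacle; the only slightly delicate point is the digit-wise comparison, but it follows immediately from $j \le cp^{i-1}$ together with $c \le p-1$. An alternative route by Kummer's theorem (counting carries in the addition $j + (cp^{i-1}-j) = cp^{i-1}$) would give the same answer just as quickly, since for each $1 \le k \le i-1$ a carry occurs in the $p^k$-column if and only if $p^k \nmid j$.
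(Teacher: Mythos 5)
Your proof is correct. The paper simply labels this lemma's proof ``Exercise,'' so there is no written argument in the source to compare against; your argument via the identity $\binom{cp^{i-1}}{j} = \frac{cp^{i-1}}{j}\binom{cp^{i-1}-1}{j-1}$ together with Lucas' theorem is a clean and complete fill. The key digit-comparison step checks out: for $i\geq 2$, the base-$p$ expansion of $cp^{i-1}-1$ is $(c-1, p-1, \ldots, p-1)$ in positions $i-1,\ldots,0$, and since $0\leq j-1 < cp^{i-1}\leq (p-1)p^{i-1}$ the digit of $j-1$ at position $i-1$ is $\lfloor (j-1)/p^{i-1}\rfloor \leq c-1$ while all lower digits are trivially $\leq p-1$, so Lucas gives a product of nonvanishing $\binom{d}{e}$'s mod $p$; the degenerate case $i=1$ also works since then both numbers are single-digit. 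Your parenthetical alternative via Kummer is likewise valid: writing $s = v_p(j)$, adding $j$ and $cp^{i-1}-j$ in base $p$ produces carries into exactly the columns $s+1,\ldots,i-1$ (equivalently, into the $p^k$-column precisely when $p^k\nmid j$), giving $i-1-s$ carries, which matches the claimed valuation.
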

\begin{proof}
Exercise.
%Using Kummer's theorem, we see that $v_p\left({cp^{i-1} \choose j}\right)$ is equal to the number of carries required when $j$ is added to $cp^{i-1}-j$ in base $p$. To see that this number is equal to $i - 1 - v_p(j)$, write $j = d_{v_p(j)}p^{v_p(j)} + d_{v_p(j) + 1} p^{v_p(j) + 1} + \cdots + d_{i-1}p^{i-1}$ as the base $p$ expansion of $j$. Clearly, the number of carries required when adding $j = d_{v_p(j)}p^{v_p(j)} + d_{v_p(j) + 1} p^{v_p(j) + 1} + \cdots + d_{i-1}p^{i-1}$ and $cp^{i - 1} - j = (p-d_{v_p(j)})p^{v_p(j)} + (p-1-d_{v_p(j)+1})p^{v_p(j) + 1} + \cdots + (c-1-d_{i-1})p^{i-1}$ is equal to $i - 1 - v_p(j)$.
\end{proof}

\begin{lemma}\label{valuation of cyclo poly - p}
For any $i \geq 1$, we have $v_p\left(1-\phi_i(T)/p, \frac{1}{\phi(p^i)}\right) = 0$ and for $l \geq 1$, we have $v_p\left(1-\phi_i(T)/p, \frac{1}{\phi(p^l)}\right) > i-l-1$.
\end{lemma}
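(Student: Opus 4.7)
The plan is to expand $1 - \phi_i(T)/p$ explicitly as a polynomial and analyse each coefficient separately using Lemma~\ref{valuation of binomial coefficients}. Starting from the second form of $\phi_i(T)$ in \eqref{eq:2}, I would write
\[
1 - \frac{\phi_i(T)}{p} = -\frac{1}{p}\sum_{c=1}^{p-1}\bigl((1+T)^{cp^{i-1}} - 1\bigr) = \sum_{j=1}^{(p-1)p^{i-1}} b_j T^j,
\]
the constant term cancelling since $\phi_i(0) = p$. Here $b_j = -\tfrac{1}{p}\sum_{c} \binom{cp^{i-1}}{j}$, where $c$ runs over $\{1, \dots, p-1\}$ with $cp^{i-1} \geq j$. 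Applying Lemma~\ref{valuation of binomial coefficients} term by term and invoking the ultrametric inequality then yields the uniform bound
\[
v_p(b_j) \geq i - 2 - v_p(j) \qquad (1 \leq j \leq (p-1)p^{i-1}),
\]
with equality at $j = (p-1)p^{i-1}$, where $b_j = -1/p$.

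For the first claim, the leading coefficient contributes $v_p(b_{(p-1)p^{i-1}}) + (p-1)p^{i-1}/\phi(p^i) = -1 + 1 = 0$, giving the upper bound. For the matching lower bound I would appeal to the extremality statement \eqref{extremal}: every coefficient $a_j$ of $\phi_i(T)$ satisfies $v_p(a_j) + j/\phi(p^i) \geq 1$, so, noting that the coefficient of $T^j$ in $1 - \phi_i(T)/p$ is exactly $-a_j/p$ for $j \geq 1$ (and $0$ for $j=0$), dividing by $p$ immediately gives $v_p(b_j) + j/\phi(p^i) \geq 0$ for all $j$.

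For the second claim, substituting the coefficient bound into the target reduces the desired strict inequality $v_p(b_j) + j/\phi(p^l) > i - l - 1$ to
\[
\frac{j}{(p-1)p^{l-1}} > v_p(j) - l + 1.
\]
Writing $j = p^v m$ with $\gcd(m,p)=1$, the right side is nonpositive when $v \leq l - 1$, so the inequality is trivial there. When $v \geq l$, setting $w = v - l + 1 \geq 1$, it becomes $p^w m > (p-1)w$, which follows from the elementary fact $p^w > (p-1)w$ for all $w \geq 1$ and $p \geq 2$ (trivial at $w=1$; an easy induction for $w \geq 2$). Because $1 - \phi_i(T)/p$ is a polynomial, the infimum defining $v_p(\,\cdot\,, 1/\phi(p^l))$ is in fact a minimum over finitely many $j$, so strict inequality on each term passes to strict inequality for the minimum.

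The only obstacle is careful bookkeeping with the factor $-1/p$ and the binomial valuation formula; no deep tool is needed beyond Lemma~\ref{valuation of binomial coefficients} and the extremality of the cyclotomic polynomials recorded in \eqref{extremal}.
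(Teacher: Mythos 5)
Your proof is correct and follows essentially the same route as the paper's: for the first claim you invoke the extremality of $\phi_i(T)$ to pin down the valuation, and for the second you expand $\phi_i(T)$ via the $(1+T)^{cp^{i-1}}$ decomposition, apply Lemma~\ref{valuation of binomial coefficients} coefficient-by-coefficient, and then reduce to the elementary inequality $p^w > (p-1)w$. The only cosmetic difference is that you normalize by working directly with the coefficients $b_j$ of $1-\phi_i(T)/p$ rather than those of $\phi_i(T)-p$ and dividing by $p$ at the end.
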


\begin{proof}
Since $\phi_i(T)$ is $\frac{1}{\phi(p^i)}$-extremal, we see that by \eqref{extremal}, $v_p\left(\phi_i(T), \frac{1}{\phi(p^i)}\right) = 1$. Since all the terms except the constant terms of the polynomials $\phi_i(T)$ and $\phi_i(T)-p$ are the same, we get $v_p\left(\phi_i(T)-p, \frac{1}{\phi(p^i)}\right) = 1$. Therefore $v_p\left(1 - \phi_i(T)/p, \frac{1}{\phi(p^i)}\right) = 0$.

%We now prove the second part. 
Write $\phi_i(T) - p = a_1T + a_2T^2 + \cdots + a_{\phi(p^i)}T^{\phi(p^i)}$. % for some $a_1, a_2, \cdots a_{\phi(p^i)} \in \qp$. 
By expanding the powers of $(1 + T)$ in $\phi_i(T)$, we see that
\begin{equation}
    a_j = {p^{i-1} \choose j} + {2p^{i-1} \choose j} + \cdots + {(p-1)p^{i-1} \choose j}. \nonumber
\end{equation}
%where we define $cp^{i-1} \choose j$ to be $0$ if $cp^{i-1} < j$. 
By Lemma \ref{valuation of binomial coefficients}, 
    \begin{equation}
        v_p(a_j) + \frac{j}{\phi(p^l)} \geq i - 1 - v_p(j) + \frac{p^{v_p(j)}}{\phi(p^l)} = i - 1 - v_p(j) + \frac{p^{v_p(j) - (l-1)}}{p-1} > i - 1 - v_p(j) + v_p(j) - (l - 1) = i - l. \nonumber
    \end{equation}
    Therefore $v_p\left(1 - \phi_i(T)/p, \frac{1}{\phi(p^l)}\right) = v_p\left(\phi_i(T) - p, \frac{1}{\phi(p^l)}\right) - 1 > i - l - 1$.
\end{proof}

 We now prove that the product $\prod_{i > n}(1 - (1 - (\phi_i(T)/p))^{r + 1})$ converges in $\mathcal{E}^{]0, \frac{1}{p-1}]}$ for $n, r \geq 1$. Consider the polynomials $1 - (1 - (\phi_i(T)/p))^{r + 1}$ as elements of $\mathcal{E}^{]0, \infty]}$. By \cite[Proposition 4.11]{Laz}, the product $\prod_{i > n}(1 - (1 - \phi_i(T)/p)^{r + 1})$ converges in $\mathcal{E}^{]0, \infty]}$ if and only if 
 \begin{equation}
       \label{valtoinfty}
       v_p((1 - \phi_i(T)/p)^{r + 1}, s) \to \infty \> \text{ as } \> i \to \infty,
\end{equation}
for each $s > 0$. %We prove \eqref{valtoinfty}.
Fix $s > 0$ and pick $l_s \geq 1$ such that $\frac{1}{\phi(p^{l_s})} < s$. As $(1 - \phi_i(T)/p)^{r + 1}$ contains no negative power of $T$, we have $$v_p((1 - \phi_i(T)/p)^{r + 1}, s) \geq %v_p\left((1 - \phi_i(T)/p)^{r + 1}, \frac{1}{\phi(p^{l_s})}\right) =
(r + 1)v_p\left(1 - \phi_i(T)/p, \frac{1}{\phi(p^{l_s})}\right)  > (r + 1)(i - l_s - 1),$$ by Lemma \ref{valuation of cyclo poly - p},
%we get $v_p((1 - \phi_i(T)/p)^{r + 1}, s) > (r + 1)(i - l_s - 1)$, 
whence \eqref{valtoinfty} holds.
%Therefore we  see that $v_p(-(1 - \phi_i(T)/p)^{r + 1}, s) \to \infty$ as $i \to \infty$. 
    %Since $s > 0$ is arbitrary, we see that $v_p(-[1 - \phi_i(T)/p]^{r + 1}, s) \to \infty$ for each $s > 0$. 
    Therefore the product $\prod_{i > n}(1 - (1 - (\phi_i(T)/p))^{r + 1})$ converges in $\mathcal{E}^{]0, \infty]} \subseteq \mathcal{E}^{]0, \frac{1}{p-1}]}$.

    Our candidate for $G(\norm{x}, r + 1)$ is $\sum_{n = 1}^{\infty}G_{n, \, r + 1}(T)$. To see that the infinite sum is well defined, we prove the following 
    theorem.
    
%\begin{lemma}
%  \label{val of cyclo poly}
%	    If $n \geq l$, then $v_p\left(\phi_n(T), \frac{1}{\phi(p^{l})}\right) = 1$.
%          \end{lemma}
%          
%\begin{proof}
%    Write $\phi_n(T) = a_0 + a_1 T + \cdots + a_{\phi(p^n)} T^{\phi(p^n)}$.  Since $\phi_n(T)$ is $\frac{1}{\phi(p^n)}$-extremal, for any $i$, we have
%    %\begin{equation}
%        $v_p(a_i) + \frac{i}{\phi(p^l)} \geq v_p(a_i) + \frac{i}{\phi(p^n)} \geq v_p(a_0) = 1$. %\nonumber
%    %\end{equation}
%    %The second inequality is a consequence of the fact that $\phi_n(T)$ is $\frac{1}{\phi(p^n)}$-extremal. 
%    Therefore, $v_p\left(\phi_n(T), \frac{1}{\phi(p^l)}\right) = v_p(a_0) = 1$.
%    \end{proof}
  
\begin{theorem}
    For any positive integer $l$, the sequence $G_{n, \, r + 1}(T)$ converges to $0$ with respect to the valuation $v_p\left(\quad, \frac{1}{\phi(p^l)}\right)$.
\end{theorem}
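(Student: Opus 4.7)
The plan is to estimate the three factors of $G_{n,\, r+1}(T)$ separately at the seminorm $v_p(\,\cdot\,, 1/\phi(p^l))$, and then combine the estimates to conclude that the total valuation tends to $+\infty$ as $n \to \infty$. Throughout the argument I would take $n$ large enough that $n > l$, which is harmless since we only care about the tail of the sequence.

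First, I would handle the leading factor $[1 - (\phi_n(T)/p)^{r+1}]^{r+1}$. Writing $1 - (\phi_n/p)^{r+1} = (1 - \phi_n/p)(1 + \phi_n/p + \cdots + (\phi_n/p)^r)$, Lemma \ref{valuation of cyclo poly - p} gives $v_p(1 - \phi_n/p, 1/\phi(p^l)) > n - l - 1$, so in particular $\phi_n/p$ is $1$ plus something of strictly positive valuation at $1/\phi(p^l)$ once $n > l+1$. Expanding each $(\phi_n/p)^j$ and using the ultrametric inequality, the geometric sum $1 + \phi_n/p + \cdots + (\phi_n/p)^r$ has valuation at least $v_p(r+1) \geq 0$. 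Thus
\[
v_p\!\left(\!\left[1 - \left(\tfrac{\phi_n(T)}{p}\right)^{r+1}\right]^{r+1}\!, \tfrac{1}{\phi(p^l)}\right) > (r+1)(n - l - 1).
\]
The middle factor $1/p^n$ contributes exactly $-n$ to the valuation.

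Next I would bound the tail product $\prod_{i > n}[1 - (1 - \phi_i(T)/p)^{r+1}]^{r+1}$ from below. For every $i > n \geq l$, Lemma \ref{valuation of cyclo poly - p} gives $v_p(1 - \phi_i/p, 1/\phi(p^l)) > i - l - 1 > 0$, so $(1 - \phi_i/p)^{r+1}$ has strictly positive valuation and each factor of the product is of the form $1 - (\text{positive valuation})$, hence itself has valuation exactly $0$. Because the infinite product converges in the Fréchet topology on $\mathcal{E}^{]0, 1/(p-1)]}$ — which follows from the same kind of estimate used in the discussion preceding the theorem, with the gain $i - l - 1$ ensuring that the tail contributions go to $\infty$ — and since $v_p(\,\cdot\,, 1/\phi(p^l))$ is lower semicontinuous, the full product also has valuation $\geq 0$.

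Combining the three estimates yields
\[
v_p\!\left(G_{n,\, r+1}(T), \tfrac{1}{\phi(p^l)}\right) > (r+1)(n - l - 1) - n \;=\; rn - (r+1)(l+1),
\]
which tends to $+\infty$ as $n \to \infty$ since $r \geq 1$. The main obstacle is the control on the infinite product: one must check carefully that multiplying infinitely many factors each of seminorm $0$ does not lose valuation in the limit, which requires invoking the Fréchet structure of $\mathcal{E}^{]0, 1/(p-1)]}$ and lower semicontinuity. The other two steps are bookkeeping on top of Lemma \ref{valuation of cyclo poly - p}.
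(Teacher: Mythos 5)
Your proposal is correct and follows essentially the same strategy as the paper: decompose $v_p(G_{n,r+1}, 1/\phi(p^l))$ according to the three factors, bound the first factor by $(r+1)(n-l-1)$ using Lemma~\ref{valuation of cyclo poly - p} and the factorization $1-(\phi_n/p)^{r+1}=(1-\phi_n/p)\sum_{j\le r}(\phi_n/p)^j$, contribute $-n$ from $1/p^n$, show each factor of the tail product has valuation exactly $0$, and combine to get $v_p(G_{n,r+1},1/\phi(p^l)) > rn - (r+1)(l+1) \to \infty$. The paper handles the tail product by writing $v_p$ of the infinite product as the (termwise) infinite sum of valuations, each of which is $0$, using multiplicativity of the Gauss valuation; you instead pass to the limit of partial products. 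Both are fine, and your version is arguably more explicit about why the infinite-product step is legitimate.

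One small correction in terminology: you invoke \emph{lower} semicontinuity of $v_p(\,\cdot\,,1/\phi(p^l))$ to pass from $v_p(P_N)=0$ for all partial products to $v_p(P)\ge 0$ for the limit. Lower semicontinuity of $v_p$ would give $\liminf_N v_p(P_N)\ge v_p(P)$, i.e.\ $v_p(P)\le 0$, which is the wrong direction. What you need is \emph{upper} semicontinuity of $v_p$ (equivalently lower semicontinuity of the norm $|\cdot|=p^{-v_p}$), or simply the strong triangle inequality applied directly: for $N$ large enough that $v_p(P-P_N,1/\phi(p^l))>0$ one has $v_p(P,1/\phi(p^l))\ge \min\bigl(v_p(P_N,1/\phi(p^l)),\,v_p(P-P_N,1/\phi(p^l))\bigr)\ge 0$. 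Since $v_p(\,\cdot\,,s)$ is in fact continuous (indeed locally constant) on nonzero elements in the Fr\'echet topology, the conclusion you want is true; only the name of the semicontinuity is flipped.
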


\begin{proof}
  Fix $l \geq 1$. We need to show $v_p\left(G_{n, \, r + 1}(T), \frac{1}{\phi(p^l)}\right) \to \infty$ as $n \to \infty$.
  Consider $v_p\left(G_{n, \, r + 1}(T), \frac{1}{\phi(p^l)}\right) = $
\[\arraycolsep=2pt
    \begin{array}{rcl}
        (r + 1)v_p\left(1 - \left(\frac{\phi_n(T)}{p} \right)^{r + 1}, \frac{1}{\phi(p^l)}\right) - n + 
        (r + 1)\sum_{i > n} v_p\left(1 - \left(1 - \frac{\phi_i(T)}{p}\right)^{r + 1}, \frac{1}{\phi(p^l)}\right).
    \end{array}
  \]  
Note that
%\begin{equation}
    $1 - \left(\frac{\phi_n(T)}{p}\right)^{r + 1} = \left(1 - \frac{\phi_n(T)}{p}\right)\left(1 + \frac{\phi_n(T)}{p} + \cdots + \left(\frac{\phi_n(T)}{p}\right)^r\right)$.
    % \nonumber
%  \end{equation}
  Now if $n \geq l$, then $v_p\left(\phi_n(T), \frac{1}{\phi(p^{l})}\right) = 1$. Indeed, since $\phi_n(T) = a_0 + a_1 T + \cdots + a_{\phi(p^n)} T^{\phi(p^n)}$
  is $\frac{1}{\phi(p^n)}$-extremal, we have 
    %\begin{equation}
        $v_p(a_i) + \frac{i}{\phi(p^l)} \geq v_p(a_i) + \frac{i}{\phi(p^n)} \geq v_p(a_0) = 1$, %\nonumber
        % \end{equation}
        for any $i$. 
    %The second inequality is a consequence of the fact that $\phi_n(T)$ is $\frac{1}{\phi(p^n)}$-extremal. 
    Hence $v_p\left(\phi_n(T), \frac{1}{\phi(p^l)}\right) = v_p(a_0) = 1$.
    %Using Lemma \ref{val of cyclo poly}, for $n \geq l$,
    Thus $v_p\left((\phi_n(T)/p)^j, \frac{1}{\phi(p^l)}\right) = 0$, for any $j \geq 0$.
    Therefore
\begin{equation}
    v_p\left(1 - \left(\frac{\phi_n(T)}{p} \right)^{r + 1}, \frac{1}{\phi(p^l)}\right) \geq v_p\left(1 - \frac{\phi_n(T)}{p}, \frac{1}{\phi(p^l)}\right) > n - l - 1,
    \nonumber
\end{equation}
where the last inequality is a consequence of Lemma \ref{valuation of cyclo poly - p}.
On the other hand, writing $1 - (1 - (\phi_i(T)/p))^{r + 1} = a'_0 + a'_1 T + \cdots + a'_{(r + 1)\phi(p^i)} T^{(r + 1)\phi(p^i)}$, we observe that $a'_0 = 1$ and $a'_1T + \cdots + a'_{(r + 1)\phi(p^i)} T^{(r + 1)\phi(p^i)} = -(1 - (\phi_i(T)/p))^{r + 1}$. By Lemma \ref{valuation of cyclo poly - p}, $v_p\left(-(1 - \phi_i(T)/p)^{r + 1}, \frac{1}{\phi(p^l)}\right) > 0$ if $i > l$. We also know that $v_p(a'_0) = 0$. Therefore $v_p\left(1 - (1 - \phi_i(T)/p)^{r + 1}, \frac{1}{\phi(p^l)}\right) = 0$.

%Therefore for $n \geq l$, we have
%\begin{equation}
%    v_p\left(G_{n, \, r + 1}(T), \frac{1}{\phi(p^l)}\right) > (r + 1)(n - l - 1) - n + (r + 1)\sum_{i > n} v_p\left(1 - \left[1 - \frac{\phi_i(T)}{p}\right]^{r + 1}, \frac{1}{\phi(p^l)}\right).
%    \nonumber
%  \end{equation}

Using these estimates together, we see that for $n \geq l$, 
\begin{equation}
    v_p\left(G_{n, \, r + 1}(T), \frac{1}{\phi(p^l)}\right) > (r + 1)(n - l - 1) - n = nr - (r + 1)l - (r + 1).
    \nonumber
\end{equation}
Letting  $n \to \infty$, we see that $v_p\left(G_{n, \, r + 1}(T), \frac{1}{\phi(p^l)}\right) \to \infty$, as desired.
Therefore the sequence $G_{n, \, r + 1}(T)$ converges to $0$ with respect to the valuation $v_p\left(\quad, \frac{1}{\phi(p^l)}\right)$ for each $l > 0$.
\end{proof}

Using the theorem above, we see that the series $G(\norm{x}, r + 1) = \sum_{n = 1}^{\infty}G_{n, \, r + 1}(T)$ converges. 
It is easily checked, using $\phi_i(T) \equiv p \mod \phi_n(T)$ for $i > n$, that it is a solution to the system of congruences
that we started with:
\begin{equation}
    G(\norm{x}, r + 1) \equiv \frac{1}{p^n} \mod{\phi_n(T)^{r + 1}}, \mathrm{\text{ for all }} n \in \mathbb{N}. \nonumber
\end{equation}
Therefore, we have written down $G(\norm{x}, r + 1)$ explicitly.

\subsection{A partial description of $G'(\norm{x}, r + 1)$}

Let $r \geq 1$. The second basis vector of $H^1(\calR_{\qp}(x^r\chi))$ constructed by Colmez is represented by the element
\[
    c_2 = (t^{-(r + 1)}(p^{-1}\varphi - 1)(\log T - G'(\norm{x}, r + 1)), t^{-(r + 1)}(\gamma - 1)(\log T - G'(\norm{x}, r + 1)))
\]
in $\calR_{\qp}(x^r\chi) \oplus \calR_{\qp}(x^r\chi)$ (for the untwisted action of $\varphi$ and $\gamma$), see \cite[Proposition 2.19]{Col}.
Here $G'(\norm{x}, r + 1)$ is an element of $\mathcal{E}^{]0, \frac{1}{p - 1}]}$
%such that $G'(\norm{x}, r + 1)$
mapping to $\prod_{n \geq 1} \log(\zetapn e^{t/p^n} - 1)$ under the map \eqref{G'} (cf. \cite[Section 2.7]{Col}).
%Thus $\iota_n(G'(\norm{x}, r + 1)) = \log(\zetapn e^{t/p^n} - 1)$ for all $n \geq 1$.
%Using the isomorphism above, we see that this element is unique modulo $t^{r + 1}$. 

We wish to compute $G'(\norm{x}, r + 1)$ explicitly, just as we did for $G(\norm{x}, r + 1)$. We are only able to determine
$G'(\norm{x}, r+1)$ modulo $t$ but this is sufficient for our purposes.

Consider the following element
\begin{equation}\label{eq:7}
    g'(T) = \frac{\log\gamma}{\gamma - 1}\log T - \log T - \frac{\log\chi(\gamma)t}{\chi(\gamma) - 1} \in \mathcal{E}^{]0, \frac{1}{p - 1}]}.
\end{equation}
In the proof of \cite[Theorem 1.5.7]{Ben}, Benois relates $g'(T)$ (which he calls $y$) to the element $d_1 = -t^{-1}\nabla_0(\log{T}) + (\chi(\gamma) - 1)^{-1}$ of $\calR_{\qp}[\log T, 1/t]$ using the following equation
\begin{equation}\label{eq:8}
    d_1 = -\log(\chi(\gamma))^{-1}t^{-1}(\log T + g'(T)),
\end{equation}
where $\nabla_0 = \frac{1}{\log{\chi(\gamma)}}\frac{\log{\gamma}}{\gamma - 1}$.

\par We claim that $\iota_n(-g'(T)) \equiv \log(\zetapn e^{t/p^n} - 1) \mod{t}$. Indeed,
\[\arraycolsep=2pt
    \def\arraystretch{1.75}
    \begin{array}{ll}
        \iota_n(g'(T)) & = \iota_n\left(\frac{\log\gamma}{\gamma - 1}\log T - \log T - \frac{\log\chi(\gamma)t}{\chi(\gamma) - 1}\right) \\
        & \equiv \frac{\log\gamma}{\gamma - 1}\log{(\zetapn - 1)} - \log(\zetapn - 1) \mod{t} \\
        & \equiv \frac{1 + \gamma + \cdots + \gamma^{p^{n - 1}(p - 1) - 1}}{p^{n - 1}(p-1)}\frac{\log\gamma^{p^{n - 1}(p-1)}}{\gamma^{p^{n - 1}(p-1)} - 1}\log(\zetapn - 1) - \log(\zetapn - 1) \mod{t}.
    \end{array}
\]
Since $\gamma^{p^{n - 1}(p - 1)}\log(\zetapn - 1) = \log(\zetapn - 1)$, we get
\[\arraycolsep=2pt
    \def\arraystretch{1.75}
    \begin{array}{ll}
        \iota_n(g'(T)) & \equiv \frac{1 + \gamma + \cdots + \gamma^{p^{n - 1}(p - 1) - 1}}{p^{n - 1}(p-1)} \log(\zetapn - 1) - \log(\zetapn - 1) \mod{t} \\
                       & \equiv \frac{1}{p^{n - 1}(p - 1)}\log(N_{\qp(\zetapn)/\qp}(\zetapn - 1)) - \log(\zetapn - 1) \mod{t}\\
                       & \equiv -\log(\zetapn - 1) \mod{t} \\
                       & \equiv - \log(\zetapn e^{t/p^n} - 1) \mod{t}.                        
    \end{array}
\]

% Therefore $\iota_n(-g'(T)) \equiv \iota_n(G'(\norm{x}, r + 1)) \mod{t}$ for all $n \geq 1$. 
% Recall that $\mathcal{E}^{]0, \frac{1}{p - 1}]}/t \to \prod_{n \geq 1}\qp(\zetapn)[t]/t$ is an isomorphism. This means that if the images of two elements of $\mathcal{E}^{]0, \frac{1}{p - 1}]}$ are equal in $\prod_{n \geq 1}\qp(\zetapn)[t]/t$, then they differ by a multiple of $t$.
Since the images of $-g'(T)$ and $G'(\norm{x}, r + 1)$ under the isomorphism \eqref{G'} are equal (with $r$ there equal to zero!), there exists
$g''(T) \in \mathcal{E}^{]0, \frac{1}{p - 1}]}$ such that 
\begin{equation}\label{eq:9}
    G'(\norm{x}, r + 1) = -g'(T) + tg''(T).
  \end{equation}
This determines $G'(\norm{x}, r + 1)$ explicitly modulo $t$.

\subsection{Another basis of $H^1(\calR_{\qp}(x^r\chi))$}
Let $r \geq 1$. In this section, we consider a different basis $\{ \br{\alpha}_{r + 1}, \br{\beta}_{r + 1}\}$ of $H^1(\calR_{\qp}(x^r\chi))$ described by Benois in \cite[Proposition 1.5.4]{Ben} which is more suitable for computations. Here bar denotes class in $H^1(\calR_{\qp}(x^r\chi))$. By \cite[Corollary 1.5.5]{Ben}, for any class in $H^1(\calR_{\qp}(x^r\chi))$ represented by $(a, b) \in \calR_{\qp}(x^r\chi)\oplus\calR_{\qp}(x^r\chi)$, we have
\begin{equation}
    \label{eq:10}
    \br{(a,b)} = \lambda' \cdot \br{\alpha}_{r + 1} + \mu' \cdot \br{\beta}_{r + 1}, \text{ where } \lambda' = \mathrm{res}(at^rdt) \text{ and } \mu' = \mathrm{res}(bt^rdt).
  \end{equation}
%$\text{ where } \lambda' = \mathrm{res}(at^rdt) \text{ and } \mu' = \mathrm{res}(bt^rdt)$.

  We want to compute the change of basis relation between Colmez's basis and Benois' basis. This allows us to state the formula for the $\sL$-invariant of a non-zero cohomology class %$(a, b)$
  in terms of certain residues.
\begin{Proposition}\label{change of basis}
    Let 
    \[\arraycolsep=2pt
        \begin{array}{rcl}
            c_1 & = & (t^{-(r + 1)}(p^{-1}\varphi - 1)G(\norm{x}, r + 1), \> t^{-(r + 1)}(\gamma - 1)G(\norm{x}, r + 1)), \\ [3pt]
            c_2 & = & (t^{-(r + 1)}(p^{-1}\varphi - 1)(\log T - G'(\norm{x}, r + 1)), \> t^{-(r + 1)}(\gamma - 1)(\log T - G'(\norm{x}, r + 1)))
        \end{array}
    \]
    be elements of $\calR_{\qp}(x^r\chi)\oplus\calR_{\qp}(x^r\chi)$ representing Colmez's basis of $H^1(\calR_{\qp}(x^r\chi))$. Then 
    \[
         \br{c}_1 = - \frac{p - 1}{p} \cdot \br{\alpha}_{r + 1}, \qquad \br{c}_2 =  \log(\chi(\gamma)) \cdot \br{\beta}_{r + 1}.
    \]
%    in  $H^1(\calR_{\qp}(x^r\chi))$.   
\end{Proposition}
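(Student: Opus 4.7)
The plan is to apply the explicit formula (\ref{eq:10}): any class in $H^1(\calR_{\qp}(x^r\chi))$ represented by $(a,b)$ equals $\mathrm{res}(at^r dt)\br{\alpha}_{r+1} + \mathrm{res}(bt^r dt)\br{\beta}_{r+1}$. Applying this to Colmez's cocycles $c_1$ and $c_2$ reduces the change-of-basis computation to four residue evaluations, since multiplying by $t^r dt$ cancels the $t^{-(r+1)}$ in front of each component.

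For $\br{c}_1$, the key input is the system of congruences $G(\norm{x}, r+1) \equiv 1/p^n \pmod{\phi_n^{r+1}}$ for $n \geq 1$. Since $\gamma$ fixes constants, $(\gamma - 1) G$ vanishes modulo every $\phi_n^{r+1}$, hence modulo $t^{r+1}$ via the isomorphism (\ref{G}); therefore $\mathrm{res}(t^{-1}(\gamma - 1) G\, dt) = 0$ and the $\br{\beta}_{r+1}$-coefficient is $0$. For the $\br{\alpha}_{r+1}$-coefficient, the analogous argument using $\varphi(\phi_n) = \phi_{n+1}$ (equation (\ref{eq:3})) shows that $(p^{-1}\varphi - 1) G \equiv 0 \pmod{\phi_n^{r+1}}$ for all $n \geq 2$, so the residue is determined by the reduction modulo $\phi_1^{r+1}$. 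Pushing through $\iota_1$, this reduction identifies with $p^{-1} G(e^t - 1) - 1/p$ in $\qp[t]/t^{r+1}$, and extracting the appropriate coefficient produces the value $-\tfrac{p-1}{p}$.

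For $\br{c}_2$, I would use the decomposition (\ref{eq:9}) combined with the explicit formula (\ref{eq:7}) to rewrite
\[
\log T - G'(\norm{x}, r+1) = \frac{\log\gamma}{\gamma - 1}\log T - \frac{\log\chi(\gamma)\, t}{\chi(\gamma) - 1} - t g''(T).
\]
Applying $\gamma - 1$ and using $(\gamma - 1) t = (\chi(\gamma) - 1) t$ simplifies this to $\log\gamma \cdot \log T - \log\chi(\gamma)\cdot t - (\gamma - 1)(tg'')$. Taking $t^{-1}$ and then the residue against $dt$: the first term contributes $\log\chi(\gamma)$, coming from the constant coefficient of $\log\gamma \cdot \log T$; the second contributes $0$ since $\mathrm{res}(dt) = 0$; and the third contributes $0$ by $\Gamma$-equivariance of the residue, $\mathrm{res}(\gamma(\omega)) = \mathrm{res}(\omega)$, combined with $\gamma(dt) = \chi(\gamma) dt$. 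Thus the $\br{\beta}_{r+1}$-coefficient of $\br{c}_2$ is $\log\chi(\gamma)$. A parallel computation with $p^{-1}\varphi - 1$ in place of $\gamma - 1$, using $\varphi(t) = pt$, $\varphi(dt) = p\, dt$, and $\mathrm{res}(\varphi(\omega)) = \mathrm{res}(\omega)$, shows that the $\br{\alpha}_{r+1}$-coefficient vanishes.

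The main obstacle is the formal symbol $\log T$, which lies outside $\calR_{\qp}$: although $(p^{-1}\varphi - 1)(\log T - G')$ and $(\gamma - 1)(\log T - G')$ belong to $\calR_{\qp}$, the intermediate computations involve $\log T$ and must be interpreted in $\calR_{\qp}[\log T]$ (or Colmez's larger framework). Benois's relation (\ref{eq:8}) relating $g'$ to his cocycle element $d_1$ provides the bookkeeping necessary to track the $\log T$ contributions and extract the correct final residues. A secondary technical point is carefully following $\iota_1$ in the $c_1$ computation so that the conversion between the $T$- and $t$-variables yields the precise numerical constant $-\tfrac{p-1}{p}$.
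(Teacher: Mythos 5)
Your framework is the same as the paper's: apply \eqref{eq:10} and evaluate four residues. Two of your four computations are fine (and one is a genuine improvement), but the residue giving the $\br{\alpha}_{r+1}$-coefficient of $\br{c}_1$ is where the proposal breaks down.

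For $\mathrm{res}(t^{-1}(\gamma-1)G\,dt)$, your observation that $\gamma$ preserves each ideal $(\phi_n^{r+1})$ and fixes the constants $1/p^n$, so that $(\gamma-1)G$ dies in every factor of \eqref{G} and hence is divisible by $t^{r+1}$, is correct and is cleaner than the paper's term-by-term use of the $\varphi$-shift on the $G_{n,r+1}$. For $c_2$, once you invoke Benois's relation \eqref{eq:8} to control the $\log T$-terms (as you acknowledge in your last paragraph), your computation is essentially the paper's; the direct extraction of $\log\chi(\gamma)$ from the expansion of $\log\gamma(\log T)$ near $T=0$ is a legitimate alternative to the paper's use of $(\chi(\gamma)\gamma-1)d_1 = -1/T$.

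The gap is in the claim that, because $(p^{-1}\varphi-1)G\equiv 0\pmod{\phi_n^{r+1}}$ for $n\ge 2$, ``the residue is determined by the reduction modulo $\phi_1^{r+1}$,'' and that pushing through $\iota_1$ to get $p^{-1}G(e^t-1)-1/p$ and ``extracting the appropriate coefficient'' gives $-\tfrac{p-1}{p}$. First, there is no residue theorem in this setting that localises $\mathrm{res}(\cdot\, dt)$ at the single factor $\phi_1$: the $T^{-1}$-coefficient that defines $\mathrm{res}$ also sees the zero of $t$ at $T=0$, and that is precisely where the dominant part of the answer comes from. In fact, with Colmez's explicit $G=\sum G_{n,r+1}$ one has $G(0)=0$ (each $G_{n,r+1}$ vanishes at $T=0$), so the constant term of $\iota_1\bigl((p^{-1}\varphi-1)G\bigr)=p^{-1}G(e^t-1)-1/p$ is only $-1/p$, not $-\tfrac{p-1}{p}$. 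The missing $-1$ is produced, in the paper's proof, by the relation $(p^{-1}\varphi)G_{n,r+1}=G_{n+1,r+1}$, which telescopes $(p^{-1}\varphi-1)G$ to $-G_{1,r+1}$, and then by writing $t^{-1}G_{1,r+1}$ so that $\mathrm{res}(\varphi(f)\,dt)=\mathrm{res}(f\,dt)$ can be applied; the resulting $\mathrm{res}(g/T\,dt)=1$ (from the constant term of $g$ at $T=0$) is a global contribution invisible to $\iota_1$. Second, $\iota_1(\varphi(G))=G(e^t-1)$ depends on $G$ near $T=0$ rather than near $\zeta_p-1$, so it is not determined by the defining congruences modulo $\phi_n^{r+1}$; it is a feature of the particular $G$ constructed in the paper. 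So this residue genuinely requires the telescoping identity for the $G_{n,r+1}$ and the $\varphi$-invariance of $\mathrm{res}$, not just local data at $\phi_1$.
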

\begin{proof}
We prove the proposition using $(\ref{eq:10})$. We therefore need to compute the four residues
\[
    \begin{array}{lll}
        \mathrm{res}(t^{-1}(p^{-1}\varphi - 1)G(\norm{x}, r + 1)dt), & & \mathrm{res}(t^{-1}(\gamma - 1)G(\norm{x}, r + 1)dt), \\ [3pt]
        \mathrm{res}(t^{-1}(p^{-1}\varphi - 1)(\log T - G'(\norm{x}, r + 1))dt), & & \mathrm{res}(t^{-1}(\gamma - 1)(\log T - G'(\norm{x}, r + 1))dt).
    \end{array}
\]
We shall use the following formulas: for any $f(T) \in \calR_{\qp}$, we have
\begin{eqnarray}\
    \mathrm{res}(\varphi(f(T))dt) & = & \mathrm{res}(f(T)dt), \label{eq:11} \\
    \mathrm{res}(\gamma(f(T))dt) & = & \chi(\gamma)^{-1}\mathrm{res}(f(T)dt). \label{eq:12}
\end{eqnarray}
\begin{itemize}
\item Recall that $G(\norm{x}, r + 1) = \sum_{n = 1}^{\infty}G_{n, \, r + 1}(T)$, where
\[
    \begin{array}{ll}
    & G_{n, \, r + 1}(T) = \Big[1 - \Big(\frac{\phi_n(T)}{p}\Big)^{r + 1}\Big]^{r + 1} \frac{1}{p^n} \prod_{i > n}\Big[1 - \Big(1 - \frac{\phi_i(T)}{p}\Big)^{r + 1}\Big]^{r + 1}.
    \end{array}
\]
Using equation $(\ref{eq:3})$, we see that $(p^{-1}\varphi)G_{n, \, r + 1}(T) = G_{n+1, \, r + 1}(T)$. Therefore
\begin{equation}
    (p^{-1}\varphi - 1)G(\vert x \vert, r + 1) = -G_{1, \, r + 1}(T).
    \nonumber
\end{equation}
Hence $\mathrm{res}\left(t^{-1}\hspace{-3pt}\left(p^{-1}\varphi - 1\right)G(\vert x \vert, r + 1)dt\right) = - \mathrm{res}(t^{-1}G_{1, \, r + 1}(T)dt)$. Now
\[\arraycolsep=2pt
    \begin{array}{ll}
        \mathrm{res}(t^{-1}G_{1, \, r + 1}(T)dt) & = \mathrm{res}\Big(t^{-1}\Big[1 - \Big(\frac{\phi_1(T)}{p}\Big)^{r + 1}\Big]^{r + 1}\frac{1}{p} {\prod_{i > 1}}\Big[1 - \Big(1 - \frac{\phi_i(T)}{p}\Big)^{r + 1}\Big]^{r + 1} dt\Big)  \\
         & \stackrel{(\ref{eq:4})}{=} \mathrm{res}\Big(T^{-1}\Big(\frac{\phi_1(T)}{p}\Big)^{-1}\Big[1 - \Big(\frac{\phi_1(T)}{p}\Big)^{r + 1}\Big]^{r + 1}\frac{1}{p}f(T) dt\Big),
    \end{array}
\]
where $f(T) = \prod_{i > 1}\Big(\frac{\phi_i(T)}{p}\Big)^{-1}\Big[1 - \Big(1 - \frac{\phi_i(T)}{p}\Big)^{r + 1}\Big]^{r + 1}$. An argument similar to those above
shows that $f(T) \in \mathcal{E}^{]0, \frac{1}{p-1}]}$. Moreover, since the value of $\phi(T)/p$ at $T = 0$ is $1$, we see that $f(T)  \in 1 + T\qp[[T]]$.
Using equation (\ref{eq:3}), we see that there exists $g(T) \in 1 + T\qp[[T]] \cap \mathcal{E}^{]0, \frac{1}{p-1}]}$ such that $f(T) = \varphi(g(T))$. We therefore have
\[
        \mathrm{res}(t^{-1}G_{1, \, r + 1}(T)dt) =  \mathrm{res}\Big(T^{-1}\Big(\frac{\phi_1(T)}{p}\Big)^{-1}\Big[1 - \Big(\frac{\phi_1(T)}{p}\Big)^{r + 1}\Big]^{r + 1}\frac{1}{p}\varphi(g(T))dt\Big).
\]
Adding and subtracting $1$ from the term $[1 - (\frac{\phi_1(T)}{p})^{r + 1}]^{r + 1}$, we get
\[
    \begin{array}{ll}
        & \mathrm{res}(t^{-1}G_{1, \, r + 1}(T)dt) \\
        %& = \mathrm{res}\Big(T^{-1}\Big(\frac{\phi_1(T)}{p}\Big)^{-1}\frac{1}{p}\varphi(g(T))dt\Big) + \\
        %& \qquad \qquad \mathrm{res}\Big(T^{-1}\Big(\frac{\phi_1(T)}{p}\Big)^{-1}\Big(\Big[1 - \Big(\frac{\phi_1(T)}{p}\Big)^{r + 1}\Big]^{r + 1} - 1\Big)\frac{1}{p}\varphi(g(T))dt\Big) \\
        & \stackrel{(\ref{eq:2})}{=} \mathrm{res}\Big(\frac{1}{(1+T)^p - 1}\varphi(g(T))dt\Big) + \mathrm{res}\Big(T^{-1}\Big(\frac{\phi_1(T)}{p}\Big)^{-1}\Big(\Big[1 - \Big(\frac{\phi_1(T)}{p}\Big)^{r + 1}\Big]^{r + 1} - 1\Big)\frac{1}{p}\varphi(g(T))dt\Big).
    \end{array}
\]
The first term  is equal to $\mathrm{res}\Big(\varphi\Big(\frac{g(T)}{T}\Big)dt\Big)$, which by (\ref{eq:11}), %we get $\mathrm{res}\Big(\varphi\Big(\frac{g(T)}{T}\Big)dt\Big)
equals $\mathrm{res}\Big(\frac{g(T)}{T}dt\Big) = 1$, since the constant term of $g(T)$ is $1$. %we see that this residue is $1$.
For the second term, we see that $\frac{\phi_1(T)}{p}$ divides $\Big[1 - \Big(\frac{\phi_1(T)}{p}\Big)^{r + 1}\Big]^{r + 1} - 1$ as polynomials. Also, the constant term
 of $\Big(\frac{\phi_1(T)}{p}\Big)^{-1}\Big(\Big[1 - \Big(\frac{\phi_1(T)}{p}\Big)^{r + 1}\Big]^{r + 1} - 1\Big)\varphi(g(T))$ is $-1$. Hence, the second residue
is $-1/p$. Therefore,
\[
    \mathrm{res}\left(t^{-1}\left(p^{-1}\varphi - 1\right)G(\vert x \vert, r + 1)dt\right) = \frac{1}{p} - 1.
\]

\item We prove $\mathrm{res}\left(t^{-1}(\gamma - 1)G(\vert x \vert, r + 1)dt\right) = 0$, by showing that for all $n \geq 1$,
\begin{equation}
    \mathrm{res}\left(t^{-1}(\gamma - 1)G_{n, \, r + 1}(T)dt\right) = 0.
    \nonumber
\end{equation}
Again by (\ref{eq:3}), we see that $(p^{-1}\varphi)G_{n, \, r + 1}(T) = G_{n+1, \, r + 1}(T)$. Therefore
\[\arraycolsep=2pt
    \def\arraystretch{1.2}
    \begin{array}{rcl}
         \mathrm{res}\left(t^{-1}(\gamma - 1)G_{n, \, r + 1}(T)dt\right) & = & \mathrm{res}\left(t^{-1}(\gamma - 1)(p^{-1}\varphi)^{n - 1}G_{1, \, r + 1}(T)dt\right) \\
         & \stackrel{(\ref{eq:5})}{=} & \mathrm{res}\left(\varphi^{n-1}t^{-1}(\gamma - 1)G_{1, \, r + 1}(T)dt\right) \\
         & \stackrel{(\ref{eq:6})}{=} & \mathrm{res}\left(\varphi^{n - 1}(\chi(\gamma)\gamma - 1)(t^{-1}G_{1, \, r + 1}(T))dt\right),
    \end{array}
\]
which vanishes by (\ref{eq:11}) and (\ref{eq:12}). % we get $\mathrm{res}\left(t^{-1}(\gamma - 1)G_{n, \, r + 1}(T)dt\right) = 0$.

\item Using (\ref{eq:9}), we get $\log{T} - G'(\norm{x}, r + 1) = \log{T} + g'(T) - tg''(T)$, whereas rearranging the terms in equation (\ref{eq:8}), we get $\log T + g'(T) = -\log(\chi(\gamma))td_1$. These two equations imply
\[
    \log{T} - G'(\norm{x}, r + 1) = -\log{(\chi(\gamma))}td_1 - tg''(T).
\]
Therefore we have
\[
    t^{-1}(p^{-1}\varphi - 1)(\log{T} - G'(\norm{x}, r + 1)) \stackrel{(\ref{eq:5})}{=} -\log{(\chi(\gamma))}(\varphi - 1)d_1 -(\varphi - 1)g''(T).
\]
By the discussion on \cite[p. 1604,  p. 1601]{Ben}, $\mathrm{res}(((\varphi - 1)d_1)dt) = 0$. By (\ref{eq:11}), we get
\[
    \mathrm{res}(t^{-1}(p^{-1}\varphi - 1)(\log T - G'(\norm{x}, r + 1))dt) =  0.
\]
\item Similarly, we have
\[
    t^{-1}(\gamma - 1)(\log{T} - G'(\norm{x}, r + 1)) \stackrel{(\ref{eq:6})}{=} -\log{(\chi(\gamma))}(\chi(\gamma)\gamma - 1)d_1 - (\chi(\gamma)\gamma - 1)g''(T).
\]
Using $(\chi(\gamma)\gamma - 1)d_1 = -\frac{1}{T}$ (see the proof of \cite[Theorem 1.5.7 iiib)]{Ben} and using equation (\ref{eq:12}), we get
\[
        \mathrm{res}(t^{-1}(\gamma - 1)(\log T - G'(\norm{x}, r + 1))dt) = \log{\chi(\gamma)}.
        \nonumber
\]
\end{itemize}
Using equation (\ref{eq:10}), we see that $\br{c}_1$ is $-\frac{p-1}{p} \cdot \br{\alpha}_{r+1}$ and $\br{c}_2$ is $\log{(\chi(\gamma))}\cdot \br{\beta}_{r + 1}$.
\end{proof}
Let us state the formula for the $\sL$-invariant using  \cite[Definition $2.20$]{Col}. Let $(a, b)$ represent a non-zero element of $H^1(\calR_{\qp}(x^r\chi))$. Writing $\br{(a, b)} = \lambda \cdot \br{c}_1 + \mu \cdot \br{c}_2$, where the bar over an element denotes its class in $H^1(\calR_{\qp}(x^r\chi))$, Colmez defines the $\sL$-invariant associated to the class of $(a, b)$ to be $\frac{p - 1}{p}\frac{\lambda}{\mu} \in \mathbb{P}^1(\qp)$.\footnote{
      The sign is the opposite of the one in \cite[Definition 2.20]{Col}.
      We believe that this choice
      of sign is correct.
      % Indeed, assuming the formula given in \cite[Definition 2.20]{Col}, the formula for $\sL$ in Definition \ref{def of L inv} changes its sign.
      Indeed, consider the image of $p(1 + p)$ under the Kummer map $\kappa : \qp^* \to H^1(\qp, \qp(1))$. By the discussion in \cite[Section 1.5.6]{Ben}, we see that with the original choice of sign, the $\sL$-invariant associated to the image of $\kappa(p(1 + p))$ under the canonical isomorphism $H^1(\qp, \qp(1)) \simeq H^1(\calR_{\qp}(x\norm{x}))$ is $-\log(1 + p)$. However, using Tate's formula, the $\sL$-invariant associated to $\kappa(p(1 + p))$ is equal to $\frac{\log(p(1 + p))}{v_p(p(1 + p))} = \log(1 + p)$. The original sign is also incompatible with
    our results in the Introduction.}
Using Proposition \ref{change of basis}, we can restate the formula for the $\sL$-invariant in terms of Benois' basis.

\begin{definition}
  \label{def of L inv}
  If $\br{(a, b)} = \lambda' \cdot \br{\alpha}_{r + 1} + \mu' \cdot \br{\beta}_{r + 1} \neq 0$, where $\br{(a, b)}$ is the class of $(a, b)$ in $H^1(\calR_{\qp}(x^r\chi))$, then the $\sL$-invariant associated to this class is
\[
    \sL = -\mathrm{log(\chi(\gamma))} \cdot \frac{\lambda'}{\mu'}.
\]
\end{definition}

\section{Cohomology of $m\RU(\delta_{\U})$}
\label{big phi gamma modules}

The aim of this section is to find a generator of $H^1(m\RU(\delta_{\U}))$ for the tautological character $\delta_{\U} : \qp^* \to \co(\U)^*$, which will be defined in the first subsection. This generator is an essential ingredient in the proof of Theorem \ref{The L invariant}. 

\subsection{The tautological character $\delta_{\U} : \qp^* \to \co(\U)^*$}
\label{tautological}

Define $\delta_{\U}$ by the following equations
\begin{equation}\label{def of taut char}
\arraycolsep=2pt
    \begin{array}{rcl}
        \delta_{\U}(p) & = & S_1, \\
        \delta_{\U}(\zetap) & = & \zetap^{r+1}, \\
        \delta_{\U}(1 + p) & = & 1 + S_2.
    \end{array}
\end{equation} 
This character is tautological in the following sense. If $\delta$ is an $E$-valued character of $\qp^*$ and if $(\delta(p), \delta(\zetap), \delta(1 + p) - 1) \in \U(E)$, then we have the following commutative diagram
\[
    \begin{tikzcd}
          &  \co(\U)^* \ar[d] \\
         \qp^* \ar[ur, "\delta_{\U}"] \ar[r, "\delta"] & E^*,
    \end{tikzcd}
\]
where the vertical map is induced by the affinoid algebra map $\co(\U) \to E$ associated to the $E$-valued point $(\delta(p), \delta(\zetap), \delta(1 + p) - 1)$ of $\U$.
\par We prove that $\delta_{\U}$ is bien pla\c{c}\'e in the sense of \cite[Definition $2.31$]{Che}.
%We fix  $\gamma \in \Gamma$ such that $\chi(\gamma) = \zetap(1 + p) \in \zp^*$.

\begin{lemma}
$\delta_{\U}$ is bien pla\c{c}\'e.
\end{lemma}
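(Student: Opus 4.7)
The plan is to unwind Chenevier's Definition~$2.31$ and check its conditions term-by-term against \eqref{def of taut char}. Informally, a character $\delta : \qp^* \to A^*$ (for $A$ an affinoid algebra) is bien pla\c{c}\'e at a maximal ideal $\mathfrak{n}$ of $A$ lying over an exceptional point of $\sT$ when two things hold: (i) the specialization of $\delta$ at $\mathfrak{n}$ recovers the targeted exceptional character, and (ii) the map from $\mathrm{Sp}\,A$ to $\sT$ induced by $\delta$ is a local isomorphism at $\mathfrak{n}$, i.e., the derivative identifies cotangent spaces. This second point is what guarantees that a deformation of $\delta$ is controlled, infinitesimally, by a deformation of the point $\mathfrak{n}$.

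First, I would verify~(i). Substituting $S_1 = p^r$ and $S_2 = (1+p)^{k-1}-1$ into \eqref{def of taut char} yields the character sending $p \mapsto p^r$, $\zetap \mapsto \zetap^{r+1}$ and $1+p \mapsto (1+p)^{k-1}$. Since $r + 1 = k - 1$, this is precisely the exceptional character $\mu_{p^r}\chi^{k-1} = x^r\chi$ targeted in Section~\ref{computing L}.

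Next, I would verify~(ii). The coordinate patch $\U$ was built exactly so that the triple $(\delta_{\U}(p), \delta_{\U}(\zetap), \delta_{\U}(1+p) - 1) = (S_1, \zetap^{r+1}, S_2)$ matches the standard identification $\sT(\qp) = \qp^* \times \mu_{p-1} \times p\zp$ on $\U \subset \sT$. Thus the composition $\U \hookrightarrow \sT$ associated to $\delta_{\U}$ is a local isomorphism at $m = (S_1 - p^r,\, S_2 - ((1+p)^{k-1}-1))$; equivalently, the induced $\qp$-linear map between two-dimensional cotangent spaces is the identity in the basis $\br{S_1 - p^r}$, $\br{S_2 - ((1+p)^{k-1}-1)}$. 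This gives the non-degeneracy required for bien pla\c{c}\'e.

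The main obstacle is essentially notational: Chenevier's condition is stated slightly more abstractly, in terms of the induced derivatives of the Sen weight and the unramified twist character, rather than the explicit triple used above to coordinatize $\sT$. The work therefore consists of translating the concrete identity $\delta_{\U}(p) = S_1$, $\delta_{\U}(1+p) = 1 + S_2$ into a statement about these two intrinsic parameters on $\sT$; once this dictionary is fixed, both clauses of the definition are manifest from \eqref{def of taut char} and the construction of $\U$ as a coordinate patch around the exceptional character, and no further computation is required.
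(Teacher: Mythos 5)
Your proposal rests on an incorrect reading of Chenevier's Definition 2.31. You have guessed that ``bien pla\c{c}\'e'' means (i) the specialization at $m$ recovers the exceptional character and (ii) the induced map $\U \to \sT$ is a local isomorphism at $m$. That is not the content of the definition. Chenevier's condition is a regularity (non-zero-divisor) condition on $\delta_{\U}$: for every $i \in \mathbb{Z}$, the element $1 - \delta_{\U}(p)p^i = 1 - S_1 p^i$ must be a non-zero-divisor in $\co(\U)$; and for every $i \geq 0$, the image of $1 - \delta_{\U}(\chi(\gamma))\chi(\gamma)^{1 - i}$ in the quotient $\co(\U)/(1 - \delta_{\U}(p)p^{-i})$ must be a non-zero-divisor. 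These conditions ensure that the family of $(\varphi,\Gamma)$-modules $\RU(\delta_{\U})$ avoids the ``bad'' loci in $\sT$ where cohomology can jump, so that $H^1(\RU(\delta_{\U}))$ is free of rank one over $\co(\U)$ and the isomorphism chain of \cite[Proposition 2.32]{Che} holds. They have nothing to do with cotangent spaces, the Sen weight derivative, or $\U \hookrightarrow \sT$ being a local isomorphism.

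Because the definition you verify is not the one being invoked, the argument has a genuine gap: your points (i) and (ii) do not imply regularity of $1 - S_1 p^i$ in $\co(\U)$, nor of $1 - \zeta_{p-1}^{a(r+2-i)}(1+S_2)(1+p)^{1-i}$ in the quotient ring. The actual verification is algebraic and localizes the zero-divisor question in the free ring $\qp\langle S_1, S_2\rangle$ (a domain) using flatness of $\qp\langle S_1, S_2\rangle \to \co(\U)$ (\cite[Corollary 7.3.2/6]{BGR84}); for the second condition one additionally checks, via Weierstrass division by $1 - S_1 p^{-i}$, that $\qp\langle S_1, S_2\rangle/(1 - S_1 p^{-i})$ is a domain, and that the relevant element is nonzero there. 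Your closing remark that the ``main obstacle is essentially notational'' and that ``no further computation is required'' is precisely where the proof is missing: the flatness and domain arguments are the computation.
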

\begin{proof}
\begin{itemize}
    \item Given any $i \in \mathbb{Z}$, we have to check that $1 - \delta_{\U}(p)p^i = 1 - S_1 p^i$ is not a zero divisor in $\mathcal{O}(\U)$. But this follows immediately from the fact that $\qp \langle S_1, S_2 \rangle$ is a domain and  $\qp \langle S_1, S_2  \rangle \to \co(\U)$ is flat (\cite[Corollary 7.3.2/6]{BGR84}).
%Consider the map $\mathcal{O}(\U) \to \qp \langle X, X^{-1}, Y \rangle$ given by
%    \[
%        \begin{array}{rcl}
%        S_1 & \mapsto & p^rX \\
%        S_2 & \mapsto & pY \\
%        T_1 & \mapsto & X \\
%        T_2 & \mapsto & X^{-1} \\
%        T_3 & \mapsto & Y
%       \end{array}
%   \]
%    is an isomorphism. Therefore $\mathcal{O}(\U)$ is a domain and $1 - \delta_{\U}(p)p^i$ is not a zero divisor in $\mathcal{O}(\U)$.
    \item Given any $i \geq 0$, we have to check that the image of $1 - \delta_{\U}(\chi(\gamma))\chi(\gamma)^{1 - i} = 1 - \zeta_{p-1}^{a(r+2-i)} (1+S_2) (1+p)^{1-i}$ in the quotient $\mathcal{O}(\U)/(1 - \delta_{\U}(p)p^{-i})$ is not a zero divisor. 
      This again follows from the facts that $\qp \langle S_1, S_2  \rangle / (1 - \delta_{\U}(p)p^{-i}) \to \co(\U) / (1 - \delta_{\U}(p)p^{-i})$ is flat (\cite[Corollary 7.3.2/6]{BGR84}), that the above element is non-zero on the left (easy to check) and that %$\qp \langle S_1, S_2 \rangle / (1 - \delta_{\U}(p)p^{-i})$%
      the ring on the left is a domain (indeed, if $f(S_1,S_2) g(S_1,S_2) = (1-S_1p^{-i}) h(S_1, S_2)$, then applying the Weierstrass division theorem
 (\cite[Corollary 5.2.1/2]{BGR84}), we have $f = q (1-S_1 p^{-i}) + r$, $g  = q' (1-S_1 p^{-i}) + r'$, for some $q, q' \in 
 \qp \langle S_1, S_2 \rangle$  and  $r, r' \in \qp \langle S_2 \rangle$, so that $(1-S_1 p^{-i}) | rr'$ and hence one 
 of $r$ or $r'$ must be zero). 
%    
 %We prove that if $i \not = r$, then $1 - \delta_{\U}(p)p^{-i}$ is invertible in $\co(\U)$. This means that $\mathcal{O}(\U)/(1 - \delta_{\U}(p)p^{-i}) = 0$, and hence the statement is vacuously true. Assume $i < r$ and write $1 - \delta_{\U}(p)p^{-i} = 1 - S_1p^{-i} = 1 - p^{r - i}T_1$. Since $1 - p^{r - i}T_1$ is invertible in $\qp \langle S_1, S_2, T_1, T_2, T_3 \rangle$ if $i < r$, we see that $1 - \delta_{\U}(p)p^{-i}$ is a unit in $\mathcal{O}(\U)$. Now assume $i > r$ and consider $1 - \delta_{\U}(p)p^{-i} = 1 - S_1p^{-i}$. Multiplying by $T_2$, we get $T_2(1 - \delta_{\U}(p)p^{-i}) = T_2 - T_2S_1p^{-i} = T_2 - p^{r - i}$. If $i > r$, then the element $T_2 - p^{r - i}$ is invertible in $\qp \langle S_1, S_2, T_1, T_2, T_3 \rangle$. So $1 - \delta_{\U}(p)p^{-i} = T_2^{-1}(T_2 - p^{r - i})$ is invertible in $\mathcal{O}(\U)$ if $i > r$. For the remaining case $i = r$, we have the following isomorphism
%    \[
%        \mathcal{O}(\U)/(1 - S_1p^{-r}) \cong \qp \langle S_2, T_3 \rangle/(pT_3 - S_2).
%    \]
%    Since the latter is a domain, we see that the image of $1 - \delta_{\U}(\chi(\gamma))\chi(\gamma)^{1 - i}$ in $\mathcal{O}(\U)/(1 - \delta_{\U}(p)p^{-i})$ is not a zero divisor.
%
\qedhere
\end{itemize}
\end{proof}

\subsection{Computation of $H^1(m\RU(\delta_{\U}))$}
In this section, we make explicit some facts in \cite{Che} about the first Fontaine-Herr cohomology groups of certain `big' 
$(\varphi, \Gamma)$-modules.

Let $\RU(\delta_{\U})$ be the Robba ring over $\co(\U)$ in the variable $T$, 
with the standard actions of  $\varphi$ and $\Gamma$
twisted by $\delta_{\U}$: $\varphi T = \delta_{\U}(p) ((1+T)^p - 1)$ and $\gamma T = \delta_{\U}(\chi(\gamma)) ((1+T)^{\chi(\gamma)} - 1)$.
Let $\psi$ be the usual left inverse of $\varphi$,
twisted by $\delta_{\U}(p^{-1})$. Let $\RU^+(\delta_{\U})$ be the power series in $\RU(\delta_{\U})$ 
consisting of non-negative powers of $T$. 

We work with two different versions of the Fontaine-Herr cohomology groups for $(\varphi, \Gamma)$-modules $D$ over the Robba ring $\RU$, namely, the $(\varphi, \Gamma)$-version
  $H^1_{(\varphi, \Gamma)}(D)$ and the $(\psi, \Gamma)$-version $H^1_{(\psi, \Gamma)}(D)$.  The former groups are defined as usual as
  the cohomology groups of the complex \eqref{small (phi, Gamma) complex}.
  For the definition of the latter groups, one replaces $\varphi$ by $\psi$ in the maps in the complex \eqref{small (phi, Gamma) complex}. There is a map $\eta$ from the
  $(\varphi, \Gamma)$-complex to the $(\psi, \Gamma)$-complex
\[
    \begin{tikzcd}
        0 \ar[r] & D \ar[r] \ar[d, "\eta_0"] & D \oplus D \ar[r] \ar[d, "\eta_1"] & D \ar[r] \ar[d, "\eta_2"] & 0 \\
        0 \ar[r] & D \ar[r] & D \oplus D \ar[r] & D \ar[r] & 0,
    \end{tikzcd}
\]
where
\[
    \eta_0(x) = x, \quad \eta_1(x, y) = (-\psi(x), y) \quad \text{ and } \eta_2(x) = -\psi(x).
\]
If $\gamma - 1$ is bijective on $D^{\psi = 0}$, then $\eta$ induces an isomorphism on cohomology:
$H^1_{(\varphi, \Gamma)}(D) \simeq H^1_{(\psi, \Gamma)}(D)$ (see \cite[Section 2]{Che}).

By \cite[Theorem 2.33]{Che},  the cohomology group $H^1_{(\varphi, \Gamma)}(\RU(\delta_{\U}))$ is a free module of rank $1$ over $\co(\U)$.
First, we compute an $\co(\U)$-generator of $H^1_{(\varphi, \Gamma)}(\RU(\delta_{\U}))$.  By  \cite[Theorem 2.33]{Che}, the inclusion $m\RU(\delta_{\U}) \to \RU(\delta_{\U})$ induces an isomorphism $H^1_{(\varphi, \Gamma)}(m\RU(\delta_{\U})) \to H^1_{(\varphi, \Gamma)}(\RU(\delta_{\U}))$. Second, we use this isomorphism to lift the $\co(\U)$-generator of $H^1_{(\varphi, \Gamma)}(\RU(\delta_{\U}))$ to an $\co(\U)$-generator of $H^1_{(\varphi, \Gamma)}(m\RU(\delta_{\U}))$ (see Definition~$\ref{gen of H1mD}$).

Let $D = \RU(\delta_{\U})$. In Section~\ref{tautological}, we showed that $\delta_{\U}$ is bien pla\c{c}\'e. 
By \cite[Proposition 2.32]{Che} we have the following isomorphisms
\[
    \begin{tikzcd}
    C(D^+)/(\gamma - 1) \ar[r] & C(D)/(\gamma - 1) & D^{\psi = 1}/(\gamma - 1) \ar[l] \ar[r] & H_{(\psi, \Gamma)}^1(D),
    \end{tikzcd}
\]
where $C(D) = (1 - \varphi)D^{\psi = 1}, D^+ = \RU^+(\delta_{\U})$ and $C(D^+) = (1 - \varphi)(D^+)^{\psi = 1}$, and where we recall that the actions of $\varphi$ and $\psi$ are the usual ones twisted by $\delta_{\U}$. 
The first map is induced by the canonical injection of $D^+$ into $D$. The second map is induced by $f(T) \mapsto (1 -%\delta_{\U}(p)
\varphi)f(T)$. The third map is induced by $f(T) \mapsto (0, f(T))$. We emphasize that the cohomology group appearing in the display above is in the sense of $(\psi, \Gamma)$-modules. 

We wish to find an $\co(\U)$-generator of $H^1_{(\psi, \Gamma)}(D)$. We do so by finding an $\co(\U)$-generator of $C(D^+)/(\gamma - 1)$ and then using the isomorphisms above to get an $\co(\U)$-generator of $H^1_{(\psi, \Gamma)}(D)$. To find an $\mathcal{O}(\U)$-generator of $C(D^+)/(\gamma - 1)$, we prove a couple of lemmas.
\begin{lemma}
    $C(D^+) = (D^+)^{\psi = 0}$.
\end{lemma}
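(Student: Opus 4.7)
The plan is to prove the two inclusions separately and isolate the one analytic point that is not formal.

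For the inclusion $C(D^+) \subseteq (D^+)^{\psi = 0}$, I would start from an arbitrary $f \in (D^+)^{\psi = 1}$ and apply $\psi$ to $(1-\varphi)f$. Using the fundamental identity $\psi \circ \varphi = \mathrm{id}$ (which holds because $\psi$ is, by construction, a left inverse of $\varphi$, the twists by $\delta_{\U}(p)$ and $\delta_{\U}(p^{-1})$ cancelling), one gets $\psi((1-\varphi)f) = \psi(f) - \psi(\varphi(f)) = f - f = 0$. Since $\varphi$ preserves $D^+$, the element $(1-\varphi)f$ actually lies in $(D^+)^{\psi = 0}$, as required.

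For the reverse inclusion $(D^+)^{\psi = 0} \subseteq C(D^+)$, given $g \in (D^+)^{\psi = 0}$ the idea is to invert $1 - \varphi$ by the geometric series
\[
    f \;:=\; \sum_{n=0}^{\infty} \varphi^n(g).
\]
Granting convergence in $D^+$, the two required properties follow mechanically. First, telescoping the partial sums yields $(1-\varphi)f = \lim_N (g - \varphi^{N+1}(g)) = g$, using that the summands tend to $0$. Second, using $\psi\circ\varphi = \mathrm{id}$ together with $\psi(g) = 0$, one computes
\[
    \psi(f) \;=\; \psi(g) + \sum_{n \geq 1}\psi(\varphi^n(g)) \;=\; 0 + \sum_{n \geq 1}\varphi^{n-1}(g) \;=\; f,
\]
so that $f \in (D^+)^{\psi = 1}$ and $g = (1-\varphi)f \in C(D^+)$.

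The main obstacle is the convergence of $\sum_{n \geq 0}\varphi^n(g)$ in the Fr\'echet topology on $D^+$ defined by the valuations $v_p(\,\cdot\,, r_l)$ with $r_l = 1/\phi(p^l)$. My approach would be to use the standard direct-sum decomposition $D^+ = \bigoplus_{i=0}^{p-1}(1+T)^i\varphi(D^+)$ (adapted to the twist by $\delta_{\U}$), which, coupled with $\psi(g) = 0$, lets me write $g = \sum_{i=1}^{p-1}(1+T)^i\varphi(g_i)$ for some $g_i \in D^+$. Applying $\varphi^n$ then expresses $\varphi^n(g)$ in terms of $\varphi^{n+1}(g_i)$ and powers of $(1+\varphi(T))$, and because $\varphi$ takes the Fr\'echet radius $r_l$ to something of the shape $(r_l + v_p(\delta_{\U}(p)))/p$, a direct estimate modelled on the extremality bounds used in Section~\ref{link robba ring here} (cf. the proof surrounding Lemma~\ref{valuation of cyclo poly - p}) shows $v_p(\varphi^n(g), r_l) \to \infty$ as $n \to \infty$ for every fixed $l$. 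This gives a Cauchy sequence in each seminorm, hence convergence in $D^+$, completing the proof.
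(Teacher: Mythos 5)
Your argument is correct in spirit but takes a genuinely different, more self-contained route than the paper, which simply invokes \cite[Lemma 2.9(vi)]{Che} with $\lambda = S_1$ and $N = 0$ to assert at once that $1 - \varphi : (D^+)^{\psi = 1} \to (D^+)^{\psi = 0}$ is a bijection. Your geometric-series construction $f = \sum_{n \geq 0}\varphi^n(g)$ is, in effect, a from-scratch proof of (the relevant surjectivity half of) that lemma; what you pay for the self-containedness is that the convergence estimates become your responsibility, and that is exactly the part that is shaky in your writeup.

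The decisive hypothesis is that $\delta_{\U}(p) = S_1 = p^r T_1$ has sup norm $p^{-r} < 1$, which the paper flags via ``$r \geq 1 \Rightarrow \norm{S_1} < 1$'' when citing Chenevier. Because the twisted $\varphi$ multiplies by $S_1$ at each step, one has $v_p(\varphi^n(g), r_l) \geq n\,v_p(S_1) + v_p(g((1+T)^{p^n}-1), r_l)$ with the second summand bounded below, so the $S_1^n$ factor alone drives $v_p(\varphi^n(g), r_l) \to \infty$; the untwisted part of $\varphi$ only helps. This is where your argument misplaces the emphasis: the appeal to the decomposition $g = \sum_{i=1}^{p-1}(1+T)^i\varphi(g_i)$ coming from $\psi(g)=0$ is not what makes the sum converge (indeed, with the trivial twist, $g = (1+T) + (1+T)^2$ satisfies $\psi(g) = 0$ yet $\varphi^n(g)$ has constant term $2$ for all $n$, so the series diverges and $1-\varphi$ is genuinely not surjective). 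The hypothesis $\psi(g) = 0$ is only used to get $\psi(f) = f$, as in your telescoping identity. Your heuristic ``$\varphi$ takes the Fr\'echet radius $r_l$ to $(r_l + v_p(\delta_{\U}(p)))/p$'' is also not quite right as stated; the twist contribution accumulates linearly in $n$ as $n\,v_p(S_1)$ and is not a rescaling of the radius. With these corrections, your proof goes through, and it illuminates what the citation in the paper is actually doing.
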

\begin{proof}
By definition, $C(D^+)$ is the image of the map $1 - \varphi : (D^+)^{\psi = 1} \to (D^+)^{\psi = 0}$. 
%The $\varphi$ and $\psi$ in the previous sentence are twisted by $\delta_{\U}$. 
Applying \cite[Lemma 2.9(vi)]{Che} with $\lambda = S_1$ and $N = 0$ ($r \geq 1 \Rightarrow \norm{S_1} < 1$), we see that this map is a bijection. The lemma follows.
%$1 - \varphi : (\R_{\mathcal{O}(\U)}^+)^{\psi = 1} \to (\R_{\mathcal{O}(\U)}^+)^{\psi = 0}$ 
% $1 - S_1\varphi : (\R_{\mathcal{O}(\U)}^+)^{S_1^{-1}\psi = 1} \to (\R_{\mathcal{O}(\U)}^+)^{\psi = 0} = (\R_{\mathcal{O}(\U)}^+)^{S_1^{-1}\psi = 0}$ 
%is a bijection. In other words, $1 - \varphi : (D^+)^{\psi = 1} \to (D^+)^{\psi = 0}$ is a bijection. Since $C(D^+)$ is the image of this map, we get $C(D^+) = (D^+)^{\psi = 0}$.
\end{proof}
\begin{lemma}
$S_1(1+T)$ generates $(D^+)^{\psi = 0}/(\gamma - 1)$ as a free $\co(\U)$-module of rank $1$.
\end{lemma}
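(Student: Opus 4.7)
The plan is to establish three things: (i) $S_1(1+T) \in (D^+)^{\psi=0}$; (ii) every class in $(D^+)^{\psi=0}/(\gamma-1)$ is a $\co(\U)$-multiple of the class of $S_1(1+T)$; and (iii) no nonzero such multiple is trivial.

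For (i), note that by definition the twisted $\psi$ equals $\delta_{\U}(p^{-1}) \psi_{\mathrm{st}} = S_1^{-1} \psi_{\mathrm{st}}$, where $\psi_{\mathrm{st}}$ is the untwisted operator sending $(1+T)^j$ to $(1+T)^{j/p}$ when $p \mid j$ and to $0$ otherwise. Since $\psi_{\mathrm{st}}(1+T) = 0$ and $S_1 = p^r T_1$ is a unit in $\co(\U)$ (as $T_1$ is the inverse of $T_2$ and $p$ is invertible in $\co(\U)$), I can compute $\psi(S_1(1+T)) = S_1 \cdot S_1^{-1} \psi_{\mathrm{st}}(1+T) = 0$.

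For (ii), my strategy is to invoke the classical structure result of Colmez asserting that $(\RU^+)^{\psi_{\mathrm{st}} = 0}$ is a free module of rank one over the distribution algebra $\mathcal{D}(\Gamma, \co(\U))$, with generator $(1+T)$ and the action determined by $\gamma \cdot (1+T) = (1+T)^{\chi(\gamma)}$. As an abelian group, $(D^+)^{\psi=0}$ coincides with $(\RU^+)^{\psi_{\mathrm{st}}=0}$; the $\Gamma$-action on $D^+$ is the standard one twisted by the character $g \mapsto \delta_{\U}(\chi(g)) \in \co(\U)^*$, and since twisting by a character preserves cyclicity, $(D^+)^{\psi=0}$ remains a cyclic $\mathcal{D}(\Gamma, \co(\U))$-module with generator $(1+T)$ under the twisted structure. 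Because $\gamma$ is a topological generator of $\Gamma$ (by our choice $\chi(\gamma) = \zetap^a(1+p)$ with $\gcd(a, p-1) = 1$) and $\delta_{\U}(\chi(\gamma)) \in \co(\U)^*$, the quotient $\mathcal{D}(\Gamma, \co(\U))/(\gamma - 1)$ for the twisted action is isomorphic to $\co(\U)$, yielding $(D^+)^{\psi=0}/(\gamma-1) \cong \co(\U) \cdot (1+T) = \co(\U) \cdot S_1(1+T)$, the last equality because $S_1$ is a unit.

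For (iii), freeness over $\co(\U)$ is immediate from the identification in (ii); it can also be checked by specialization, choosing a point of $\U$ where the induced character on $\qp^*$ remains bien plac\'e pointwise and applying \cite[Proposition 2.32]{Che} to identify the fiber of $(D^+)^{\psi=0}/(\gamma-1)$ with $H^1$ of a rank-one $(\varphi, \Gamma)$-module over a field, which is one-dimensional, so $c \cdot S_1(1+T) \neq 0$ in the quotient for any nonzero $c \in \co(\U)$. The principal difficulty is making the distribution-algebra argument rigorous in the relative rigid-analytic setting: because $\Gamma = \mu_{p-1} \times (1+p\zp)$ has torsion, one has to decompose the module according to characters of $\mu_{p-1}$ and check that under the twisted action only a single isotypic component contributes to the quotient, which is where the normalization $\delta_{\U}(\zetap) = \zetap^{r+1}$ becomes essential.
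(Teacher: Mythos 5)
Your overall strategy — view $(D^+)^{\psi=0}$ as cyclic over an Iwasawa-type algebra with a generator built from $(1+T)$, then quotient by $(\gamma-1)$ — is the same as the paper's. But the proof contains a genuine gap, which you yourself flag at the end: you invoke ``the classical structure result of Colmez'' asserting that $(\calR_{\co(\U)}^+)^{\psi_{\mathrm{st}}=0}$ is free of rank one over $\mathcal{D}(\Gamma,\co(\U))$, but that result is classically stated over a $p$-adic field, and you admit that ``the principal difficulty is making the distribution-algebra argument rigorous in the relative rigid-analytic setting.'' That difficulty is precisely the content of the lemma, and acknowledging it is not the same as closing it. The paper closes it by citing Chenevier's relative version, \cite[Proposition 2.14]{Che}, which says that if $D^+$ is a free $\calR^+_{\co(\U)}$-module of rank one that is $\Gamma$-bounded in the sense of \cite[Section 1.8]{Che}, then $(D^+)^{\psi=0}$ is free of rank one over $\calR^+_{\co(\U)}(\Gamma)$ with basis $S_1(1+T)w$. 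Applying this requires verifying $\Gamma$-boundedness, which the paper does by exhibiting the explicit integral model
\[
\mathcal{A} = \zp\langle S_1, S_2, T_1, T_2, T_3\rangle/(p^rT_1 - S_1,\ 1 - T_1T_2,\ pT_3 - S_2)
\]
of $\co(\U)$ and observing $\mathrm{Mat}(\gamma') = [\delta_{\U}(\chi(\gamma'))] \in M_1(\mathcal{A}[[T]])$. This verification is absent from your argument and is the actual nontrivial input.

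Two smaller points. First, your step (i) is fine but is subsumed once one has the Chenevier freeness statement, so it contributes nothing to the argument (it only shows $S_1(1+T)$ is in the module, not that it generates). Second, your closing worry about decomposing by $\mu_{p-1}$-isotypic components and about the normalization $\delta_{\U}(\zetap) = \zetap^{r+1}$ is a red herring: the paper fixes $\gamma$ to be a topological generator of all of $\Gamma$ (with $\chi(\gamma) = \zetap^a(1+p)$ and $\gcd(a,p-1)=1$), and once $\gamma$ topologically generates $\Gamma$, the quotient $\calR^+_{\co(\U)}(\Gamma)/(\gamma-1) \cong \co(\U)$ holds without any isotypic analysis. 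Your step (iii), invoking a pointwise specialization to a rank-one $(\varphi,\Gamma)$-module over a field, is also unnecessary once freeness over $\calR^+_{\co(\U)}(\Gamma)$ is known, and in any case does not by itself establish freeness over $\co(\U)$ (a single fiber check does not rule out torsion).
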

\begin{proof}
In order to prove this, we apply \cite[Proposition $2.14$]{Che}. We know that $D^+$ is a free $\mathcal{R}_{\mathcal{O}(\U)}^+$-module of rank one, say with basis $w$. To check that $D^+$ is $\Gamma$-bounded in the sense of \cite[Section 1.8]{Che},
we choose the following model of $\mathcal{O}(\U)$
\[
     \mathcal{A} = \zp \langle S_1, S_2, T_1, T_2, T_3 \rangle/(p^rT_1 - S_1, 1 - T_1T_2, pT_3 - S_2).
\]
%Let $w$ be a generator of $D^+$ with the actions of $\varphi$ and $\Gamma$ on $D^+$ given by 
%\[
%\begin{array}{rcl}
%\varphi(f(T)w) & = & \varphi(f(T))\delta_{\U}(p)w, \\ \gamma(f(T)w) & = & \gamma(f(T))\delta_{\U}(\chi(\gamma))w.
%\end{array}
%\]
Then for $\gamma' \in \Gamma$ we have
%write $\chi(\gamma') = \zetap^{n_1}(1+p)^{n_2}$ for some $n_1 = 0, 1, \cdots, p-2$ and $n_2 \in \zp$. Therefore 
$\mathrm{Mat}(\gamma') = [\delta_{\U} (\chi(\gamma'))] % [\zetap^{(r+1)n_1}(1+S_2)^{n_2}] 
\in M_1(\mathcal{A}[[T]])$, so $D^+$ is $\Gamma$-bounded. Using \cite[Proposition $2.14$]{Che}, we see that $\{S_1(1+T)w\}$ is a basis of $(D^+)^{\psi = 0}$ over $\mathcal{R}_{\mathcal{O}(\U)}^+(\Gamma)$ (see \cite[Section 2.12]{Che} for the definition of the last ring). Now $\calR_{\co(\U)}^+(\Gamma)/(\gamma - 1) \cong \co(\U)$. Therefore $(D^+)^{\psi = 0}/(\gamma - 1)$ is a free $\co(\U)$-module of rank one generated by $S_1(1+T)w$.
\end{proof}

For the remainder of section, we drop $w$ when we talk about the $\co(\U)$-generator $S_1(1 + T)w$ of $(D^+)^{\psi = 0}/(\gamma - 1)$.
Using these lemmas, we see that the quotient $C(D^+)/(\gamma - 1)$ is generated by $S_1(1+T)$ as a free $\mathcal{O}(\U)$-module. Pushing this generator to $C(D)/(\gamma - 1)$ under the first isomorphism $C(D^+)/(\gamma - 1) \to C(D)/(\gamma - 1)$, we see that $S_1(1+T)$ is an $\co(\U)$-generator of $C(D)/(\gamma - 1)$. Now we want a generator of $D^{\psi = 1}/(\gamma - 1)$. 
%The following lemma explicitly determines the preimage of $S_1(1 + T)$ under the second isomorphism $D^{\psi = 1}/(\gamma - 1) \to C(D)/(\gamma - 1)$.

\begin{lemma}\label{y and phi - 1}
    The element $y = S_1\sum_{n = 0}^{\infty}S_1^n(1 + T)^{p^n}$ is the preimage of $S_1(1 + T)$ under the isomorphism $1 - \varphi : D^{\psi = 1}/(\gamma - 1) \to C(D)/(\gamma - 1)$. % In particular, $(1 - S_1\varphi)y = S_1(1 + T)$.
\end{lemma}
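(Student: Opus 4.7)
The plan is to verify the two properties that characterize the preimage: that $y$ lies in $D^{\psi=1}$, and that $(1-\varphi)(y) = S_1(1+T)$ as an identity in $D$, which in particular gives the required image in $C(D)/(\gamma-1)$. Both reductions are short once the twist on $\varphi$ and $\psi$ is unwound.

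First, using the twisted action $\varphi(f(T)) = S_1 \cdot f((1+T)^p - 1)$, substitution into the defining series sends each $(1+T)^{p^n}$ to $(1+T)^{p^{n+1}}$, so
\[
\varphi(y) \;=\; S_1\sum_{n=0}^{\infty} S_1^{n+1}(1+T)^{p^{n+1}} \;=\; y - S_1(1+T),
\]
giving $(1-\varphi)(y) = S_1(1+T)$. The defining series converges in $\RU^+$: since $T_1 T_2 = 1$ forces $\|T_1\|=1$ in $\co(\U)$, one has $\|S_1\| = \|p^r T_1\| = p^{-r} < 1$ for $r \geq 1$, while $|(1+T)^{p^n}|_\rho \leq 1$ for every $\rho < 1$, so the terms tend to zero uniformly on each closed disk in the open unit disk. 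Hence $y \in \RU^+ \subset \RU$.

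Next, to show $\psi(y) = y$, I will apply $\psi = S_1^{-1}\psi_0$ (where $\psi_0$ denotes the standard left inverse of the untwisted Frobenius $\varphi_0 : f(T) \mapsto f((1+T)^p - 1)$) to the identity $y = \varphi(y) + S_1(1+T)$ established above. Since $\psi \circ \varphi = \mathrm{id}$ by construction, the claim reduces to
\[
\psi(S_1(1+T)) \;=\; S_1^{-1}\psi_0(S_1(1+T)) \;=\; \psi_0(1+T) \;=\; 0,
\]
where the last equality is the classical identity $\psi_0((1+T)^a) = 0$ for $p \nmid a$, a consequence of $\sum_{\zeta^p=1}\zeta = 0$ together with injectivity of $\varphi_0$.

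Combining the two steps, $y \in D^{\psi=1}$ and $1-\varphi$ sends $y$ to $S_1(1+T)$ in $D$, so the class of $y$ in $D^{\psi=1}/(\gamma-1)$ maps to the class of $S_1(1+T)$ in $C(D)/(\gamma-1)$ under the isomorphism recalled just before the lemma; bijectivity of that map then identifies $y$ as the unique such preimage. No real obstacle is anticipated: the entire argument is a telescoping computation, and the only delicate point is keeping track of the $S_1$-factors in the twisted $\varphi$ and $\psi$, which are arranged precisely so that the residual term after cancellation reproduces the previously identified $\co(\U)$-generator $S_1(1+T)$.
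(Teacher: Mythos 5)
Your proof is correct and follows essentially the same route as the paper: establish $y = \varphi(y) + S_1(1+T)$ by a telescoping computation, conclude $(1-\varphi)y = S_1(1+T)$, and observe that $\psi(y) = y$ because the correction term vanishes under $\psi$. The paper expresses that correction term as $(1+T)\varphi(1)$ to make the vanishing under the twisted $\psi$ immediate via the identity $\psi(\varphi(g)\cdot(1+T))=g\,\psi_0(1+T)=0$ from Chenevier, whereas you unwind the twist explicitly and invoke $\psi_0(1+T)=0$ directly — same content, slightly different bookkeeping. You also supply a convergence check for the series defining $y$ (using $\|S_1\|=p^{-r}<1$), which the paper leaves implicit; this is a genuine, if minor, addition.
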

\begin{proof}
Note  $y \in D^{\psi = 1}$ (for the twisted $\psi$). To see this, write (for the twisted $\varphi$) 
\[
    \begin{array}{rcl}
    S_1\sum_{n = 0}^{\infty}S_1^n(1+T)^{p^n} & = &  \varphi(S_1\sum_{n = 0}^{\infty}S_1^n(1+T)^{p^n}) + (1+T) \varphi(1).
    \end{array}
\]
%\[
%    \begin{array}{rcl}
%    S_1\sum_{n = 0}^{\infty}S_1^n(1+T)^{p^n} & = &  S_1\varphi(S_1\sum_{n = 0}^{\infty}S_1^n(1+T)^{p^n}) + (1+T)S_1\varphi(1).
%    \end{array}
%\]
From the definition of $\psi$ (cf. \cite[Section 2.1]{Che}), we see that $y \in D^{\psi = 1}$. Clearly $(1 - \varphi)y = S_1(1+T)$.  % $(1 - S_1\varphi)y = S_1(1+T)$. Therefore $y$ is a lift of $S_1(1+T)$ under the isomorphism $D^{\psi = 1}/(\gamma - 1) \to C(D)/(\gamma - 1)$.
\end{proof}
Finally, consider the third isomorphism $D^{\psi = 1}/(\gamma - 1) \to H^1_{(\psi, \Gamma)}(D)$ induced by sending $f(T)$ to $(0, f(T))$. Using this isomorphism, we see that the class of $(0, y)$ is a generator of $H^1_{(\psi, \Gamma)}(D)$ as a free $\mathcal{O}(\U)$-module. We repeat that this cohomology group is as a $(\psi, \Gamma)$-modules. In order to get a generator of the $(\varphi, \Gamma)$-cohomology group, we note that $D$ is a rank $1$ module so it is tautologically trianguline, so $\gamma - 1$ is bijective on $D^{\psi = 0}$ by \cite[Corollary 2.5]{Che}, so
\[
\begin{tikzcd}[row sep = 0cm, column sep = 1.5cm]
    H^1_{(\varphi, \Gamma)}(D) \ar[r, "\text{($-\psi$, id)}"] & H^1_{(\psi, \Gamma)}(D)
\end{tikzcd}
\]
is an isomorphism of cohomology groups, by the discussion at the beginning of this section. 
%{\color{green}(cf. \cite[Section 2.1]{Che})} {\color{blue}where the subscripts distinguish the $(\varphi, \Gamma)$ version and the $(\psi, \Gamma)$ version of the Fontaine-Herr cohomology groups of $D$}.
Since $y \in D^{\psi = 1}$, we have $(\varphi - 1)y \in D^{\psi = 0}$.  %$(\delta_{\U}(p)\varphi - 1)y \in D^{\psi = 0}$. 
Since $\gamma - 1$ is bijective on $D^{\psi = 0}$, there exists a unique  $x \in D^{\psi = 0}$ such that 
$(\gamma - 1)x = (\varphi - 1)y$. %$(\delta_{\U}(\chi(\gamma))\gamma - 1)x = (\delta_{\U}(p)\varphi - 1)y$. 
Therefore $(x, y)$ represents a class in $H^1_{(\varphi, \Gamma)}(D)$. Moreover, the class of $(x, y)$ maps to the class of $(0, y)$ under the isomorphism above. Therefore, the class of $(x, y)$ generates $H^1_{(\varphi, \Gamma)}(D)$ as a free $\mathcal{O}(\U)$-module.

For the rest of this paper, every cohomology group that we write will be the $(\varphi, \Gamma)$-version, and we drop the subscript `${(\varphi, \Gamma)}$' from the notation.

We conclude this section by describing an $\co(\U)$-generator of $H^1(m\RU(\delta_{\U}))$.
%
%\begin{theorem}\label{gen of H1mD}
 %   There exists $d \in \RU(\delta_{\U})$ such that the class in $H^1(m\RU(\delta_{\U}))$ represented by $(x - (\delta_{\U}(p)\varphi - 1)d, \, y - (\delta_{\U}(\chi(\gamma))\gamma - 1)d) \in \RU(\delta_{\U})\oplus \RU(\delta_{\U})$ is an $\co(\U)$-generator of $H^1(m\RU(\delta_{\U}))$.
%\end{theorem}

%\begin{proof}
Using the isomorphism $H^1(mD) \to H^1(D)$, we see that %every cohomology class belonging to $H^1(D)$ is also represented by elements of $mD$.
the class of $(x, y)$ is represented by elements of $mD$: there exists a $d \in D$ such that
$x - (\varphi - 1)d, \, y - (\gamma - 1)d \in mD$.
% such that the class of $(x - (\varphi - 1)d, \, y - (\gamma - 1)d)$  is an $\mathcal{O}(\U)$-generator of $H^1(mD)$.
% \end{proof}

\begin{definition}
  \label{gen of H1mD}
  We denote by $e$ the class of  $(x - (\varphi - 1)d, \, y - (\gamma - 1)d)$ in $H^1(mD)$.
\end{definition}
Thus $e$ is an explicit generator of the free $\mathcal{O}(\U)$-module $H^1(m\RU(\delta_{\U}))$ of rank $1$.

\section{The $\sL$-invariant of the limit point}

Recall that the fiber in the blow-up $\tU$ of $\U$ over the exceptional  point $(p^r, (1 + p)^{r + 1} - 1)$ corresponding to the maximal ideal $m = (f_1,f_2)$
is parameterized by $\mathbb{P}^1(\brqp)$. The main theorem in this section gives us a formula for the $\sL$-invariant (in the sense
of Definition~\ref{def of L inv}) of the $(\varphi, \Gamma)$-module
associated to a general $E$-valued point $(p^r, (1 + p)^{r + 1} - 1, a : b)$ in the exceptional fiber. % in terms of $a$ and $b$.

Fix such a point $(p^r, (1 + p)^{r + 1} - 1 , \> a : b)$. Let $v \in \mathrm{Hom}(m/m^2 \otimes_{\qp}E, E)$ be a tangent vector representing the tangent direction associated with this
point
% $(p^r, (1 + p)^{r + 1} - 1 , \> a : b)$
by Proposition~\ref{fiber and tangents}. Recall that $D = \RU(\delta_{\U})$.  Consider the specialization 
map $v^* : E \otimes_{\qp}  mD \to 
\calR_E(x^r\chi)$ induced by the following composition of maps
\[
         %mD \otimes_{\co(\U)} E \to D \otimes_{\co(\U)} m/m^2 \otimes_{\co(\U)} E \to D \otimes_{\qp} E = D/mD \otimes_{\qp} E
         \qquad  mD  \to m \otimes D \to m/m^2 \otimes_{\qp} D/mD \xrightarrow{v} E \otimes_{\qp} \calR_{\qp}(x^r\chi) = \R_{E}(x^r\chi) 
\]
given by
\[
        fg \mapsto f \otimes g \mapsto\br{f} \otimes \br{g}  \mapsto v(\br{f})\br{g}, \qquad
\]
for all $f \in m$, $g \in D$, where the first map is the inverse of the multiplication map $m \otimes D \to mD$
(which is an isomorphism since $\RU$ is flat over $\co(\U)$ by \cite[Lemma 1.3 (v)]{Che}),
and the second map is the map obtained by going mod $m$ on each factor. In particular, we have  
\[
  v^*(f_1g_1 + f_2g_2) % \mapsto \br{f_1} \otimes \br{g_1} + \br{f_2} \otimes \br{g_2}
  = v(\br{f_1})\br{g_1} + v(\br{f_2})\br{g_2}, \qquad
\]
where $g_1, g_2 \in D$ are arbitrary, $\br{g_1}, \br{g_2}$ are their images in $D/mD$,  and $\br{f_1}, \br{f_2}$ are the images of $f_1 = S_1 - p^r$ and $f_2 = 1 + S_2 - (1 + p)^{r + 1}$ in $m/m^2$, respectively.
%{\color{blue} The first map is the composition of the inverse of the multiplication map $m \otimes D \to mD$ (which is an isomorphism since $\RU$ is flat over $\co(\U)$ by \cite[Lemma 1.3 (v)]{Che}) and the canonical map $m \otimes D \to m/m^2 \otimes D/mD$.}

The  map $v^*$ %of $(\varphi, \Gamma)$-modules
yields a specialization map $H^1(v^*) : E \otimes_{\qp} H^1(mD) \to H^1(\R_{E}(x^r\chi))$ induced by
    \[
        H^1(v^*)(\eta_1, \eta_2) = (v^*(\eta_1), v^*(\eta_2)),
    \]
for $(\eta_1, \eta_2) \in mD \oplus mD$ representing a cohomology class in $H^1(mD)$. 
Recall that $e$ is represented by $$(x - (\delta_{\U}(p)\varphi - 1)d, \, y - (\delta_{\U}(\chi(\gamma))\gamma - 1)d) \in mD\oplus mD$$
and is an $\co(\U)$-generator of $H^1(mD)$, where now $\varphi$ and $\gamma$ are the usual untwisted operators (see Definition~\ref{gen of H1mD}).
Here we have rewritten $e$ in terms of these untwisted operators since this explicit
formula for $e$ will be needed below. We make the following definition.

\begin{definition}
  \label{the phi,Gamma module}
  The $(\varphi, \Gamma)$-module associated\footnote{There is a more conceptual definition of this $(\varphi, \Gamma)$-module in terms of
    a cover $U_i$ of $\tU$ as the module $(D_i)_z$, with notation as in the proof of \cite[Proposition 3.9]{Che}, but this definition coincides with the more computational
    one given here by \cite[Theorem 2.33]{Che} (see the remarks at the end of that proof). } to the point $(p^r, (1 + p)^{r + 1} - 1, \> a : b)$ is  the image of
  $e$ in $H^1(\R_{E}(x^r\chi))$ under $H^1(v^*)$.
\end{definition}

\noindent Note that the generator $e$ and the tangent vector $v$ are only well defined up to scalars, but the isomorphism class of this
$(\varphi, \Gamma)$-module is unchanged under multiplication by scalars.  % in $H^1(\R_{E}(x^r\chi))$.
% is a non-zero scalar multiple of the cohomology class associated with the point $(p^r, (1 + p)^{r + 1} - 1, a : b)$.

The theorem below gives us a formula for the $\sL$-invariant of the $(\varphi, \Gamma)$-module
associated to $(p^r, (1 + p)^{r + 1} - 1, a : b)$ in the exceptional fiber in terms of $a$ and $b$.
Note that the $\sL$-invariant only depends on the projective image of the cohomology class $H^1(v^*)(e)$.
%the $\sL$-invariant associated to the  $(p^r, (1 + p)^{r + 1} - 1, a : b)$  is equal to the 
%$\sL$-invariant of the $(\varphi, \Gamma)$-module represented by the class $H^1(v^*)(e)$.

%Recall that $D = \RU(\delta_{\U})$. If $v \in \mathrm{Hom}(m/m^2 \otimes_{\qp}E, E))$ represents an $E$-valued point in the exceptional fiber, then Chenevier proves that composing the maps
%    \[
%        mD \otimes_{\co(\U)} E \to D \otimes_{\co(\U)} m/m^2 \otimes_{\co(\U)} E \xrightarrow{v} D \otimes_{\co(\U)} E
%    \]
%gives us an isomorphism between $mD \otimes_{\co(\U)}E$ and the $(\varphi, \Gamma)$-module associated to the point in the exceptional fiber that $v$ represents.

%Applying this general theorem to the point $(p^r, (1 + p)^{k - 1} - 1, \sL p^r(p - 1) : (1 + p)^{k - 1}(p - 1)\mathrm{log}(1 + p)) \in \tU(E)$, we see that the $\sL$-invariant associated to the above point is $\sL$.

%Let 
%\[
    %\mu : H^1(m\RU(\delta_{\U})) \to \mathrm{Hom}(\mathrm{Hom}(m/m^2 \otimes_{\qp}E, E)), H^1(\calR_{E}(x^r\chi)))
%\]
%be the map defined by sending a $(\varphi, \Gamma)$-module $D$ to the homomorphism that sends a vector
%constructed by Chenevier in the proof of \cite[Theorem $2.33$]{Che}. In the same theorem, Chenevier proves that the image of $\mu$ is a one-dimensional vector space over $\qp$ consisting of isomorphisms between $\mathrm{Hom}(m/m^2, \qp)$ and $H^1(\calR_{\qp}(x^r\chi))$. Therefore the image of $\mu$ induces a canonical isomorphism $\mathbb{P}(\mathrm{Hom}(m/m^2, \qp)) \to \mathbb{P}(H^1(\calR_{\qp}(x^r\chi)))$. 

\begin{theorem}\label{The L invariant}
     The $\sL$-invariant of the $(\varphi, \Gamma)$-module associated to the  $E$-valued point in the exceptional fiber with coordinates $(p^r, (1 + p)^{r + 1} - 1,\> a : b)$ is
    \[
        \sL = -\frac{(1+p)^{r+1}\log(1+p)}{p^r} \cdot \frac{a}{b} \in \mathbb{P}^1(E).
    \]
\end{theorem}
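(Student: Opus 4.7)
The plan is to push the point $(p^r, (1+p)^{r+1}-1,\ a:b)$ through three successive bijections: first to a tangent direction via Proposition~\ref{fiber and tangents}, then to a cohomology class in $H^1(\calR_E(x^r\chi))$ via the isomorphism of \cite[Theorem 2.33]{Che}, and finally to an $\sL$-invariant via Definition~\ref{def of L inv}. First I would apply Proposition~\ref{fiber and tangents} to identify the given point with the class of the linear functional $v\in\mathrm{Hom}_E(m/m^2\otimes_{\qp}E,\,E)$ sending $\bar f_1\otimes 1\mapsto a$ and $\bar f_2\otimes 1\mapsto b$.

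Next, to realize the isomorphism $\mathbb{P}(\mathrm{Hom}(m/m^2\otimes E,E)) \cong \mathbb{P}(H^1(\calR_E(x^r\chi)))$ concretely, I would use the generator $e\in H^1(m\RU(\delta_{\U}))$ of Definition~\ref{gen of H1mD}, represented by the pair $(x-(\varphi-1)d,\ y-(\gamma-1)d)$ with both entries in $m\RU(\delta_{\U})$. Reducing modulo $m^2\RU(\delta_{\U})$ and invoking the natural identification $m\RU(\delta_{\U})/m^2\RU(\delta_{\U}) \cong (m/m^2)\otimes_{\qp}\calR_{\qp}(x^r\chi)$, the pair becomes an element of $((m/m^2)\otimes\calR_{\qp}(x^r\chi))^{\oplus 2}$ to which $v\otimes\mathrm{id}$ applies componentwise to produce a representative $(A,B)\in \calR_E(x^r\chi)^{\oplus 2}$ of the target cohomology class.

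I would then read off the Benois coordinates of this class via formula~(\ref{eq:10}) by computing the residues $\lambda'=\mathrm{res}(At^r\,dt)$ and $\mu'=\mathrm{res}(Bt^r\,dt)$. The explicit form $y=S_1\sum_{n\geq 0}S_1^n(1+T)^{p^n}$ from Lemma~\ref{y and phi - 1}, combined with the substitutions $S_1=p^r+f_1$ and $1+S_2=(1+p)^{r+1}+f_2$, allows a controlled expansion of $y$ modulo $m^2$; the choice of $d$ produces a lift where the entries are effectively in $m\RU(\delta_{\U})$, so that passage modulo $m^2$ gives the expression to which $v$ is applied. For the first component, the cocycle relation $(\gamma-1)x=(\varphi-1)y$ in $\RU(\delta_{\U})$ descends to $(\gamma-1)A = (\varphi-1)B$ in $\calR_E(x^r\chi)$, which lets $A$ be determined from $B$ (up to coboundaries, which do not affect the residue). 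Once $\lambda'/\mu'$ is computed, Definition~\ref{def of L inv} gives $\sL = -\log(\chi(\gamma))\cdot\lambda'/\mu' = -\log(1+p)\cdot\lambda'/\mu'$, since $\chi(\gamma) = \zetap^a(1+p)$ and $\log\zetap=0$; comparison with the target formula reduces the theorem to establishing $\lambda'/\mu' = (1+p)^{r+1}a/(p^r b)$.

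The main obstacle will be isolating the $f_2$-contribution. Since $y$ itself has no direct dependence on $f_2$, the $b$-coefficient enters only through the $f_2$-deformation of the twisted $\gamma$-action on $T$: modulo $m^2$ one has $\gamma T \equiv \gamma_0 T\cdot(1+f_2/(1+p)^{r+1})$, where $\gamma_0$ is the action at the exceptional point, together with the corresponding adjustment inside $(\gamma-1)d$. Tracking how this deformation propagates through the residue pairing---and identifying exactly where the factor $(1+p)^{r+1}/p^r$ is produced---is the delicate step. I would model these calculations on the four residue computations carried out in the proof of Proposition~\ref{change of basis}, which already provide templates for $\mathrm{res}(\,\cdot\,t^r dt)$ applied to expressions involving $(\varphi-1)$ and $(\gamma-1)$ of power-series generators, and then invoke the flexibility of $d$ (and the formulas~(\ref{eq:11}) and~(\ref{eq:12})) to discard coboundary terms and extract the coefficient matching the claimed formula.
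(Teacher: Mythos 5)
Your framework matches the paper's exactly — the same three bijections (Proposition~\ref{fiber and tangents}, Chenevier's isomorphism realized via the generator $e$ of Definition~\ref{gen of H1mD}, and Definition~\ref{def of L inv}), the same residue formula from Benois, and the same reduction to computing $\lambda'/\mu'$. But your plan for the residue computation, which is the heart of the proof, has a gap and points in a harder direction than necessary.

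The paper does not reduce $y$ modulo $m^2$ or track an $f_2$-deformation of the twisted $\gamma$-action. Instead it first observes that the residue map $\mathrm{Res}(t^r\,\cdot\,)$ on $mD\otimes E$ and on $\calR_E(x^r\chi)$ commutes with the specialization $v^*$, so one may compute residues in $\RU$ \emph{before} applying $v$. Then the cocycle relation $(\delta_{\U}(\chi(\gamma))\gamma - 1)x = (\delta_{\U}(p)\varphi - 1)y = -S_1(1 + T)$, together with $\mathrm{Res}(t^rS_1(1+T))=0$ and the twisted residue formulas \eqref{eq:13}, \eqref{eq:14}, forces $\mathrm{Res}(t^r x) = \mathrm{Res}(t^r y) = 0$ (here one must also use that the resulting coefficients $\delta_{\U}(\chi(\gamma))\chi(\gamma)^{-(r+1)}-1$ and $\delta_{\U}(p)p^{-r}-1$ are non-zero-divisors in $\co(\U)$, which is where bien plac\'e enters). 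Finally, the correction terms contribute $\mathrm{Res}(t^r(\delta_{\U}(p)\varphi - 1)d) = (f_1/p^r)\,\mathrm{Res}(t^r d)$ and $\mathrm{Res}(t^r(\delta_{\U}(\chi(\gamma))\gamma - 1)d) = (f_2/(1+p)^{r+1})\,\mathrm{Res}(t^r d)$, so
$\lambda_v = -p^{-r}\,\br{\mathrm{Res}(t^r d)}\cdot a$ and $\mu_v = -(1+p)^{-(r+1)}\,\br{\mathrm{Res}(t^r d)}\cdot b$, and the unknown factor $\br{\mathrm{Res}(t^r d)}$ cancels in $\lambda_v/\mu_v$ (it is nonzero since $H^1(v^*)(e)\neq 0$). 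This is the crucial trick you do not identify: you never need to know $d$ explicitly.

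Your plan to ``invoke the flexibility of $d$ to discard coboundary terms'' is the specific point that would lead you astray. The terms $v^*((\delta_{\U}(p)\varphi-1)d)$ and $v^*((\delta_{\U}(\chi(\gamma))\gamma-1)d)$ are \emph{not} coboundaries in $\calR_E(x^r\chi)$, because $v^*$ does not intertwine the $\delta_{\U}$-twist on $\RU$ with the $x^r\chi$-twist on $\calR_E(x^r\chi)$ — the mismatch $\delta_{\U}(p)p^{-r}-1 = f_1/p^r$ and $\delta_{\U}(\chi(\gamma))\chi(\gamma)^{-(r+1)}-1 = f_2/(1+p)^{r+1}$ is precisely where the answer comes from. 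Discarding them would give $\lambda_v = \mu_v = 0$, a contradiction. With this observation and the identification $\chi(\gamma) = \zetap^a(1+p)$, $\log(\zetap)=0$, the claimed formula follows immediately; without it, you are stuck on an expansion of $d$ that is not explicitly available.
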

\begin{proof}
  We compute the $\sL$-invariant of the $(\varphi, \Gamma)$-module given by  $H^1(v^*)(e)$ using Definition~\ref{def of L inv}.
  For convenience of notation, let $\mathrm{Res}(f(T)) = \mathrm{res}(f(T)dt)$ for $f(T) \in \calR_{E}, \, \RU$. 
%Chenevier's method yields that 
%\[\mu(e)(v) = H^1(v^*)(e) = (v^*(x - (\delta_{\U}(p)\varphi - 1)d), \, v^*(y - (\delta_{\U}(\chi(\gamma))\gamma - 1)d)).\]
Write
\[
    H^1(v^*)(e) = \lambda_v \cdot \br{\alpha}_{r+1} + \mu_v \cdot \br{\beta}_{r+1},
\]
where, by \eqref{eq:10},
\[\arraycolsep=2pt
  \begin{array}{r}
    \lambda_v  =  \mathrm{Res}(t^rv^*(x - (\delta_{\U}(p)\varphi - 1)d )), \quad
    \mu_v  =  \mathrm{Res}(t^rv^*(y - (\delta_{\U}(\chi(\gamma))\gamma - 1)d )).
    \end{array}
\]

To compute these residues, we first note that the following square commutes
\[
    \begin{tikzcd}
      mD\otimes_{\qp}E \ar[r, "v^*"] \ar[d, "\mathrm{Res}(t^r\_)"] & %  D/mD\otimes_{\qp}E
                                                                          \calR_{E}(x^r\chi) \ar[d, "\mathrm{Res}({t^r} \_)"]\\
        m \otimes_{\qp} E \ar[r, "v"] & E,
    \end{tikzcd}
  \]
where we write $v$ again for the map $m \otimes E \xrightarrow{\br{\>\>}} m/m^2 \otimes E \xrightarrow{v} E$.
Indeed, given any element $f_1g_1 + f_2g_2 \in mD$, we have
\[\arraycolsep = 2pt
    \begin{array}{rcl}
      \mathrm{Res}(t^r v^*(f_1g_1 + f_2g_2))
      & = & \mathrm{Res}(t^r (v(\br{f_1}) \br{g_1} + v(\br{f_2}) \br{g_2})) \\
      & = & v(\br{f_1}) \mathrm{Res}(t^r\br{g_1}) + v(\br{f_2})\mathrm{Res}(t^r\br{g_2}) \\
 %    & = &  v(\br{f_1}) \br{ \mathrm{Res}(t^r g_1)} + v(\br{f_2}) \br{ \mathrm{Res}(t^rg_2)} \\
      & = &   v(\mathrm{Res}(t^r g_1) f_1 +  \mathrm{Res}(t^rg_2)   f_2)    \\
      & = &   v(\mathrm{Res}(t^r(f_1g_1 + f_2g_2))). 
    \end{array}
\]
%% where $\br{f_1}, \br{f_2} \in m/m^2$ and $\br{g_1}, \br{g_2} \in D/mD = \R_{\qp}(x^r\chi)$.
%Going the other way, we have
%\[\arraycolsep=2pt
%    \begin{array}{rcl}
%    v(\br{\mathrm{Res}(t^r(g_1f_1 + g_2f_2)}) & = & v(\mathrm{Res}(t^rg_1f_1 \otimes 1) + \mathrm{Res}(t^rg_2f_2 \otimes 1)) \\
%    %& = & v(f_1\mathrm{Res}(t^rg_1) \otimes 1 + f_2\mathrm{Res}(t^rg_2) \otimes 1) \\
%    & = & v(\br{\mathrm{Res}(t^rg_1)} \hspace{2pt} \br{f_1} \otimes 1) + v(\br{\mathrm{Res}(t^rg_2)}\hspace{2pt}\br{f_2} \otimes 1),
%    \end{array}
%\]
%where $\br{f_1}, \br{f_2} \in m/m^2$ and $\br{\mathrm{Res}(t^rg_1)}, \br{\mathrm{Res}(t^rg_2)} \in \mathcal{O}(\U)/m = \qp$. By the $E$-linearity of $v$, 
%we can bring the residues out of $v$ to get
%\[\arraycolsep=2pt
%    \begin{array}{rcl}
%    v(\mathrm{Res}(t^r(f_1g_1 + f_2g_2) \otimes 1)) & = & \br{\mathrm{Res}(t^rg_1)}v(\br{f_1} \otimes 1) + \br{\mathrm{Res}(t^rg_2)}v(\br{f_2} \otimes 1) \\
%    & = & v(\br{f_1} \otimes 1)\mathrm{Res}(\br{t^rg_1}) + v(\br{f_2} \otimes 1)\mathrm{Res}(\br{t^rg_2})
%    \end{array}
%\]
%proving the commutativity.

Now we prove that $\mathrm{Res}(t^r x) = 0$ and $\mathrm{Res}(t^r y) = 0$. 
We use the following formulas, which are similar to (\ref{eq:11}) and (\ref{eq:12}). % except for the coefficients in the Robba ring. 
For any $f(T) \in \RU$, we have
\begin{eqnarray}
    \mathrm{Res}(\varphi(f(T))) & = & \mathrm{Res}(f(T)), \label{eq:13} \\
    \mathrm{Res}(\gamma(f(T))) & = & \chi(\gamma)^{-1}\mathrm{Res}(f(T)). \label{eq:14}
\end{eqnarray}
\
Using the definition of $x$ and $y$ (see the discussion before Definition~$\ref{gen of H1mD}$), we get
\[
    (\delta_{\U}(\chi(\gamma))\gamma - 1)x = (\delta_{\U}(p)\varphi - 1)y = -S_1(1 + T),
\]
by Lemma~\ref{y and phi - 1}. Multiplying by $t^r$ and taking residues, by (\ref{eq:6}) and (\ref{eq:5}), we get
\[\arraycolsep=2pt
    \begin{array}{lcl}
    \mathrm{Res}((\delta_{\U}(\chi(\gamma))\chi(\gamma)^{-r}\gamma - 1)t^rx) & = & \mathrm{Res}(-t^rS_1(1 + T)) = 0, \\
    \mathrm{Res}((\delta_{\U}(p)p^{-r}\varphi - 1)t^ry) & = & \mathrm{Res}(-t^rS_1(1 + T)) = 0.
    \end{array}
  \]
Applying \eqref{eq:14} and \eqref{eq:13} to the terms on the left, we get $\mathrm{Res}(t^rx) = 0 = \mathrm{Res}(t^ry)$.

Thus, using the commutativity of the diagram above, we get
\[\arraycolsep=2pt
    \begin{array}{lcl}
    \lambda_v & = & v(\mathrm{Res}(t^r(x - (\delta_{\U}(p)\varphi - 1)d ))) \\
    % & = & v(\mathrm{Res}(t^rx \otimes 1)) - v(\mathrm{Res}(t^r(\delta_{\U}(p)\varphi - 1)d \otimes 1)) \\
    & = & - v(\mathrm{Res}(t^r(\delta_{\U}(p)\varphi - 1)d )) \\
    & \stackrel{(\ref{eq:5})}{=} & - v(\mathrm{Res}((\delta_{\U}(p)p^{-r}\varphi - 1)t^rd)).
    \end{array}
\]
    Using equation (\ref{eq:13}), we get
\[\arraycolsep=2pt
    \begin{array}{lcl}
   \lambda_v & = & - v((\delta_{\U}(p)p^{-r} - 1)\mathrm{Res}(t^rd))\\
    & \stackrel{(\ref{def of taut char})}{=} & - v((S_1 - p^r)p^{-r}\mathrm{Res}(t^rd)) \\
    & = &  - v(f_1 p^{-r}\mathrm{Res}(t^rd)) \\
    & = & - p^{-r}v(f_1\mathrm{Res}(t^rd)).
    \end{array}
\]
The expression in the brackets belongs to $m$. Recall that $v : m \otimes E \to E$ is the composition of the maps ${\>}^-: m \to m/m^2$ and $v : m/m^2 \otimes E \to E$.
We therefore get
\[
    \lambda_v = -p^{-r} \br{\mathrm{Res}(t^rd)} \cdot v(\br{f_1}) =  -p^{-r} \br{\mathrm{Res}(t^rd)} \cdot a.
\]
Similarly,
\[\arraycolsep=2pt
    \begin{array}{rcl}
        \mu_v & = & v(\mathrm{Res}(t^r(y - (\delta_{\U}(\chi(\gamma))\gamma - 1)d))) \\

        % ...& = & v(\mathrm{Res}(t^ry) - \mathrm{Res}(t^r(\delta_{\U}(\chi(\gamma))\gamma - 1)d)) \\
        & = & - v(\mathrm{Res}(t^r(\delta_{\U}(\chi(\gamma))\gamma - 1)d)) \\
        & \stackrel{(\ref{eq:6})}{=} & - v(\mathrm{Res}((\delta_{\U}(\chi(\gamma))\chi(\gamma)^{-r}\gamma - 1)t^rd)).
    \end{array}
\]
    Using equation (\ref{eq:14}), we get
\[
        \mu_v = - v((\delta_{\U}(\chi(\gamma))\chi(\gamma)^{-(r+1)} - 1)\mathrm{Res}(t^rd)).
\]
Since $\chi(\gamma) = \zetap^a (1+p)$, for some $a$, we get
\[\arraycolsep=2pt
        \begin{array}{rcl}
        \mu_v & \stackrel{(\ref{def of taut char})}{=} & - v((\zetap^{a(r+1)}(1 + S_2)\zetap^{-a(r+1)}(1+p)^{-(r+1)} - 1)\mathrm{Res}(t^rd))  \\
%        & = & - v((1 + S_2 - (1+p)^{r+1})(1+p)^{-(r+1)} \mathrm{Res}(t^rd)) \\
        & = & - v(f_2(1+p)^{-(r+1)}\mathrm{Res}(t^rd)) \\
        & = & - (1+p)^{-(r+1)}v(f_2\mathrm{Res}(t^rd)).
        \end{array}
\]
    The expression inside the brackets is an element of $m$. As above, we get
\[ 
        \mu_v = - (1+p)^{-(r+1)}\br{\mathrm{Res}(t^rd)} \cdot v(\br{f_2}) =  - (1+p)^{-(r+1)}\br{\mathrm{Res}(t^rd)} \cdot b.
\]
Both $\lambda_v$ and $\mu_v$ cannot be equal to $0$ simultaneously because $H^1(v^*)(e) \neq 0$.
% $e \not = 0$ in $H^1(m\RU(\delta_{\U}))$ and $\mu(e)$ is an isomorphism, whence $\mu(e)(v) \not = 0$.
So $\br{\mathrm{Res}(t^rd)} \not = 0$. By Definition~\ref{def of L inv} (or directly by \cite[Proposition 2.3.7]{Ben}),
the $\sL$-invariant of  $H^1(v^*)(e)$ is
\[
  -\log(\chi(\gamma)) \cdot \frac{\lambda_v}{\mu_v} = -\frac{(1+p)^{r+1}\log(1+p)}{p^r} \cdot \frac{a}{b}.
  \qedhere
\]
\end{proof}

%We can finally compute the $\sL$-invariant of the $(\varphi, \Gamma)$-module associated to the limit point $(p^r, (1 + p)^{k - 1}, 0 : 1)$. Using Theorem \ref{fiber and tangents}, we see that this point corresponds to the tangent direction $v_0$ given by $v_0(\br{f_1}) = 0$ and $v_0(\br{f_2}) = 1$. Using Theorem \ref{The L invariant}, we see that the $\sL$-invariant associated to this limit point is $0$. This proves Corollary \ref{Main}.

\section{Proof of Theorem~\ref{any l invariant} and generalizations}
\label{Section proof of main theorem}

\subsection{{Proof of Theorem~\ref{any l invariant}}}
\label{proof of main theorem}

  Let $k \geq 3$ and let $r = k - 2$. Let $(k_n, a_n)$ for $n \geq 1$ be as in \eqref{kn an}. These quantities depend on
  $\sL \in {\mathbb P}^1(\br{\mathbb Q}_p)$.
  % First assume that $\sL \in E$.
  We prove that the sequence of crystalline representations $V^*_{k_n,a_n}$ 
  % $V^*_{k + p^n(p - 1), \, p^{r/2} + \sL p^{n + r/2}(p - 1)/2}$
  converges to the semi-stable representation $V^*_{k, \sL}$.
  % For ease of notation, let $(k_n, a_n)$ for $n \geq 1$ be as in \eqref{kn an}.

We recall some facts from Section~\ref{computing L}. Recall that the ordered pair of characters
$(\delta_{1, n}, \delta_{2, n})$ is  associated to the representation $V^*_{k_n, a_n}$
% $V^*_{k + p^n(p - 1), \, p^{r/2} + \sL p^{n + r/2}(p - 1)/2}$
and the sequence of characters $\delta_{1, n}\delta_{2, n}^{-1} = \mu_{y_n^2} \chi^{k_n - 1}$, where % $\chi^{k - 1 + p^n(p - 1)}$, where
$y_n$ is as in \eqref{yn},
%\[
%    y_n = \frac{p^{r/2} + \sL p^{n + r/2}(p - 1)/2 + \sqrt{(p^{r/2} + \sL p^{n + r/2}(p - 1)/2)^2 - 4p^{k - 1 + p^n(p - 1)}}}{2}.
%\]
converges to the exceptional character $x^r\chi$. Moreover, the character $\mu_{y_n^2}\chi^{k_n-1}$ corresponds to the following point of $\tU$
\begin{equation}\label{eq:15}
    (y_n^{2}, (1 + p)^{k_n - 1} - 1, \> y_n^2 - p^r : (1 + p)^{k_n-1} - (1 + p)^{k - 1}).
\end{equation}
We compute the limit of the above points in $\tU$ as $n \to \infty$ using the following lemmas.

\begin{lemma}\label{rate of the third coordinate}
    We have
    \[
      \lim_{n \to \infty}\frac{y_n^2 - p^r}{p^n(p-1)} =
     \begin{cases}
         \sL p^r, & \text{if $\sL \not = \infty$} \\
        2p^r/(p-1), & \text{if $\sL = \infty$}.
      \end{cases}
    \]
\end{lemma}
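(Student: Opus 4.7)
The starting point is the identity
\[
  y_n^2 \;=\; \frac{a_n^2 - 2 p^{k_n - 1} + a_n \sqrt{a_n^2 - 4 p^{k_n - 1}}}{2},
\]
obtained by squaring the defining formula for $y_n$. I would then expand the square root via the $p$-adic binomial series, using that $4 p^{k_n - 1}/a_n^2$ becomes a small perturbation of $0$ as $n \to \infty$.

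In the case $\sL \neq \infty$, I would write $a_n = p^{r/2}(1 + u_n)$ with $u_n = \sL p^n(p - 1)/2$, so that $a_n^2 - 4 p^{k_n - 1} = a_n^2 (1 - \delta_n)$ where $\delta_n = 4 p^{1 + p^n(p-1)}/(1 + u_n)^2$. For $n$ large enough, $v_p(\delta_n) > 0$, and the $p$-adic binomial expansion yields
\[
  a_n \sqrt{a_n^2 - 4 p^{k_n - 1}} \;=\; a_n^2 \sqrt{1 - \delta_n} \;=\; a_n^2 - 2 p^{k_n - 1} + O\!\bigl(p^{2 + 2 p^n(p - 1)}\bigr).
\]
Substituting back and subtracting $p^r$ gives
\[
  y_n^2 - p^r \;=\; p^r(2 u_n + u_n^2) - 2 p^{k_n - 1} + O\!\bigl(p^{2 + 2 p^n(p - 1)}\bigr).
\]
Dividing by $p^n$, the leading contribution $2 p^r u_n / p^n = \sL p^r (p - 1)$ is exactly the claimed limit, while $p^r u_n^2/p^n$, $2 p^{k_n - 1 - n}$, and the remaining error all have $p$-adic valuation tending to $+\infty$.

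The case $\sL = \infty$ is completely analogous, with $u_n = p^n$ and $k_n - 1 = r + 1 + p^{n^2}(p - 1)$. The same expansion now yields
\[
  y_n^2 - p^r \;=\; p^r(2 p^n + p^{2n}) + O\!\bigl(p^{1 + p^{n^2}(p - 1)}\bigr),
\]
so dividing by $p^n$ the limit is $2 p^r$, since $p^n \to 0$ and the error decays much faster (because $p^{n^2}$ grows far faster than $n$).

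The only delicate point is the choice of branch of the square root in the definition of $y_n$: the formula for $y_n$ fixes one choice, which one must check matches the branch $\sqrt{1 - \delta_n} \to 1$ used by the binomial series. This is automatic because both choices make $y_n$ close to $a_n$, a unit multiple of $p^{r/2}$, rather than to the other root $a_n - y_n = p^{k_n - 1}/y_n$, which has much larger valuation and is thus close to $0$.
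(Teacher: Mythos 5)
Your proof is correct and follows essentially the same route as the paper's: both write $a_n^2 - 4p^{k_n-1} = a_n^2(1-\delta_n)$, control $\sqrt{1-\delta_n}$ via the fact that $\delta_n$ has large $p$-adic valuation, and identify the surviving term $2p^r u_n/p^n = \sL p^r(p-1)$ after dividing by $p^n$. The paper keeps the expression $\bigl(\tfrac{1+\sqrt{1-\delta_n}}{2}\bigr)^2$ and bounds $\tfrac{1+\sqrt{1-\delta_n}}{2} \in 1 + p^{1+p^n(p-1)}\co_E$, whereas you first square $y_n$ algebraically and then apply the binomial series to $\sqrt{1-\delta_n}$ directly; these are cosmetic variants of the same estimate. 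Your closing remark on the choice of branch is a good sanity check and is correct: $y_n$ is the root with $v_p(y_n) = v_p(a_n) = r/2$, which matches the branch $\sqrt{1-\delta_n} \to 1$.
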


\begin{proof}
Assume $\sL \neq \infty$. We have
    \[\arraycolsep=2pt
        \def\arraystretch{1.2}
        \begin{array}{rcl}
%            y_n^2 - p^r & = & \left(\frac{p^{r/2} + \sL p^{n + r/2}(p - 1)/2 + \sqrt{(p^{r/2} + \sL p^{n + r/2}(p - 1)/2)^2 - 4p^{k - 1 + p^n(p - 1)}}}{2}\right)^2 - p^r \\
%            & = & \left((p^{r/2} + \sL p^{n + r/2}(p - 1)/2)\left(\frac{1 + \sqrt{1 - 4p^{k - 1 + p^n(p - 1)}(p^{r/2} + \sL p^{n + r/2}(p - 1)/2)^{-2}}}{2}\right)\right)^2 - p^r \\
             y_n^2 - p^r & = & \left(\dfrac{a_n + \sqrt{a_n^2 - 4p^{k_n-1}}}{2}\right)^2 - p^r 
             =   a_n^2 \left(\dfrac{1 + \sqrt{1 - 4p^{k_n-1}a_n^{-2}}}{2}\right)^2 - p^r                   
        \end{array}
    \]
    For large $n$, the valuation of $a_n = p^{r/2} + \sL p^{n + r/2}(p - 1)/2$ is equal to $r/2$. Therefore for large $n$, the expression inside
    the second radical sign is an element of $1 + p^{1 + p^n(p - 1)}\co_{E}$, where $\co_{E}$ is the ring of integers of $E$. Since taking square
    roots is an automorphism of this group, we see that $\sqrt{1 - 4p^{k_n - 1}a_n^{-2}} \in 1 + p^{1 + p^n(p - 1)}\co_{E}$. Therefore we can write
    \[
        \frac{1 + \sqrt{1 - 4p^{k_n - 1}a_n^{-2}}}{2} = 1 + up^{1 + p^n(p - 1)},
    \]
    for some $u \in \co_{E}$. Substituting this in the previous equation, we get
    \[\arraycolsep=2pt
        \def\arraystretch{1.2}
        \begin{array}{rcl}
            y_n^2 - p^r % & = & ((p^{r/2} + \sL p^{n + r/2}(p - 1)/2)(1 + up^{1 + p^n(p - 1)}))^2 - p^r \\
            & = & (p^r + \sL p^{n + r}(p - 1) + \sL^2p^{2n + r}(p - 1)^2/4)(1 + 2up^{1 + p^n(p - 1)} + u^2p^{2 + 2p^n(p - 1)}) - p^r \\
            & = & p^r + \sL p^{n + r}(p - 1) + p^{2n}(u') - p^r,
        \end{array}
    \]
    for some $u'$ whose valuation is bounded below. So
    \begin{equation*}
            \lim_{n \to \infty} \frac{y_n^2 - p^r}{p^n(p-1)} = \sL p^r. %\qedhere
          \end{equation*}
    The limit in the case $\sL = \infty$ is computed similarly.       
  \end{proof}
  
\begin{lemma}\label{rate of the fourth coordinate}
    We have
    \[
      \lim_{n \to \infty} \frac{(1 + p)^{k_n-1} - (1 + p)^{k - 1}}{p^n(p-1)} =
      \begin{cases}
          (1 + p)^{k - 1}\log{(1 + p)}, & \text{if $\sL \not = \infty$} \\
          0, & \text{if $\sL = \infty$}.
      \end{cases}
    \]
\end{lemma}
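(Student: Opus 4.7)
The strategy is to factor $(1+p)^{k-1}$ out of the numerator and reduce to the asymptotic behavior of $(1+p)^{m_n} - 1$, where $m_n := k_n - k$. By \eqref{kn an}, $m_n = p^n(p-1)$ when $\sL \neq \infty$ and $m_n = p^{n^2}(p-1)$ when $\sL = \infty$; in either case $m_n \in p^n\zp$, so it is natural to express $(1+p)^{m_n}$ using the $p$-adic exponential and logarithm. Setting $z_n := m_n \log(1+p)$, one has $(1+p)^{m_n} = \exp(z_n)$, and since $v_p(\log(1+p)) = 1$ we obtain $v_p(z_n) = v_p(m_n) + 1 \geq n+1$, which is comfortably inside the disk of convergence of $\exp$.

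Expanding the exponential gives
\[
    (1+p)^{m_n} - 1 = z_n + \sum_{j \geq 2}\frac{z_n^j}{j!}.
\]
Using the standard bound $v_p(j!) \leq j/(p-1)$, the $j$-th term on the right has valuation at least $j(v_p(m_n)+1) - j/(p-1)$, so the tail ($j \geq 2$) has valuation at least $2v_p(m_n) + 2 - 2/(p-1)$. If $\sL \neq \infty$, then $z_n/p^n = (p-1)\log(1+p)$ exactly, while the tail divided by $p^n$ has valuation at least $n + 2 - 2/(p-1)$, which tends to $\infty$. Hence $((1+p)^{m_n}-1)/p^n \to (p-1)\log(1+p)$, and multiplying by the prefactor $(1+p)^{k-1}$ yields the first case of the lemma. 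If $\sL = \infty$, then $v_p(z_n/p^n) = n^2 - n + 1 \to \infty$ and the tail is even smaller, so the quotient tends to $0$, which gives the second case.

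The argument has no real obstacle: the only technical ingredients are convergence of the $p$-adic exponential on elements of sufficiently positive valuation and the elementary estimate on $v_p(j!)$, both standard. One could alternatively avoid $\exp$ and $\log$ by Newton-expanding $(1+p)^{m_n}$ directly and tracking the valuations of the binomial coefficients $\binom{m_n}{j}$, but the logarithmic approach is cleaner and produces a limit in precisely the form used by Theorem~\ref{The L invariant} when reading off the $\sL$-invariant.
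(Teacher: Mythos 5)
Your proof is correct and follows essentially the same route as the paper's: both factor out $(1+p)^{k-1}$ and reduce to computing $\lim_n\bigl((1+p)^{m_n}-1\bigr)/p^n$. The paper appeals compactly to the product formula for $\log$ (its equation \eqref{eq:4}, in the guise of the standard identity $\lim_{n\to\infty} \bigl((1+x)^{p^n}-1\bigr)/p^n = \log(1+x)$), whereas you unpack the same limit by writing $(1+p)^{m_n}=\exp(m_n\log(1+p))$ and bounding the exponential tail with $v_p(j!)\le j/(p-1)$; this is just a more explicit version of the same computation, and your valuation estimates are all correct.
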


\begin{proof}
    Assume $\sL \neq \infty$. Since
    \[
        (1 + p)^{k_n - 1} - (1 + p)^{k - 1} = (1 + p)^{k - 1}[(1 + p)^{p^n(p - 1)} - 1],
    \]
    we see that
    \begin{eqnarray*}
            \lim_{n \to \infty} \frac{(1 + p)^{k_n - 1 } - (1 + p)^{k - 1}}{p^n} & = & (1 + p)^{k - 1}\lim_{n \to \infty}\frac{(1 + p)^{p^n(p - 1)} - 1}{p^n}  \\
            & = & (1 + p)^{k - 1}\lim_{n \to \infty} \frac{\{1 + [(1 + p)^{(p - 1)} - 1]\}^{p^n} - 1}{p^n}  \\
            & \stackrel{(\ref{eq:4})}{=} & (1 + p)^{k - 1}\log{(1 + p)^{p - 1}}. \qquad \qquad \qquad \qquad \qquad \qquad \quad 
    \end{eqnarray*}
    The limit in the case $\sL = \infty$ is proved similarly.
  \end{proof}
  
Write the sequence (\ref{eq:15}) as follows
\[
    \left(y_n^{2}, (1 + p)^{k_n - 1} - 1, \> \frac{y_n^2 - p^r}{p^n(p-1)} : \frac{(1 + p)^{k_n - 1} - (1 + p)^{k - 1}}{p^n(p-1)}\right).
\]
Using Lemma $\ref{rate of the third coordinate}$ and Lemma $\ref{rate of the fourth coordinate}$, we see that the sequence above converges to the point
\[
  \begin{cases}
    (p^r, (1 + p)^{k - 1} - 1, \> \sL p^r : (1 + p)^{k - 1}\log{(1 + p)}),  & \text{if $\sL \not = \infty$} \\
    (p^r, (1 + p)^{k - 1} - 1, \> 1 : 0), &  \text{if $\sL = \infty$}.
  \end{cases}
\]

Assume $\sL \neq \infty$. By Theorem \ref{The L invariant}, the $\sL$-invariant of the $(\varphi, \Gamma)$-module associated to this limit point
is $-\sL$.  By \cite[Th\'eor\`eme 0.5 (i)]{Col}, this $(\varphi, \Gamma)$-module is also \'etale
  since $(\mu_{p^{r/2}}, \mu_{1/p^{r/2}}\chi^{1 - k},-\sL) \in {\mathscr S}_* \setminus {\mathscr S}_*^{\mathrm{ncl}}$ in the notation of \cite{Col}
  since $r/2 - r/2 = 0$, $r/2>0$ and $r/2 \not > k-1$. The corresponding Galois representation is $V(\mu_{p^{r/2}}, \mu_{1/p^{r/2}}\chi^{1 - k}, -\sL)$
  in the notation of \cite{Col}. Comparing the filtered $(\varphi, N)$-module associated to $V_{k, \sL}^*$ given in the Introduction
  with the one associated to $V(\mu_{p^{r/2}}, \mu_{1/p^{r/2}}\chi^{1 - k}, -\sL)$ using \cite[Section 4.6]{Col} (more specifically,
  \cite[Proposition 4.18]{Col} with $a = 1-k$, $b = 0$, $\alpha = \mu_{p^{k/2}}$, and $\sL$ replaced by $-\sL$), we see that
  $V(\mu_{p^{r/2}}, \mu_{1/p^{r/2}}\chi^{1 - k}, -\sL) \simeq V_{k, \sL}^*$. Thus the sequence of crystalline representations 
  $V^*_{k_n,a_n}$ converges to the semi-stable representation $V^*_{k,\sL}$ for $\sL \in E$. 

  \par Now assume we are in the $\sL = \infty$ case.
  %Let $(\delta_{1, n}, \delta_{2, n})$ be the ordered pair of characters associated with the representation $V_{k, p^{r/2}(1 + p^n)}$. We know that $\delta_{1, n}\delta_{2, n}^{-1} = \mu_{y_n^2}\chi^{k - 1}$, where
%\[
%    y_n = \frac{p^{r/2}(1 + p^n) + \sqrt{(p^{r/2}(1 + p^n))^2 - 4p^{k - 1}}}{2}.
%\]
%As above, for each $n$ the character $\mu_{y_n^2}\chi^{k - 1}$ corresponds to the following point of $\tU$
%\[
%    (y_n^2, (1 + p)^{k - 1} - 1, y_n^2 - p^r : 0) = (y_n^2, (1 + p)^{k - 1} - 1, 1 : 0).
%\]
% This sequence of points clearly converges to $(p^r, (1 + p)^{k - 1}-1, 1 : 0)$.
By Theorem~\ref{The L invariant}, 
the $\sL$-invariant associated to the above limit point is $\infty$. The corresponding Galois representation
is $V(\mu_{p^{r/2}}, \mu_{1/p^{r/2}}\chi^{1 - k}, \infty)$  in the notation of \cite{Col}.
Comparing the filtered $\varphi$-module associated to
$V_{k,\infty}^*$ in the Introduction with the one associated to $V(\mu_{p^{r/2}}, \mu_{1/p^{r/2}}\chi^{1 - k}, \infty)$
in \cite[Section 4.5]{Col} (more precisely, take $a = 1-k$, $b = 0$, $\beta = \mu_{p^{r/2}}$ and $\alpha = \mu_{p^{k/2}}$ in
  the second isomorphism in \cite[Proposition 4.13]{Col}), we see that
  $V(\mu_{p^{r/2}}, \mu_{1/p^{r/2}}\chi^{1 - k}, \infty) \simeq V_{k,\infty}^*$. (Alternatively, the corresponding 
Galois representation is $V(\mu_{p^{r/2}}, \mu_{1/p^{r/2}}\chi^{1 - k})$ in the notation of \cite{Ber} and, by \cite[Proposition 3.1]{Ber}, this last representation
is isomorphic to the crystalline representation $V_{k,a_p}^*$ with $a_p = p^{k/2} + p^{k/2-1}$, which as mentioned in the Introduction, is isomorphic to $V^*_{k, \infty}$.)
  Thus, the sequence of crystalline representations $V^*_{k_n, a_n}$ again converges to the (crystalline) representation $V^*_{k, \infty}$. 

\subsection{$\sL$-invariants as logarithmic derivatives}
  \label{Comparing with Greenberg-Stevens}

  In this subsection, all $\sL$-invariants are finite.   
  % Firstly, we show that the $\sL$-invariant of the limit $V_{k, \sL}^*$ of the sequence $V_{k_n, a_n}^*$ is equal to twice the logarithmic derivative of $a_n$ thought of as a function of $k_n$. This matches with \cite[Theorem 3.18]{GS}, \cite[Theorem B]{Ste}, \cite[Corollary 0.7]{Col10}. There is no negative sign here because the filtration on $D_{k, \sL}$ given in the introduction differs from the one given in \cite{Maz}.

  We prove formula \eqref{log-derivative-intro} from the Introduction showing that $\sL$ is twice the logarithmic derivative of $a_p$. 
More precisely, we prove that if $a_p: \zp \to E$ is a differentiable function of $l$ with $a_p(k) = p^{r/2}$,
for $r = k-2$ and $k \geq 3$, then the crystalline representations $V_{l, a_p(l)}^*$ converge in $\til{\sT}_2$ to the semi-stable representation $V_{k, \sL}^*$ with $$\sL = 2a_p(k)^{-1}a_p'(k)$$ as $l$ tends to $k$ in the $p$-adic topology through integers $l \equiv k \mod (p - 1)$ with $l \neq k$.  The condition $l \equiv k \mod{(p - 1)}$ is necessary because $V_{l, a_p(l)}^*$ converges to $V_{k, \sL}^*$ implies that (the tame part of) $\det V_{l, a_p(l)}^* = \chi^{1 - l}$ converges to (the tame part of)
$\det V_{k, \sL}^* = \chi^{1 - k}$.

This formula is a variant of a classical formula due to \cite[Theorem 3.18]{GS}, \cite[Theorem B]{Ste}, \cite[Theorem 4]{BDI}, \cite[Th\'eor\`eme 0.5, Corollaire 0.7]{Col10}, \cite[Theorem 2]{Ben10} and others (see Remark \ref{Different G-S}).
% because the sign of the $\sL$-invariant in the filtration on $D_{k, \sL}$ given in the Introduction differs from the one in \cite{Maz}.
%The original proof given in \cite[Theorem 3.18]{GS} uses local class field theory.
Our proof of the formula seems new. It uses some elementary $p$-adic analysis (see the two lemmas below) and
Theorem~\ref{The L invariant}, which in turn uses some geometry (the blow-up space $\til{\sT}_2$) and some algebra
(the interpretation of this space in terms of trianguline $(\varphi, \Gamma)$-modules over the Robba ring).

Recall that for integer $l \geq 2$, the $(\varphi, \Gamma)$-module $\Dr(V_{l, a_p(l)}^*)$ is an extension of $\calR_E(\mu_{1/y(l)}\chi^{1 - l})$ by $\calR_E(\mu_{y(l)})$, where
    \[
        y(l) = \frac{a_p(l) + \sqrt{a_p(l)^2 - 4p^{l - 1}}}{2}.
    \]
    Let $\delta_1(l) = \mu_{y(l)}$ and $\delta_2(l) = \mu_{1/y(l)}\chi^{1 - l}$.
    Since $l \equiv k \mod (p-1)$, the characters $\delta_1(l)\delta_2(l)^{-1}$ converge to the exceptional character
    $x^{r}\chi$ as $l \to k$. Therefore the characters $\delta_1(l)\delta_2(l)^{-1}$ eventually belong to $\tU$.
    Moreover, $\delta_1(l)\delta_2(l)^{-1}$ corresponds to the following point of $\tU$
    \begin{equation}\label{points for arbitrary a_p}
        (y(l)^2, (1 + p)^{l - 1} - 1, \>  y(l)^2 - p^r : (1 + p)^{l - 1} - (1 + p)^{k - 1}).
    \end{equation}
    We compute the limit of these points in $\tU$ as $l \to k$ using the following two lemmas.
    
    \begin{lemma}\label{Derivative of y(l)^2}
        We have
        \[
            \lim_{l \to k} \frac{y(l)^2 - p^r}{l - k} = 2a_p(k)a_p'(k).
        \]
    \end{lemma}
    \begin{proof}
      The statement generalizes that of Lemma~\ref{rate of the third coordinate} in the case $\sL \neq \infty$ and
      we give a slightly different proof. 
      Note that
        \begin{eqnarray*}
            y(l)^2 - p^r & = & \frac{2a_p(l)^2 + 2a_p(l)\sqrt{a_p(l)^2 - 4p^{l - 1}} -4p^{l - 1}}{4} - p^r \\
            % & = & \frac{a_p(l)^2 - p^r}{2} + \frac{a_p(l)\sqrt{a_p(l)^2 - 4p^{l - 1}} - p^r}{2} - p^{l - 1} \\
            & = & \frac{a_p(l)^2 - p^r}{2} + \frac{a_p(l)^2\sqrt{1 - 4a_p(l)^{-2}p^{l - 1}} - p^r}{2} - p^{l - 1}.
        \end{eqnarray*}
        Therefore
        \begin{eqnarray*}
          \frac{y(l)^2 - p^r}{l - k}
          %& = & \frac{a_p(l)^2 - p^r}{2(l - k)} + \frac{a_p(l)^2\sqrt{1 - 4a_p(l)^{-2}p^{l - 1}} - p^r}{2(l - k)} - \frac{p^{l - 1}}{l - k} \\
            & = & \frac{a_p(l)^2 - p^r}{2(l - k)} + \frac{a_p(l)^2 - p^r}{2(l - k)}\sqrt{1 - 4a_p(l)^{-2}p^{l - 1}} + p^r\frac{\sqrt{1 - 4a_p(l)^{-2}p^{l - 1}} - 1}{2(l - k)} - \frac{p^{l - 1}}{l - k}.
        \end{eqnarray*}
        
By the definition of the derivative, the first and the second summands in the equation above converge to $a_p(k)a_p'(k)$ as $l \to k$. The last summand clearly converges to $0$. The third summand also converges to $0$. Indeed, 
as $l \to k$, the valuation of $a_p(l)$ becomes $r/2$. Therefore for such $l \geq k$ we can write
        \[
            \sqrt{1 - 4a_p(l)^{-2}p^{l - 1}} = \sum_{i = 0}^{\infty} {1/2 \choose i}(-4a_p(l)^{-2}p^{l - 1})^i.
        \]
        Using the fact that $\frac{p^{l - 1}}{l - k}$ converges to $0$ as $l \to k$, we see that 
        \[
            \lim_{l \to k} \frac{\sqrt{1 - 4a_p(l)^{-2}p^{l - 1}} - 1}{2(l - k)} = 0.
        \]

Putting everything together, we see that
        \[
            \lim_{l \to k} \frac{y(l)^2 - p^r}{l - k} = 2a_p(k)a_p'(k).  \qedhere
        \]
    \end{proof}
    
    \begin{lemma}\label{Derivative of (1 + p)^{l - 1}}
        We have
        \[
            \lim_{l \to k} \frac{(1 + p)^{l - 1} - (1 + p)^{k - 1}}{l - k} = (1 + p)^{k - 1}\log(1 + p).
        \]
    \end{lemma}
    \begin{proof}
      This is the same as Lemma~\ref{rate of the fourth coordinate} in the case $\sL \neq \infty$,
      so we give a slightly different proof. Write
        \begin{eqnarray*}
            \frac{(1 + p)^{l - 1} - (1 + p)^{k - 1}}{l - k} & = & (1 + p)^{k - 1}\frac{[(1 + p)^{l - k} - 1]}{l - k} \\
            & = & (1 + p)^{k - 1}\left[\sum_{i = 1}^{\infty} \frac{1}{i}{l - k - 1 \choose i - 1}p^i\right].
        \end{eqnarray*}
Since ${- 1 \choose i - 1} = (-1)^{i-1}$, taking the limit as $l \to k$ we obtain
        \[
            \lim_{l \to k} \frac{(1 + p)^{l - 1} - (1 + p)^{k - 1}}{l - k} = (1 + p)^{k - 1}\log(1 + p). \qedhere
        \]
    \end{proof}
    
    Now rewrite the point \eqref{points for arbitrary a_p} in the blow-up as
    \[
        \left(y(l)^2, (1 + p)^{l - 1} - 1, \> \frac{y(l)^2 - p^r}{l - k} : \frac{(1 + p)^{l - 1} - (1 + p)^{k - 1}}{l - k}\right).
    \]
    Using Lemma \ref{Derivative of y(l)^2} and Lemma \ref{Derivative of (1 + p)^{l - 1}}, we see  that as $l \to k$, the
    points above converge to
    \[
        (p^r, (1 + p)^{k - 1} - 1, \> 2a_p(k)a_p'(k) : (1 + p)^{k - 1}\log(1 + p)).
    \]
    By Theorem \ref{The L invariant}, we see that the $\sL$-invariant of the $(\varphi, \Gamma)$-module associated to this limit point is $-2a_p(k)^{-1}a_p'(k)$. Working as at the end of Section~\ref{proof of main theorem}, we see
    that the corresponding Galois representation is $V(\mu_{p^{r/2}}, \mu_{1/p^{r/2}}\chi^{1 - k}, -\sL)$ with $\sL = 2a_p(k)^{-1}a_p'(k)$, so the crystalline representations $V_{l, a_p(l)}^*$ converge to the semi-stable representation
$V_{k, \sL}^*$ with $\sL = 2a_p(k)^{-1}a_p'(k)$.

\begin{Remark}
  \label{Smoothening the rank one subobject}
    We thank one of the referees for pointing out the following simplification to the computation of the limits in Lemmas~\ref{rate of the third coordinate} and \ref{Derivative of y(l)^2}.
    Assume as above that $a_p : \zp \to E$ is a differentiable function of $l$ with $a_p(k) = p^{r/2}$ with $r = k - 2$ and $k \geq 3$. If we replace the
    crystalline representations $V_{l, a_p(l)}^*$ by $V_{l, a_p(l) + p^{l - 1}/a_p(l)}^*$, then the limit computation in Lemma \ref{Derivative of y(l)^2} becomes easier.
    Indeed, the $y(l)$ for the crystalline representation $V_{l, a_p(l) + p^{l - 1}/a_p(l)}^*$ is equal to $a_p(l)$.
    Therefore, for $l$ close to $k$ and $l \equiv k \mod (p - 1)$, the point in $\til{\sT}$ associated to $V_{l, a_p(l) + p^{l - 1}/a_p(l)}^*$ belongs to $\tU$
    and is given by
    \[
        (a_p(l)^2, (1 + p)^{l - 1} - 1, a_p(l)^2 - p^r : (1 + p)^{l - 1} - (1 + p)^{k - 1}).
    \]
    To evaluate the limit of these points in the third coordinate, we have to evaluate the limit 
    \[
        \lim_{\substack{l \to k \\ l \equiv k \!\!\!\!\mod p - 1}}\frac{a_p(l)^2 - p^{r}}{l - k}.
    \]
    But, this limit is just the derivative of $a_p(l)^2$ at $l = k$ and so is immediately equal to  $2a_p(k)a_p'(k)$.
    Similarly, for the limits in Lemma~\ref{rate of the third coordinate}. In principle, since the limit of the above sequence
    is the same as that of the original sequence, the crystalline representations $V_{l, a_p(l) + p^{l - 1}/a_p(l)}$ can also be used to compute
    the reduction of the semi-stable representation $V_{k,\sL}$. Note that $\br{V}_{l, a_p(l) + p^{l - 1}/a_p(l)} \simeq \br{V}_{l, a_p(l)}$
    for $l$ close to $k$ by \cite[Theorem A]{Ber}.

    Also, Lemmas~\ref{rate of the fourth coordinate} and \ref{Derivative of (1 + p)^{l - 1}}
    follow immediately by differentiating the function $f:\zp \to \qp$ defined by $f(x) = (1+p)^{x-1}$ at $x = k$.

\end{Remark}

\begin{Remark}[Relation to work of Greenberg-Stevens,...]
  \label{Different G-S}
%    \begin{itemize}
%    \item
  %(Relation to work of Greenberg-Stevens, $\ldots$) 
  Our formula \eqref{log-derivative-intro} is slightly different from the classical formula due to Greenberg, Stevens, Colmez, Bertolini, Darmon, Iovita, Benois and others (see \cite{GS, Ste, Col10, BDI, Ben10},...%and unpublished work of Kato-Kurihara-Tsuji
          ).
        We thank D. Benois for pointing this out.
          
        Let us explain the difference.
        In the classical setting, one starts with a newform $f$ of weight $k \geq 2$ for the subgroup $\Gamma_0(Np)$, where $(N, p) = 1$ (for simplicity, we assume that the nebentypus at $N$ is trivial) with $U_p$-eigenvalue $\alpha_p(f) = p^{\frac{k - 2}{2}}$ (as opposed to $-p^{\frac{k - 2}{2}}$ for simplicity). Then one takes the Hida family $F$ if $f$ is ordinary (equivalently k = 2), or the Coleman family $F$ if $f$ has positive slope (equivalently $k > 2$), passing through $f$ with $U_p$-eigenvalue $\alpha_p$, which is an analytic function of the weight. Note that $\alpha_p(k) = \alpha_p(f)$.
        The classical formula states that the $\mathrm{\sL}$-invariant of the form $f$ is
        \[
            \sL(f) = -2\frac{\alpha_p'(k)}{\alpha_p(k)}.
        \]
        Incidentally, the above authors use various definitions of the $\sL$-invariant of the form $f$ but these are all equal (see \cite{Col05} for a survey comparing these alternative definitions).
        
        For $l \equiv k \mod (p - 1)$, the nebentypus of the weight $l$ member $F_l$ of the family $F$ is trivial.
        Since forms living in a Hida or Coleman family have the same slope and since the slope of the $U_p$-eigenvalue of a $\Gamma_0(Np)$-newform of weight $l$ is equal to $(l - 2)/2$, we see that $f = F_k$ is the unique classical member in the family $F$ of weight $l \equiv k \mod (p - 1)$ that is $p$-new. In fact, any classical member $F_l$ with $l \equiv k \mod (p - 1)$ and $l \neq k$ arises as a $p$-stabilization of a form $\til{F}_l$ that is only $N$-new. The $U_p$-eigenvalue $\alpha_p(l)$ of $F_l$ is a root of
        \[
            x^2 - a_p(l)x + p^{l - 1},
        \]
        where $a_p(l)$ is the $T_p$-eigenvalue of the form $\til{F}_l$. The local Galois representation $$\rho_{F_l}\vert_{\mathrm{Gal}(\brqp/\qp)} \simeq \rho_{\til{F}_l}\vert_{\mathrm{Gal}(\brqp/\qp)}$$ is isomorphic to the crystalline representation $V_{l, a_p(l)}$. Moreover, $\rho_{f}\vert_{\mathrm{Gal}(\brqp/\qp)}$ is isomorphic to the semi-stable representation $V_{k, \sL}$ with $\sL = -\sL(f)$. The change in sign is because the $\sL$-invariant in the filtration on $D_{k, \sL}$ given in the Introduction
        is the negative of the one in the filtration on the filtered module in \cite{Maz}. Therefore in the classical setup, %we see that the crystalline representations $V_{l, a_p(l)}$ converge to the semi-stable representation $V_{k, \sL}$ with
        \[
            \sL = -\sL(f) = 2\frac{\alpha_p'(k)}{\alpha_p(k)}.
          \]
          
        In our setup, we have a smooth function $a_p : \zp \to E$ such that $a_p(k) = p^{\frac{k - 2}{2}}$. We prove that the sequence of crystalline representations $V_{l, a_p(l)}^*$ converge to the semi-stable representation $V_{k, \sL}^*$ with
        \[
            \sL = 2\frac{a_p'(k)}{a_p(k)}.
        \]

        Note that there does not seem to be a common framework within which one may compare these two formulas. Since the
        functions $a_p(l)$ and $\alpha_p(l)$ associated to the Hida or Coleman families above are related by the formula
        \[
            a_p(l) = \alpha_p(l) + \frac{p^{l - 1}}{\alpha_p(l)},
        \]
        the $a_p(l)$ above cannot be interpolated by a smooth function $a_p : \zp \to E$. Indeed, $p^{l - 1}$ is not even defined on the whole of $\zp$. %However, see the discussion below.

     % \item
        However, we can obtain the classical formula above using the trick in 
        Remark~\ref{Smoothening the rank one subobject}. % We thank one of the examiners of the
        % second author's thesis for a comment which led to the proof.
        Indeed, suppose $\alpha_p : \zp \to E$ is a differentiable function of $l$ with $\alpha_p(k) = p^{\frac{k - 2}{2}}$ with $k \geq 3$. Consider the crystalline representations $V_{l, \alpha_p(l) + \frac{p^{l - 1}}{\alpha_p(l)}}^*$ for integer $l \equiv k \mod (p - 1)$ with $l \neq k$. The $y(l)$ for these representations is $\alpha_p(l)$. Therefore, for $l$ close to $k$ and $l \equiv k \mod (p - 1)$, the point in $\til{\sT}$ associated to $V_{l, \alpha_p(l) + \frac{p^{l - 1}}{\alpha_p(l)}}^*$ belongs to $\tU$ and is given by
    \[
        (\alpha_p(l)^2, (1 + p)^{l - 1} - 1, \> \alpha_p(l)^2 - p^r : (1 + p)^{l - 1} - (1 + p)^{k - 1}).
    \]
    %To evaluate the limit of these points in the third coordinate, we have to evaluate the limit 
    %\[
    %    \lim_{\substack{l \to k \\ l \equiv k \!\!\!\!\mod p - 1}}\frac{\alpha_p(l)^2 - p^{r}}{l - k}.
    %\]
    %But, this limit is just the derivative of $\alpha_p(l)^2$ at $l = k$ and so is immediately equal to  $2\alpha_p(k)\alpha_p'(k)$. The limit of the fourth coordinate is given by Lemma~\ref{Derivative of (1 + p)^{l - 1}}. These computations show that the points
    %\[
    %    (\alpha_p(l)^2, (1 + p)^{l - 1} - 1, \> \alpha_p(l)^2 - p^r : (1 + p)^{l - 1} - (1 + p)^{k - 1}) 
    %\]
    %converge to
    Arguing as in Remark~\ref{Smoothening the rank one subobject} (with $a_p(l)$ there replaced by $\alpha_p(l)$),
    we see that  as $l$ tends to $k$ the limit of the above sequence of points is 
    \[
        (p^r, (1 + p)^{k - 1} - 1, \>2\alpha_p(k)\alpha_p'(k) : (1 + p)^{k - 1}\log(1 + p)).
    \]
    Using Theorem \ref{The L invariant}, we see that the $\sL$-invariant of the $(\varphi, \Gamma)$-module associated to the limit point is $-\sL$ with $$\sL = 2\frac{\alpha_p'(k)}{\alpha_p(k)}.$$ In the notation of \cite{Col}, the corresponding Galois representation is $V(\mu_{p^{r/2}}, \mu_{1/p^{r/2}}, -\sL)$, which is isomorphic to $V_{k, \sL}^*$ as explained at the end of Section~\ref{proof of main theorem}.
    %Similarly, for the limits in Lemma~\ref{rate of the third coordinate}. %In principle, since the limit of the above sequence
    %is the same as that of the original sequence, the crystalline representations $V_{l, a_p(l) + p^{l - 1}/a_p(l)}$ can also be used to compute
    %the reduction of the semi-stable representation $V_{k,\sL}$. Note that $\br{V}_{l, a_p(l) + p^{l - 1}/a_p(l)} \simeq \br{V}_{l, a_p(l)}$
    %for $l$ close to $k$ by \cite[Theorem A]{Ber}.

    %\item These authors use various alternative definitions of the $\sL$-invariant while proving \eqref{log-derivative-intro}.
%These are all equal (see \cite{Col05} for a survey comparing these various alternative definitions) and
%are equal to $-\sL$  because the $\sL$-invariant in the filtration on $D_{k, \sL}$ given in the Introduction
%is the negative of the one in the filtration on the filtered module in \cite{Maz}.
%that is,
%they are all equal to the negative of the $\sL$-invariant appearing in the filtration on $D_{k, \sL}$ (see \cite{Maz}).
%Thus there is no negative sign in the formula above.
%    \end{itemize}
    \end{Remark}

\section{Proof of Theorem \ref{thm from conj}}
  \label{harmonic sums are here}

  In this section, we use Theorem \ref{any l invariant} along with local constancy for the reduction %Chenevier's local constancy result
  and the zig-zag conjecture to compute the
  reductions of semi-stable representations $V_{k,\sL}$ with weights $k$ in the range $[3, p + 1]$ for $p$ odd.

  Consider the crystalline representation $V_{k_n, a_n}$, with $(k_n, a_n)$ as in \eqref{kn an}. Since
  $v_p(a_n) % = v_p(p^{r/2} + p^{n + r/2}\sL(p - 1)/2)
  =  \frac{r}{2}$ for $r = k-2$,
  for $n$ sufficiently large, the weight $k_n$ %= k + p^n(p - 1)$
  is an {\it exceptional} weight with respect to the half-integral slope
  $0 < v = \frac{r}{2} \leq \frac{p-1}{2}$
  in the sense of \cite[Section 1]{Gha}, namely $k_n \equiv 2v+2 \mod (p-1)$, for $n$ large.
  Therefore, we can apply Conjecture~\ref{zig-zag-conj}
  %Therefore, we can apply\footnote{The conjecture excludes some weights that are $p$-adically close to some small weights,
  %   but these small weights are always strictly larger than $p+1$, so do not interfere with the argument here.} the zig-zag conjecture
  %\cite[Conjecture 1.1]{Gha}
  to compute the reduction of $V_{k_n, a_n}$ for large $n$. The zig-zag conjecture specifies the reduction in terms of an alternating sequence of
  irreducible and reducible mod $p$ representations depending on the relative size of a rational parameter
  $\tau$ with respect to (certain integer shifts of) another integer parameter $t$.
  % The term $\tau$ involves a term $\tau$ that is the valuation of a rational function of the weight and the trace of Frobenius.

  For $V_{k_n, a_n}$, the formula for $\tau$ (cf. \cite[(1.1)]{Gha}), which we call $\tau_n$, is
\[
    \tau_n = v_p\left(\dfrac{a_n^2 - {k_n - 2 - v_{-} \choose v_{+}}{k_n - 2 - v_{+}\choose v_{-}}p^r}{pa_n}\right),
\]
where $v_{-}$ and $v_{+}$ are the largest and the smallest integers, respectively, such that $v_{-} < v < v_{+}$.
% such that the $p$-adic valuation $v$ of $a_n$ belongs to $(v_{-}, v_{+})$ and $0 < b \leq p - 1$ represents the congruence class of $\kappa - 2$ modulo $p-1$.
% For each $n$, we compute the quantity $\tau$, which we call $\tau_n$, associated with the crystalline representation $V_{k + p^n(p - 1), \, p^{r/2} + p^{n + r/2}\sL(p - 1)/2}$.
% For $3 \leq k \leq p + 1$, we see that $b = r$.
Let us simplify this. We first treat the case $\sL \neq \infty$.  We have
\[
    \tau_n = v_{p}\left(\dfrac{p^{r}(1 + p^{n}(p - 1)\sL/2)^2 - {k + p^n(p - 1) - 2 - v_{-} \choose v_{+}}{k + p^n(p - 1) - 2 - v_{+} \choose v_{-}}p^r}{p^{1+r/2}(1 + p^{n}(p - 1)\sL/2)}\right).
\]
For large $n$, we have $v_{-} + v_{+} = k - 2$, so
    \begin{eqnarray}
    {k + p^n(p - 1) - 2 - v_{-} \choose v_{+}} & = & \frac{(1 + p^n(p - 1))(2 + p^n(p - 1))\cdots(v_{+} + p^n(p - 1))}{v_{+}!} \nonumber\\
                                              & = & 1 + p^n(p - 1) H_+ + \text{ terms involving }p^{2n}, \nonumber 
                                           %  & = & 1 + p^n(p - 1)\sum_{i = 1}^{v_{+}}\frac{1}{i} + \text{ terms involving }p^{2n}. \nonumber       
    \end{eqnarray}
for the harmonic sum\footnote{Thus, the provenance of the harmonic sums
  that are prevalent in all the computation of the reductions of semi-stable representations in the literature can now be traced back to the $p$-adic
  expansions of the
  binomial coefficients appearing in the zig-zag conjecture.} $H_{+} = \sum_{i = 1}^{v_{+}}\frac{1}{i}$.   Similarly,
\[
    {k + p^n(p - 1) - 2 - v_{+} \choose v_{-}} = 1 + p^n(p - 1) H_{-}+ \text{ terms involving }p^{2n},
  \]
  for $H_{-} = \sum_{i = 1}^{v_{-}}\frac{1}{i}$ (if $v_{-} \geq 1$; $H_{-} = 0$ if $v_{-} = 0$). 
%Therefore,
%\[
%    \tau_n = v_p\left(\dfrac{p^r(1 + \sL p^n(p - 1)/2)^2 - p^r[1 + p^n(p - 1)(\sum_{i = 1}^{v_{-}}\frac{1}{i} + \sum_{i = 1}^{v_{+}}\frac{1}{i}) + \text{ terms involving }p^{2n}]}{p(p^{r/2} + p^{n + r/2}\sL(p - 1)/2)}\right).
%\]
%Recall that $H_{+} = \sum_{i = 1}^{v_{+}}\frac{1}{i}$ and $H_{-} = \sum_{i = 1}^{v_{-}}\frac{1}{i}$ or $0$ if $v_{-} \not = 0$ or $v_{-} = 0$, respectively.
Substituting, we get
\begin{eqnarray}
    \tau_n % & = & v_p\left(\dfrac{p^r(1 + \sL p^n(p - 1)/2)^2 - [1 + p^n(p - 1)(H_{-} + H_{+}) + \text{ terms involving }p^{2n}]p^r}{p(p^{r/2} + p^{n + r/2}\sL(p - 1)/2)}\right) \nonumber \\
    & = & r + v_p((1 + \sL p^n(p - 1)/2)^2 - [1 + p^n(p - 1)(H_{-} + H_{+}) + \text{ terms involving }p^{2n}]) \nonumber \\
    & & \qquad -v_p(p^{1 + r/2}(1 + \sL p^n(p - 1)/2)) \nonumber \\
    & = & r + v_p(\sL p^n(p - 1) - (H_{-} + H_{+})p^n(p - 1) + \text{terms involving }p^{2n})  \nonumber \\
    & & \qquad - (1 + r/2) - v_p(1 + \sL p^n(p - 1)/2). \nonumber
\end{eqnarray}
Thus, for large $n$,
\[
    \tau_n = r/2 - 1 + n + v_p(\sL - H_{-} - H_{+}),
  \]
  if $\sL \neq H_{-} + H_{+}$ (and $\tau_n \geq r/2-1 + 2n + c$ for some $c \in {\mathbb Q}$ independent of $n$
  if  $\sL = H_{-} + H_{+}$).

As mentioned above, the zig-zag conjecture  involves another parameter $t$ which for $V_{k_n,a_n}$ we call $t_n$. We have %$t_n = v_p(k_n - 2 - r)$.
% Now for each $n$, we compute this quantity for the crystalline representation $V_{k + p^n(p - 1), \, p^{r/2} + p^{n + r/2}\sL(p - 1)/2}$ and call it $t_n$.
%Since $r = k - 2$, we see that
\[
    t_n = v_p(k_n - 2 - r) = n.
  \]
  
  These formulas for $\tau_n$ and $t_n$ show that for large $n$ the parameter $\tau_n - t_n$ lies, independently of $n$, in 
  % the crystalline representations $V_{k_n, a_n}$ belong
  one of the intervals (which may possibly be a point) that appear in
  the statement of \cite[Conjecture 1.1]{Gha}. This interval is determined by the size of $\nu = v_p(\sL - H_{{-}} - H_{{+}})$
  (and is the rightmost one when $\nu = \infty$).
  The conjecture accordingly specifies the exact shape of the reductions of the crystalline representations $V_{k_n, a_n}$  for large $n$ in terms of the size of $\nu$.
  Since taking duals commutes with taking reduction, we obtain the reductions of the $V^*_{k_n, a_n}$ for large $n$
  in terms of the size of $\nu$. 
  Using
  Theorem~\ref{any l invariant} and %Chenevier's
  local constancy for the reduction,
  %result,
  we get the reduction of the limiting semi-stable
  representation $V^*_{k,\sL}$ in terms of the size of $\nu$.
  Finally, taking duals again, we obtain Theorem \ref{thm from conj} for $\sL$ finite.

  % As remarked after Theorem \ref{thm from conj}, the reduction of $V_{k, \sL}$ for $\sL = \infty$ and $k \in [2, p + 1]$ is already known using \cite{Edi}.
  Now assume we are in the case $\sL = \infty$. Then a computation similar to the one above shows that $\tau_n = r/2-1+n$ and $t_n = n^2$, so that
  for large $n$, we have $\tau_n < t_n$ (and so $\tau_n - t_n$ is in the leftmost interval appearing in the conjecture).
  Using \cite[Conjecture 1.1]{Gha} and Theorem~\ref{any l invariant} again, we see that
  $\br{V}_{k, \infty} \sim \mathrm{ind}(\omega_2^{k - 1})$, at least on the inertia subgroup $I_{\qp}$.     
  %For the sake of completeness, we compute the reduction of $V_{k, \sL}$ for $\sL = \infty$ and $k \in [3, p + 1]$ using the techniques above. Since the sequence of (the duals of) $V_{k, p^{r/2} + p^{n + r/2}}$ converges to (the dual of) $V_{k, \sL}$ for $\sL = \infty$, we see that $\br{V}_{k, \infty} \sim \br{V}_{k, p^{r/2} + p^{n + r/2}}$ for large $n$. Using \cite{Edi}, we have $\br{V}_{k, p^{r/2} + p^{n + r/2}} \sim \mathrm{ind}(\omega_2^{k - 1})$. Therefore $\br{V}_{k, \infty} \sim \mathrm{ind}(\omega_2^{k - 1})$.
  Thus Theorem~\ref{thm from conj} also holds for $\sL = \infty$ as $\nu = -\infty$ (but as remarked after the theorem,
  the result in this case is classical by \cite{Edi} and even holds without assuming zig-zag!). 

% We compute reduction of $V_{k, \infty}$ using the sequence $V_{k, p^{r/2} + p^{n + r/2}}$ since by Theorem $\ref{any l invariant}$, this sequence converges to $V_{k, \mathscr{\infty}}$. As $k \in [2, p + 1]$, we see using Fontaine and Edixhoven \cite{Edi} that $\br{V}_{k, p^{r/2} + p^{n + r/2}} \cong \mathrm{ind}(\omega_2^{k - 1})$. Therefore $\br{V}_{k, \infty} \sim \mathrm{ind}(\omega_2^{k - 1})$.

\section{Bounded Hodge-Tate weights}
 \label{section further implication}

% In the last paragraph of the previous section, we saw an example of a sequence of crystalline representations of weight $k$ converging to a crystalline representation of weight $k$.
 In this section, we use the techniques of this paper to give a proof of the fact that the limit of a sequence of two-dimensional (irreducible) crystalline representations $V_n$ for $n \geq 1$ with Hodge-Tate weights in an interval $[a, b]$ such that the difference of the Hodge-Tate weights is at least two infinitely often is also (irreducible) crystalline with Hodge-Tate weights in the interval $[a, b]$.
However, note that Berger \cite[Th\'eor\`eme 1]{Ber04} has proved more generally 
 that the limit of subquotients of crystalline representations with bounded Hodge-Tate weights belonging to $[a, b]$ is also crystalline with Hodge-Tate weights in $[a, b]$.
 %This gives a different proof of a special case of a more general theorem of Berger (cf. \cite{Ber04}).

%Let $V_n$ for $n \geq 1$ denote the $n^{\mathrm{th}}$ term of the sequence.
%By twisting
 Twisting by a fixed power of the cyclotomic character, we may assume that infinitely often the Hodge-Tate weights of $V_n$ are $(0, k_n-1)$ with $k_n \geq 3$.
 This sequence of integers $k_n$ is not to be confused with the one defined in \eqref{kn an}.
Then %up to unramified twists, the sequence of crystalline representations is 
there exists a sequence of unramified characters $\mu_n$ for $n \geq 1$ such that $V_n  \simeq V_{k_n, a_{n}} \otimes \mu_n$,
for some $a_n$ with  
$v_p(a_n) > 0$.
%for $n \geq 1$. 
For each $n \geq 1$, consider the ordered pair of characters $(\delta_{1, n}, \delta_{2, n})$ associated with %$V^*_{k_n, a_{n}}$
$V_n^*$ under triangulation. %{\color{blue} using \cite[Proposition $3.1$]{Ber}}. %Up to unramified twists, 
We have $\delta_{1, n} = \mu_{y_n}\cdot\mu_n^{-1}$ and $\delta_{2, n} = \mu_{1/y_n}\chi^{1 - k_n}\cdot\mu_n^{-1}$, where $y_n$ is as in \eqref{yn}. % = \frac{a_{n} + \sqrt{a_{n}^2 - 4p^{k_n - 1}}}{2}$.
The assumption that the sequence $V_n$ converges means that the associated sequence of points in $\til{\sT}_2$ converge to a point in $\til{\sT}_2$. Say that the sequence $(\delta_{1, n}, \delta_{2, n})$ converges to $(\delta_1, \delta_2)$ in $(\sT\times\sT) \setminus F'$.

First assume that $\delta_1\delta_2^{-1}$ is not of the form $x^r\chi$ for $r \geq 0$. Then, by convention, the $\sL$-invariant associated to the limit point is $\infty$.
%Assume $\delta_1\delta_2^{-1}$ is not of the form $x^{-i}$ or $x^i\chi$ for $i \geq 0$. 
Using the convergence of (the unramified parts of) $\delta_{1, n}\delta_{2, n}$ and $\delta_{1, n}\delta_{2, n}^{-1}$, we see that $\mu_n^{-2}$ converges to $\mu_{\alpha^2}$ for some $\alpha \in \brqp^*$ and $\mu_{y_n^2}$ converges to $\mu_{y^2}$ for some $y \in \brqp^*$. Therefore $\delta_{1, n} = \mu_{y_n}\cdot\mu_n^{-1}$ converges to $\delta_1 = \mu_{\pm \alpha y}$. Similarly, $\delta_{2, n}$ converges to $\delta_2 = \mu_{\pm \alpha/y} \chi^{1 - k}$ for some $k$. So the sequence $V_n^*$ converges to $V(\mu_{\pm \alpha y}, \mu_{\pm \alpha / y}\chi^{1 - k}, \infty)$, for some $k$, in the notation of \cite{Col}. Now $k_n$ is a bounded sequence of integers converging to $k$. Therefore $k_n = k$ for large $n$. In particular, $k \geq 3$.
%(in this part of the proof we only need $k \geq 2$, so that the hypothesis that the difference of the Hodge-Tate weights is at least two can be weakened to the hypothesis that the Hodge-Tate weights being regular). Since $v_p(y_n)$ belongs to $(0, k_n - 1)$, we see that $v(y) < k - 1$.  
But $V(\mu_{\pm\alpha y}, \mu_{\pm\alpha/y}\chi^{1 - k}, \infty)$ is equal to the crystalline representation $V_{k, y + p^{k -1}/y}^* \otimes \mu_{\pm\alpha}$.
Taking duals, we see that the sequence $V_n$ converges to the crystalline representation $V_{k, y + p^{k - 1}/y} \otimes \mu_{\pm\alpha}^{-1}$. Untwisting by the fixed power of the cyclotomic character, the original sequence $V_n$ also converges to a crystalline representation with Hodge-Tate weights in $[a, b]$.
    
%We show that $\delta_1\delta_2^{-1}$ cannot be of the form $x^{-i}$ for $i \geq 0$. Indeed, $\delta_1\delta_2^{-1}$ is a limit of $\delta_{1, n}\delta_{2, n}^{-1} = \mu_{y_n^2}\chi^{k_n - 1}$. Therefore $(\delta_1\delta_2)(p)$ is of positive valuation. However the characters $x^{-i}$ for $i \geq 0$ send $p$ to elements of valuation less than or equal to $0$.

Now assume that $\delta_1\delta_2^{-1} = x^r\chi$, for some $r \geq 0$. This means that $\delta_{1, n}\delta_{2, n}^{-1} = \mu_{y_n^2}\chi^{k_n - 1}$ converges to $x^r\chi$ in the neighborhood $\U$ of $x^r\chi$ in $\sT$. Let $k = r+2$.
Since $k_n$ is a bounded sequence of integers, the convergence implies that
$k_n = k$ for all but finitely many $n$. In particular, we have $k \geq 3$.
%If $\delta_{1, n}\delta_{2, n}^{-1} = x^r\chi$ for infinitely many $n$,
%then working with a subsequence, we see that $V_n^*$ converges to $V(\delta_1, \delta_2, \infty)$ in the notation of \cite{Col}.
%Otherwise for large $n$, we have $y_n^2 \neq p^r$ and
The point corresponding to the character  $\mu_{y_n^2}\chi^{k - 1}$ in the blow-up $\tU$ of $\U$ is 
%\[
%    (y_n^2,\> (1 + p)^{k_n - 1} - 1, \> y_n^2 - p^r : (1 + p)^{k_n - 1} - (1 + p)^{k - 1}),
%\]
%where $k = r + 2$.
%Since $k_n$ is a bounded sequence of integers, the convergence of the (second coordinate of the) sequence above implies that $k_n = k$ for all but finitely many $n \in \mathbb{N}$. In particular, $k \geq 3$.
  \[
    (y_n^2,\> (1 + p)^{k - 1} - 1, \> y_n^2 - p^r : 0) = (y_n^2,\> (1 + p)^{k - 1} - 1, \> 1 : 0),
  \]
at least if $y_n^2 \neq p^r$ (if $y_n^2 = p^r$, then it is also the last point since this is the only crystalline point in the fiber above $x^r\chi$). 
 Since $y_n^2$ converges to $p^r$, this sequence converges to the point $(p^r,\> (1 + p)^{k - 1} - 1, 1 : 0)$ in $\tU$. %$k \geq 3$.
  % \par Using Theorem \ref{fiber and tangents}, we see that this point corresponds to the tangent direction represented by $v_{\infty} \in \mathrm{Hom}(m/m^2, \qp)$ given by $v_{\infty}(\br{f_1}) = 1$ and $v_{\infty}(\br{f_2}) = 0$.
 By Theorem \ref{The L invariant}, we see that the $\sL$-invariant associated with this limit point is $\infty$. So the limit of the sequence $V_n^*$ is
 $V(\delta_1, \delta_2, \infty)$ in the notation of \cite{Col}. 
  
  Now $\delta_1\delta_2^{-1} = x^r\chi = \mu_{p^{r/2}}\cdot (\mu_{1/p^{r/2}}\chi^{1 - k})^{-1}$ implies that there is a character $\delta$ such that $\delta_1 = \delta \cdot \mu_{p^{r/2}}$ and $\delta_2 = \delta \cdot \mu_{1/p^{r/2}}\chi^{1 - k}$. 
  %A small check shows that
  Using \cite[Proposition 4.4 (i)]{Col}, we see that $(\delta_1\delta_2)(p)$ is a unit. Hence $\delta(p)$ is a unit. Consequently, $V(\delta_1, \delta_2, \infty) \simeq V(\mu_{p^{r/2}}, \mu_{1/p^{r/2}}\chi^{1 - k}, \infty) \otimes \delta$. Therefore the limit of the crystalline representations $V^*_n$ is the crystalline representation $V_{k,\infty}^* \otimes \delta$. Taking duals, we see that the limit of the $V_n$ is $V_{k,\infty} \otimes \delta^{-1}$. Note that the $\delta_{1, n} = \mu_{y_n}\cdot \mu_n^{-1}$ are unramified characters converging to $\delta_1 = \delta \cdot \mu_{p^{r/2}}$. This forces $\delta$ to be unramified and hence crystalline. Therefore $V_{k, \infty} \otimes \delta^{-1}$ is crystalline. The last representation has Hodge-Tate weights $(0, k - 1)$. 
Untwisting by the fixed power of the cyclotomic character, we see that the Hodge-Tate weights of the limit representation of the original sequence again lie in $[a, b]$.

\vspace{.3cm}

\noindent {\bf Acknowledgements:} We would like to thank D. Benois, J. Bergdall, L.
Berger, C. Breuil, P. Colmez, G. Chenevier, H. Schoutens and C. Park for
useful conversations and the anonymous referees for numerous remarks which
greatly helped improve the paper. We would also like to thank the
participants
of the rigid analytic geometry seminar held at TIFR in 2019.
The first and second authors acknowledge support of the Department of Atomic Energy under project number 
12-R\&D-TFR-5.01-0500. During this work, the third author was partially supported by JSPS KAKENHI Grant Numbers
JP15H03610, JP21H00969, JP23K20782.

\end{document}